\newcommand{\addtitle}{
 \maketitle
}
\newcommand{\addtoc}{
 \tableofcontents
}
\newcommand{\mybibname}{References}
\newcommand{\addbib}{\printbibliography[heading=mybibheading]}
\newcommand{\mynewtheorem}[3]{
 \newaliascnt{my#1cnt}{#2}
 \expandafter\def\csname my#1cntautorefname\endcsname{#1}
 \newtheorem{#1}[my#1cnt]{#3}
}
\newaliascnt{myenvcnt}{equation}
\theoremstyle{plain}
\newtheorem*{theorem*}{Theorem}
\theoremstyle{definition}
\Crefname{subsection}{Subsection}{Subsections}
\newcommand{\defoperator}[1]{\expandafter\def\csname#1\endcsname{\operatorname{#1}}}
\newcommand{\deffrak}[1]{\expandafter\def\csname#1\endcsname{\mathfrak{#1}}}
\newcommand{\defbb}[1]{\expandafter\def\csname#1#1\endcsname{\mathbb{#1}}}
\newcommand{\Roots}{R}
\newcommand{\Dir}{\operatorname{\slashed{D}}}
\newcommand{\Sb}{\slashed{S}}
\newcommand{\A}{\mathscr{A}}
\newcommand{\G}{\mathscr{G}}
\newcommand{\GLie}{\mathfrak{G}}
\newcommand{\M}{\mathcal{M}}
\newcommand{\C}{\mathscr{C}}
\newcommand{\D}{\operatorname{\mathnormal{D}}}
\newcommand{\tD}{\operatorname{\widetilde{\mathnormal{D}}}}
\newcommand{\tDir}{\operatorname{\widetilde{\mathnormal{\Dir}}}}
\newcommand{\Psiop}{\operatorname{\Psi}}
\newcommand{\lspan}{\operatorname{span}}
\newcommand{\deft}{\operatorname{def}}
\newcommand{\Exterior}{{%
 \mathord{\mathchoice%
  {\vcenter{\hbox{\scaleobj{1}{\bigwedge}}}}%
  {\vcenter{\hbox{\scaleobj{1}{\bigwedge}}}}%
  {\vcenter{\hbox{\scaleobj{1}{\scriptstyle\bigwedge}}}}%
  {\vcenter{\hbox{\scaleobj{1}{\scriptscriptstyle\bigwedge}}}}}}}
\renewcommand{\d}{\mathord{\operatorname{d}}}
\newcommand{\dvol}{\mathord{\operatorname{dvol}}}
\newcommand{\hs}{\mathord{\star}}
\newcommand{\bdry}{\partial}
\newcommand{\vfs}{\mathcal{V}}
\newcommand{\ia}[1]{{#1}^\#}
\newcommand{\del}{\partial}
\newcommand{\pderiv}[2]{\frac{\partial #1}{\partial #2}}
\newcommand{\mass}{\mu}
\newcommand{\charge}{\kappa}
\newcommand{\mc}{{\mass,\charge}}
\newcommand{\aphi}{{(a,\phi)}}
\newcommand{\aphisub}[1]{{(a_{#1},\phi_{#1})}}
\newcommand{\APhi}{{(A,\Phi)}}
\newcommand{\APhisub}[1]{{(A_{#1},\Phi_{#1})}}
\newcommand{\Bog}{\mathcal{B}}
\newcommand{\E}{\mathcal{E}}
\newcommand{\gb}[1][\alpha]{{%
 \mathchoice%
 {\uwave{\g_{#1}}}%
 {\smash{\uwave{\g_{#1}}}}%
 {\smash{\uwave{\vphantom{a_{a_p}}\g_{#1}}}}%
 {\smash{\uwave{\vphantom{a_{a_p}}\g_{#1}}}}}}
\newcommand{\fMod}{\M}
\newcommand{\calH}{\mathcal{H}}
\newcommand{\scrH}{\mathscr{H}}
\newcommand{\scrB}{\mathscr{B}}
\newcommand{\ssslash}{/\!\!/\!\!/}
\newcommand{\mzero}{\setminus\{0\}}
\newcommand{\restr}[2]{#1\mathclose{\restriction_{#2}}}
\newcommand{\restrop}[2]{\mathop{{#1}\mathclose{\restriction_{#2}}}\nolimits}
\newcommand{\iso}{\cong}
\newcommand{\dmat}[1]{{\everymath={\displaystyle}\begin{pmatrix}#1\end{pmatrix}}}
\renewcommand{\phi}{\varphi}
\renewcommand{\epsilon}{\varepsilon}
\newcommand{\st}{\mid}
\newcommand{\subsetc}{\Subset}
\newcommand{\lb}[1]{\mathscr{L}^{#1}}
\newcommand{\ph}{{\mathord{\vcenter{\hbox{\scaleobj{0.6}{\bullet}}}}}}
\title[A hyper-Kähler metric on moduli spaces of monopoles]{A hyper-Kähler metric on the moduli spaces of monopoles with arbitrary symmetry breaking}
\author[Jaime Mendizabal]{Jaime Mendizabal \orcidlink{0009-0007-0194-5259}}
\address{Department of Mathematics, University College London}
\date{}
\begin{document}\label{page-1}

\begin{abstract}
We construct the hyper-Kähler moduli space of framed monopoles over \(\RR^3\) for any connected, simply connected, compact, semisimple Lie group and arbitrary mass and charge, and hence arbitrary symmetry breaking. In order to do so, we define a configuration space of pairs with appropriate asymptotic conditions and perform an infinite-dimensional quotient construction. We make use of the b and scattering calculuses to study the relevant differential operators.\\
\textit{Keywords}: Gauge theory, monopoles, moduli spaces, hyper-Kähler geometry
\end{abstract}

\addtitle

{\def\thefootnote{}\footnotetext{The Version of Record of this article is published in: \textit{Annals of Global Analysis and Geometry} 66 (2024), 4 (available online at: \url{https://doi.org/10.1007/s10455-024-09954-z})}}

\addtoc

\section{Introduction}

\subsection{Background and overview}\label{subsec-background-and-overview}

Monopoles over \(\RR^3\) with gauge group \(\SU(2)\) have been studied quite extensively. When a finite energy condition is imposed, their behaviour near infinity is determined, up to a certain order, by the charge, which is given by a single integer \cite{JT80}. If we fix a value of this charge, and further fix a framing for the monopoles of that charge, then we can form moduli spaces which are complete hyper-Kähler manifolds whose dimension is four times the charge \cite{AH88}. The metric is inherited from the \(L^2\) norm.

For more general gauge groups, however, the picture is more complicated. Here, the charge is no longer given by a single integer, and the mass takes on a more prominent role, determining the symmetry breaking. This complicates the analysis involved and new features arise which were not present in the case of \(\SU(2)\). Nonetheless, these monopoles and their moduli spaces have also been studied. Often, this has been through equivalences between them and other mathematical objects.

For example, Nahm's equations have been used to study \(\SU(n)\)-monopoles with maximal symmetry breaking \cite{Hur89,HM89,Bie98} and \(\SU(3)\)-monopoles with non-maximal symmetry breaking \cite{Dan92}. More recently, they have been used to produce some \(\SU(n)\)-monopoles with non-maximal symmetry breaking \cite{CDLNY22}, and a general picture has been established for the construction of monopoles with arbitrary symmetry breaking from this type of data \cite{CN22}.

Rational maps have also been used to study monopoles, whose moduli spaces, in the case of non-maximal symmetry breaking, can be organised into stratified spaces \cite{Mur89,Jar98a,Jar98b,Jar00,MS03}. To be more precise, the case of non-maximal symmetry breaking involves two different types of charges: magnetic and holomorphic. The magnetic charges provide discrete topological information about the asymptotics of the monopoles. However, the moduli space of monopoles with a given magnetic charge can be further broken down into strata corresponding to different holomorphic charges. These strata are in fact fibrations, where each fibre is the moduli space corresponding to a specific framing represented by a point in the base.

Our aim is to construct these moduli spaces without relying on any of the above equivalences, using the analytical framework developed by Kottke \cite{Kot15a}, which was already used by the same author to explore the case of \(\SU(2)\)-monopoles over other \(3\)-manifolds \cite{Kot15b}. This approach has the benefit of providing the structure of a hyper-Kähler manifold, and establishes some analytical tools with which to investigate further properties of the metric and the monopoles.

One of the main ideas of this framework is to treat different subbundles of the adjoint bundle separately. More specifically, at each fibre of the adjoint bundle we consider the Lie subalgebra which commutes with the mass term, and its orthogonal complement. At the level of subbundles, the adjoint action of the Higgs field will degenerate along the former but not the latter, causing the relevant differential operators to have different properties and hence require different tools: the b calculus and the scattering calculus, respectively.

Once the analysis is set up, the necessary infinite-dimensional quotient construction is similar to that of other studied problems, like the case of anti-self-dual Yang--Mills connections laid out in Donaldson and Kronheimer's book \cite{DK90}.

Similar techniques were employed by Sánchez Galán in his PhD thesis \cite{San19}. In it, a combination of the b and scattering calculuses is applied to the construction of the moduli spaces of \(SU(n)\)-monopoles with arbitrary symmetry breaking and their smooth and hyper-Kähler structures, and an index theorem from Kottke's work \cite{Kot15a} is applied to the computation of the dimension for maximal symmetry breaking.

Although many of the features are already present in the case of \(SU(n)\), our setting is a more general class of gauge groups. We similarly apply a combination of the b and scattering calculuses, mainly following Kottke's work, with slightly different definitions for the Sobolev spaces involved, particularly in the choice of parameters. More specifically, we completely fix the decay parameters and allow an arbitrarily large regularity parameter. The definitions of the configuration space and group of gauge transformations also differ in other aspects, like in our definition of framing -- a rigorous treatment of its relationship with other definitions in the literature is left for future work. We furthermore analyse the linearised operator and its indicial roots in detail for arbitrary symmetry breaking, allowing us to directly compute the dimension of the resulting moduli spaces as well as to better understand the decay parameters needed and the resulting asymptotic properties.

Note that, in the case of non-maximal symmetry breaking, we must fix the magnetic \emph{and} holomorphic charges in order to maintain the finiteness of the metric. Hence, the resulting moduli spaces will be the fibres of the different strata.

Although we will indicate in some places the correspondence with previous work, we will establish the necessary concepts surrounding monopoles -- including convenient notions of mass, charge and framing -- directly from more general notions in differential geometry.

We begin by introducing monopoles in \cref{sec-monopoles}. We furthermore construct a model which has the asymptotic behaviour we expect of a monopole of a given charge and mass, and we study the adjoint bundle in relationship to this model. We then explain what we mean by framed monopoles of the given mass and charge, as well as the corresponding moduli space. Our definitions differ somewhat from other approaches in the literature, but are better suited to our construction.

In \cref{sec-analytical} we start by looking at the linearised operator involved in the moduli space construction. This serves as motivation to introduce the analytical tools of the b and scattering calculuses which provide the necessary framework to formally set up our moduli space construction.

We then study the linearised operator in more detail in \cref{sec-linearised} using this analytical framework, proving that it is Fredholm and surjective and computing its index.

Lastly, in \cref{sec-moduli} we complete the construction of the moduli space. This involves applying the properties of the linearised operator to carry out the infinite-dimensional quotient. We then see how the construction can be viewed as a hyper-Kähler reduction, which provides a hyper-Kähler metric. We finish by discussing our resulting moduli spaces in the context of some specific cases.

The main result is \cref{thm-main}, which states that the moduli space of framed monopoles for any mass and charge is either empty or a smooth hyper-Kähler manifold of known dimension. This theorem can be phrased as follows.

\begin{theorem*}
Let \(G\) be a connected, simply connected, semisimple compact Lie group, and let \(\mass\) and \(\charge\) be two elements in a maximal toral subalgebra \(\t\) of the Lie algebra \(\g\) of \(G\). Assume that \(\exp(2\pi\charge)=1_G\). Then, the moduli space \(\fMod_\mc\) of framed monopoles of mass \(\mass\) and charge \(\charge\) is either empty or a smooth, hyper-Kähler manifold. If \(\Roots\) is the space of roots of \(\g^\CC\) relative to \(\t^\CC\), the dimension of this manifold is given by
\begin{equation}
\dim(\fMod_\mc)=2\sum_{\substack{\alpha\in R\\i\alpha(\mass)>0}}i\alpha(\charge)-2\sum_{\substack{\alpha\in R\\\alpha(\mass)=0\\i\alpha(\charge)>0}}i\alpha(\charge)\,.
\end{equation}
\end{theorem*}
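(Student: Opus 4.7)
The plan is to realise $\fMod_\mc$ as a hyper-Kähler quotient of the configuration space $\A$ of framed pairs $(A,\Phi)$ by the gauge group $\G$, where the three components of the hyper-Kähler moment map cut out the Bogomolny equations. With the analytical foundations of \cref{sec-analytical} and the Fredholm, surjectivity and index results of \cref{sec-linearised} already in place, the proof of the main theorem amounts to combining these ingredients with an infinite-dimensional slice argument and then verifying that the resulting smooth quotient carries a natural hyper-Kähler structure of the stated dimension.

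First I would complete the smooth manifold step. Given a monopole $\aphi$, the surjectivity of the gauge-fixed linearised operator $\tDir$ (proven in \cref{sec-linearised}) lets the implicit function theorem present a neighbourhood of $\aphi$ in the space of solutions as a smooth Banach submanifold of $\A$ with tangent space $\ker(\tDir)$. The action of $\G$ on this submanifold is free -- the framing condition kills all potential stabilisers, which in the unframed setting would come from the centraliser of $\Phi$ at infinity -- and the properness of the action in the chosen b/scattering Sobolev topology can be checked by standard convergence arguments adapted from \cite{DK90}. A Kuranishi-type slice transverse to the gauge orbit then gives a smooth Hausdorff chart on $\fMod_\mc$, and varying the base point produces the global smooth structure.

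The dimension count then reduces to the index computation of $\tDir$ performed in \cref{sec-linearised}. The key input is the splitting of the adjoint bundle at infinity into the subbundle commuting with $\mass$ (treated via the b calculus) and its orthogonal complement (treated via the scattering calculus). Each root $\alpha\in\Roots$ of $\g^\CC$ relative to $\t^\CC$ contributes according to the values of $\alpha(\mass)$ and $\alpha(\charge)$: roots with $i\alpha(\mass)>0$ sit in the scattering regime and contribute $2i\alpha(\charge)$ to the index, while roots with $\alpha(\mass)=0$ sit in the b regime, where indicial root crossings force the subtraction of $2i\alpha(\charge)$ for those with $i\alpha(\charge)>0$. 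Summing these contributions produces the displayed formula.

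Finally, I would put the hyper-Kähler structure on $\fMod_\mc$ by reinterpreting the above as a hyper-Kähler reduction. The space $\A$ carries a flat hyper-Kähler structure coming from the $L^2$ inner product together with the triple of complex structures induced by $\RR^3$; the three components of the resulting moment map for the $\G$-action encode the three real Bogomolny equations. The descent of this structure to the quotient is then formal, once one has verified that the relevant pairings converge in the chosen Sobolev spaces and that the moment map derivatives match the linearised operator $\tDir$. The main obstacle, and the most delicate part of the argument, is precisely this compatibility check: the decay parameters for the b and scattering factors must be chosen so simultaneously the slice theorem, the $L^2$ pairing defining the metric, and the moment map identification all make sense, and so the indicial root analysis yields the dimension formula above rather than some shifted variant.
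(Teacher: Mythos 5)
Your overall strategy coincides with the paper's: the moduli space is exhibited as an infinite-dimensional hyper-Kähler reduction, smoothness comes from an implicit-function-theorem slice built on the Fredholmness and surjectivity of \(\Dir_\APhi\), uniqueness of the Coulomb-gauge representative is obtained by a convergence argument adapted from Donaldson--Kronheimer, and the dimension is the index computed root-by-root with the b/scattering split. All of that matches \cref{sec-moduli} and \cref{thm-main}.

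There is, however, one concrete gap in your first step. You propose to apply the implicit function theorem \emph{at an arbitrary monopole} \(\aphi\in\C_\mc^s\), invoking the surjectivity of the linearised operator from \cref{sec-linearised}. But \cref{thm-linearised-operator-fredholm-index} and the surjectivity proposition are only proved under the hypothesis that the pair is \emph{bounded polyhomogeneous}; a generic element of \(\C_\mc^s\) is merely in a hybrid Sobolev space and the index/surjectivity arguments (which rest on Kottke's parametrix construction and the Weitzenböck integration by parts for polyhomogeneous sections) do not apply to it directly. The paper closes this gap with an entire regularity step, \cref{pro-regular-representative}: one first puts the monopole in Coulomb gauge relative to a nearby pair \(M_0\) with \(M_0-M_\mc\) smooth and compactly supported (\cref{pro-coulomb-gauge}, which itself requires showing \(\d_M^*\d_M\) is an isomorphism), and then bootstraps the resulting elliptic system \(\Dir_{M_0}m+\{m,m\}=v\) using the quadratic gain of the multiplication map \eqref{eq-multiplication-5} to conclude \(g\cdot M\in M_\mc+\scrB^{2,\infty}\). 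Only after this can the slice argument and the kernel-dimension count be run at the chosen representative. Your proposal should also record why the resulting smooth structure and metric are independent of the regularity parameter \(s\) (\cref{pro-independence-of-s}), which again leans on the same bootstrapping; otherwise the construction as stated depends on an auxiliary choice.
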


We will explain the elements involved in more detail throughout this work, including an alternative expression for the dimension.

\subsection{Notation}

Our setting throughout is a principal \(G\)-bundle \(P\) over Euclidean \(\RR^3\), where \(G\) is any connected, simply connected, compact, semisimple Lie group. We denote the Lie algebra of \(G\) as \(\g\).

We will write \(\Aut(P)\) for the automorphism bundle, whose fibres are the automorphism groups of each of the fibres of \(P\). The group of automorphisms, or gauge transformations, of the bundle \(P\) will be written as \(\G\). We have
\begin{equation}
\G=\Gamma(\Aut(P))\,.
\end{equation}
We write \(\Ad(P)\) for the adjoint bundle, whose fibres are Lie algebras associated to the fibres of the automorphism bundle.

Although we write \(\Gamma(E)\) for the space of sections of a bundle \(E\), these might not necessarily be smooth. The appropriate regularity and asymptotic conditions for each case will be made precise later on.

Note that from the Killing form we can obtain a bi-invariant Riemannian metric on \(G\) and an \(\Ad\)-invariant inner product on \(\g\). Combining this with the Euclidean metric on \(\RR^3\), the bundles \(\Exterior^j\otimes\Ad(P)\) of \(\Ad(P)\)-valued \(j\)-forms acquire an inner product on their fibres. Together with the Euclidean measure on the base manifold, this will allow us to define Lebesgue spaces \(L^p\) and Sobolev spaces \(W^{k,p}\) on the spaces \(\Omega^j(\Ad(P))\) of \(\Ad(P)\)-valued \(j\)-forms.

\section{Monopoles and framing}\label{sec-monopoles}

We start by defining monopoles. We then discuss the mass and the charge and establish an asymptotic model for a choice of them. Lastly, we briefly explain how this will be used to set up the moduli space problem.

\subsection{Monopole definition}

We construct monopoles over \(\RR^3\) using the principal \(G\)-bundle \(P\) described above. In particular, we consider pairs in the following space.

\begin{definition}
The \emph{configuration space of pairs} is
\begin{equation}
\C\coloneqq\A(P)\oplus\Gamma(\Ad(P))\,,
\end{equation}
where \(\A(P)\) denotes the space of principal connections on \(P\). If \(\APhi\in\C\), we refer to \(A\) and \(\Phi\) as the \emph{connection} and the \emph{Higgs field} of the configuration pair.

On this configuration space, we define the \emph{Bogomolny map}
\begin{equation}
\begin{alignedat}{1}
\Bog\colon\C&\to\Omega^1(\Ad(P))\\
\APhi&\mapsto\hs F_A-\d_A\Phi
\end{alignedat}
\end{equation}
and the \emph{energy} map
\begin{equation}
\begin{alignedat}{1}
\E\colon\C&\to\RR_{\geq0}\cup\{\infty\}\\
\APhi&\mapsto\frac{1}{2}(\lVert F_A\rVert_{L^2}^2+\lVert\d_A\Phi\rVert_{L^2}^2)\,.
\end{alignedat}
\end{equation}
\end{definition}

Monopoles are then defined inside this space.

\begin{definition}
We say that \(\APhi\in\C\) is a \emph{monopole} if it satisfies the \emph{Bogomolny equation}
\begin{equation}
\Bog(A,\Phi)=0
\end{equation}
and it has finite energy, that is,
\begin{equation}
\E(A,\Phi)<\infty\,.
\end{equation}
\end{definition}

Note that the group \(\G\) of gauge transformations of \(P\) acts on configuration pairs \(\APhi\in\C\). The resulting action on the connection is the usual one and the action on the Higgs field is the fibrewise adjoint action.

With respect to this action, the Bogomolny map is equivariant (\(\G\) also acts fibrewise on the codomain \(\Omega^1(\Ad(P))\)), and the energy map is invariant. This means that the gauge transformation of a monopole is still a monopole, which gives rise to the idea of the moduli space of monopoles as a space that parametrises monopoles modulo gauge transformations.

Note that, with an appropriate choice of the space of sections, \(\G\) will be an infinite-dimensional Lie group. Then, its Lie algebra \(\GLie\) will be given by the corresponding space of sections of the adjoint bundle, that is,
\begin{equation}
\GLie=\Gamma(\Ad(P))\,.
\end{equation}

Lastly, let us state a pair of formulas which will be useful for us later on.

\begin{proposition}\label{pro-formulas}
The derivative of the Bogomolny map at a point \(\APhi\in\C\) is given by
\begin{equation}
(\d\Bog)_\APhi(a,\phi)=\hs\d_Aa+\ad_\Phi a-\d_A\phi\,,
\end{equation}
for any \(\aphi\in T_\APhi\C\).

If \(\G\) is a Lie group, the infinitesimal action of an element \(X\in\GLie\) is given, at a point \(\APhi\in\C\), by
\begin{equation}
(\ia{X})_\APhi=-(\d_AX,\ad_\Phi X)\,.
\end{equation}
\end{proposition}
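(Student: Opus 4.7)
The plan is to verify both formulas by direct first-order computations, one at the level of \(\C\) and one at the level of \(\G\). There is no substantive obstacle here; the entire content is careful sign bookkeeping, principally the identity \([a,\Phi]=-\ad_\Phi a\) and the chosen convention for how \(\G\) acts on connections.

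For the first formula, since \(\A(P)\) is affine over \(\Omega^1(\Ad(P))\), the straight line \(t\mapsto(A+ta,\Phi+t\phi)\) lies in \(\C\) and has initial velocity \(\aphi\), so I would compute \(\frac{d}{dt}\big|_{t=0}\Bog(A+ta,\Phi+t\phi)\) along it. Using the standard expansions
\begin{equation*}
F_{A+ta}=F_A+t\,\d_A a+\tfrac{t^2}{2}[a,a],\qquad \d_{A+ta}(\Phi+t\phi)=\d_A\Phi+t(\d_A\phi+[a,\Phi])+O(t^2),
\end{equation*}
together with \([a,\Phi]=-\ad_\Phi a\), differentiation at \(t=0\) gives \(\hs\d_A a-\d_A\phi+\ad_\Phi a\), which is the claimed expression.

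For the second formula, I would take the one-parameter subgroup \(g_t=\exp(tX)\in\G\) and differentiate the gauge action at \(t=0\). On the Higgs field the action is fibrewise adjoint, \(g_t\cdot\Phi=\Ad(g_t)\Phi\), whose \(t=0\) derivative is \(\ad_X\Phi=-\ad_\Phi X\). On the connection, the action takes the form \(g_t\cdot A=\Ad(g_t)A-(\d g_t)g_t^{-1}\), and a direct Taylor expansion gives derivative \([X,A]-\d X=-(\d X+[A,X])=-\d_A X\). Combining these two components yields \((\ia{X})_\APhi=-(\d_A X,\ad_\Phi X)\), as claimed.

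As a sanity check for the signs, one can apply \((\d\Bog)_\APhi\) to \(-(\d_A X,\ad_\Phi X)\) and use \(\d_A\d_A=\ad_{F_A}\) together with the Leibniz rule \(\d_A[\Phi,X]=[\d_A\Phi,X]+[\Phi,\d_A X]\); the terms \(\pm\ad_\Phi\d_A X\) cancel and one is left with \(\ad_X(\hs F_A-\d_A\Phi)=\ad_X\Bog(A,\Phi)\), which is exactly the infinitesimal form of the \(\G\)-equivariance of \(\Bog\) noted in the discussion above. This confirms the internal consistency of the two formulas.
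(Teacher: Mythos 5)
Your computation is correct in every detail, including the sign bookkeeping via \([a,\Phi]=-\ad_\Phi a\) and the derivative of the gauge action on the connection; the paper states this proposition without proof precisely because it is this routine first-order verification. Your closing consistency check against the infinitesimal equivariance \((\d\Bog)_\APhi(\ia{X})_\APhi=\ad_X\Bog(A,\Phi)\) is a worthwhile confirmation that the two sign conventions are compatible.
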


\begin{remark}\label{rmk-dimensional-reduction}
Monopoles can be viewed as a dimensional reduction of anti-self-dual Yang--Mills connections: if a connection on \(\RR^4\) is invariant in one direction, and we rename the connection matrix in this direction as the Higgs field, the Bogomolny map applied to the resulting connection on \(\RR^3\) with this Higgs field represents the self-dual part of the curvature of the original connection.

This relationship becomes apparent throughout the study of monopoles. For example, some expressions involving the connection and Higgs field of a configuration pair can be viewed as a simpler expression involving the corresponding connection on \(\RR^4\), and many of the tools used have their counterparts in the study of connections on \(4\) dimensions.
\end{remark}

\subsection{Mass and charge, the model, and the adjoint bundle}

In the case of \(G=\SU(2)\), we know that the finite energy condition implies the existence of a mass and a charge, which determine the monopoles' asymptotic behaviour, but in the general case this picture is not necessarily so clear. This means that often an additional asymptotic condition is imposed, of the form
\begin{subequations}
\begin{alignat}{1}
\Phi&=\mass-\frac{1}{2r}\charge+o(r^{-1})\,,\\
F_A&=\frac{1}{2r^2}\charge\otimes(\hs\d r)+o(r^{-2})\,,
\end{alignat}
\end{subequations}
in some gauge along rays from the origin, for some \(\mass,\charge\in\g\), called the \emph{mass} and the \emph{charge}, respectively. Here, \(r\) is the radial variable and the \(2\)-form \(\hs\d r\) is, hence, the area form of spheres centred around the origin. The lower order terms written here as \(o(r^\ph)\) have different definitions in different works, but we will specify our own decay conditions in the next section.

However, our approach is to not only impose such asymptotic conditions for some gauge, but to actually fix a model for this asymptotic behaviour and then to require our monopoles to be close enough to this model. This will have some benefits, as explained in \cref{subsec-framed}, and will allow us to set up the moduli space construction.

Hence, for a mass \(\mass\) and a charge \(\charge\), both in \(\g\), our aim is to construct a model pair \(\APhisub{\mc}\) that, near infinity, satisfies the Bogomolny equation and has exactly the form
\begin{subequations}
\begin{alignat}{1}
\Phi_\mc&=\mass-\frac{1}{2r}\charge\,,\\
F_{A_\mc}&=\frac{1}{2r^2}\charge\otimes(\hs\d r)\,,\label{eq-model-curvature-along-ray}
\end{alignat}
\end{subequations}
in some gauge along any ray from the origin.

To do this, we start by observing that these conditions imply that the mass and the charge must commute, since, considering the above form near infinity along a ray, we have
\begin{equation}
\begin{alignedat}{1}
0&=\d_{A_\mc}(\Bog(A_\mc,\Phi_\mc))\\
&=\d_{A_\mc}(\hs F_\mc)-\d_{A_\mc}^2\Phi_\mc\\
&=\d_{A_\mc}(\hs F_\mc)-[F_{A_\mc},\Phi_\mc]\\
&=\d_{A_\mc}(\hs F_\mc)-\frac{1}{2r^2}[\charge,\mass]\otimes(\hs\d r)\,,
\end{alignedat}
\end{equation}
and from \eqref{eq-model-curvature-along-ray} we deduce that the two summands in the last expression must be linearly independent if non-zero (and hence must be zero). Hence, we can find a maximal torus \(T\) inside \(G\) whose Lie algebra \(\t\) contains \(\mass\) and \(\charge\). We will firstly build our model on a principal \(T\)-bundle and then carry it over to a principal \(G\)-bundle through an associated bundle construction.

Now, since \(T\) is Abelian, the adjoint bundle of any principal \(T\)-bundle is trivial, and hence can be identified with \(\underline{\t}\). Furthermore, any principal connection on the principal \(T\)-bundle induces the trivial connection on its adjoint bundle. In particular, we can define constant sections, like \(\underline{\mass}\) and \(\underline{\charge}\), which are equal to \(\mass\) and \(\charge\) everywhere and furthermore are covariantly constant with respect to any principal connection.

Additionally, if we have the extra integrality condition \(\exp(2\pi\charge)=1_G\), we can build a principal \(T\)-bundle on the unit sphere \(S^2\) whose curvature is \(\frac{1}{2}\underline{\charge}\otimes\dvol_{S^2}\). Extending this bundle and connection radially to \(\RR^3\mzero\) we obtain a principal \(T\)-bundle which we call \(Q\). The connection, which we call \(A_Q\), satisfies the curvature condition
\begin{equation}
F_{A_Q}=\frac{1}{2r^2}\underline{\charge}\otimes(\hs\d r)\,.
\end{equation}
Considering the identification \(\Ad(Q)=\underline{\t}\) as bundles over \(\RR^3\mzero\), we can construct
\begin{equation}
\Phi_Q\coloneqq\underline{\mass}-\frac{1}{2r}\underline{\charge}
\end{equation}
as an element of \(\Gamma(\Ad(Q))\). Since the constant sections are covariantly constant with respect to \(A_Q\), it is straightforward to check that the pair \(\APhisub{Q}\) satisfies the Bogomolny equation over \(\RR^3\mzero\).

Now, since \(T\) is a subgroup of \(G\), we can associate to \(Q\) a principal \(G\)-bundle \(P\) (over \(\RR^3\mzero\)). We can carry the pair \(\APhisub{Q}\) over to \(P\) through this construction. But since \(G\) is simply connected, \(P\) must necessarily be trivial over \(\RR^3\mzero\), and hence we can extend it to \(\RR^3\). The pair \(\APhisub{Q}\) can be extended smoothly over the origin as well, modifying it if necessary inside the unit ball and leaving it unchanged elsewhere. This yields a pair over \(\RR^3\), which is therefore in \(\C\).

\begin{definition}
We refer to the pair constructed above as the \emph{asymptotic model pair of mass \(\mass\) and charge \(\charge\)}, and we write it as
\begin{equation}
\APhisub{\mc}\,.
\end{equation}
\end{definition}

Note that this asymptotic model pair still satisfies the Bogomolny equation near infinity, but not necessarily near the origin. This is not a problem, since we only want to use it to study the behaviour of our monopoles near infinity. However, defining it over the entire \(\RR^3\) will make some notation simpler.

Note, furthermore, that this model is not the unique pair which satisfies the desired asymptotic conditions. However, we will fix it here and use it throughout our constructions. Although it can be easily seen that some choices involved in its construction do not ultimately change the results, we do not go into further detail here.

It will be easy to understand the behaviour of this pair, and hence of our monopoles, if we decompose the adjoint bundle appropriately. In order to do this, near infinity, we perform a root space decomposition of each fibre of the complexification of the adjoint bundle \(\Ad(P)^\CC\), with the maximal Abelian subalgebras given by the fibres of \(\Ad(Q)^\CC\subset\Ad(P)^\CC\). We refer to each of the resulting subbundles as \emph{root subbundles}, which will be denoted by \(\gb\) for each \(\alpha\) in the space \(R\) of roots of \(\g^\CC\). The bundle of maximal Abelian subalgebras will simply be written as \(\underline{\t^\CC}\).

Therefore, we obtain a decomposition of the adjoint bundle near infinity as
\begin{equation}\label{eq-ad-bundle-decomposition}
\Ad(P)^\CC\iso\underline{\t^\CC}\oplus\bigoplus_{\alpha\in\Roots}\gb\,,
\end{equation}
with the adjoint action on the adjoint bundle behaving under this root subbundle decomposition in the same way as it would behave under the analogous root space decomposition, that is, if \(X\in\Gamma(\underline{\t^\CC})\) and \(Y\in\Gamma(\gb)\), then
\begin{equation}
\ad_X Y=\alpha(X)Y\,.
\end{equation}

We can then rephrase the asymptotic properties of the asymptotic model pair in the following way.

\begin{proposition}
For any choice of commuting mass \(\mass\) and charge \(\charge\) such that \(\exp(2\pi\charge)=1_G\), there exists a smooth asymptotic model pair \(\APhisub{\mc}\in\C\), which, near infinity, satisfies the Bogomolny equation as well as
\begin{subequations}
\begin{alignat}{1}
\Phi_\mc&=\underline{\mass}-\frac{1}{2r}\underline{\charge}\,,\label{eq-model-higgs-field}\\
F_{A_\mc}&=\frac{1}{2r^2}\underline{\charge}\otimes(\hs\d r)\,,
\end{alignat}
\end{subequations}
where \(\underline{\mass},\underline{\charge}\in\Gamma(\underline{\t})\) are constant sections in (the real part of) the first summand of the decomposition \eqref{eq-ad-bundle-decomposition}.
\end{proposition}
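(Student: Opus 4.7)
The plan is to make precise the construction outlined in the preceding paragraphs. Since \(\mass\) and \(\charge\) commute (already derived from the required asymptotic form and the Bogomolny equation), I will pick a maximal torus \(T\subset G\) with Lie algebra \(\t\) containing both. The overall strategy is to build the model first on a principal \(T\)-bundle \(Q\to\RR^3\mzero\), then to transport it to a principal \(G\)-bundle via the associated bundle construction, and finally to extend smoothly over the origin.

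The key step will be the construction of a principal \(T\)-bundle with connection \((Q_{S^2},A_{S^2})\) over \(S^2\) whose curvature equals \(\frac{1}{2}\underline{\charge}\otimes\dvol_{S^2}\). By Chern--Weil (or equivalently by a clutching construction with transition function \(\exp(\theta\charge)\) along the equator), such a pair exists precisely when the total flux \(\int_{S^2}F_{A_{S^2}}=2\pi\charge\) lies in the integral lattice \(\ker(\exp\colon\t\to T)\), which is exactly what the hypothesis \(\exp(2\pi\charge)=1_G\) guarantees. Pulling back along the radial projection \(\RR^3\mzero\to S^2\) then yields \((Q,A_Q)\); since \(\hs\d r\) restricts on each sphere of radius \(r\) to its area form, one finds \(F_{A_Q}=\frac{1}{2r^2}\underline{\charge}\otimes(\hs\d r)\).

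Next, using that \(T\) is abelian so \(\Ad(Q)=\underline{\t}\) with \(A_Q\) inducing the trivial connection on it, the constant sections \(\underline{\mass},\underline{\charge}\) will be covariantly constant, so \(\Phi_Q\coloneqq\underline{\mass}-\frac{1}{2r}\underline{\charge}\) is a well-defined smooth section. I would then verify the Bogomolny equation for \(\APhisub{Q}\) on \(\RR^3\mzero\) by a direct computation using \(\d(\frac{1}{r})=-\frac{1}{r^2}\d r\) and \(\hs\hs=1\) on \(1\)-forms. Associating \(Q\) to the principal \(G\)-bundle \(Q\times_T G\) carries the pair over; since \(\RR^3\mzero\simeq S^2\) and principal \(G\)-bundles over \(S^2\) are classified by \(\pi_1(G)\), which vanishes by hypothesis, this associated bundle is trivial and extends to a principal \(G\)-bundle \(P\) over \(\RR^3\).

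Finally, I would modify the pair inside the unit ball using a smooth cutoff so that it extends smoothly across the origin, leaving it unchanged on \(\RR^3\setminus B_1\); all asserted formulas and the Bogomolny equation therefore continue to hold near infinity, and the decomposition of \(\Phi_\mc\) into constant \(\underline{\mass}\) and \(\underline{\charge}\) components in the summand \(\underline{\t^\CC}\) of \eqref{eq-ad-bundle-decomposition} is built in. I expect the main obstacle to be the first step, namely pinning down the normalisations so that the Dirac quantisation condition on \(2\pi\charge\) matches the stated exponential condition exactly; once that is in place the remaining steps are essentially bookkeeping with standard bundle-theoretic tools.
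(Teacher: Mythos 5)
Your proposal is correct and follows essentially the same route as the paper: derive commutativity of \(\mass\) and \(\charge\), build the \(T\)-bundle with prescribed curvature on \(S^2\) using the integrality condition \(\exp(2\pi\charge)=1_G\), extend radially, define \(\Phi_Q\) via the trivialised adjoint bundle \(\Ad(Q)=\underline{\t}\), pass to the associated \(G\)-bundle (trivial over \(\RR^3\mzero\) by simple connectivity), and smooth over the origin. The normalisation you flag as the main obstacle does work out exactly as you expect, with the clutching function \(\exp(\theta\charge)\) matching the flux \(2\pi\charge\) of the curvature \(\tfrac{1}{2}\underline{\charge}\otimes\dvol_{S^2}\).
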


The subbundle \(\underline{\t^\CC}\) in the decomposition is obviously trivial, but we can also relate the other terms to simple line bundles. We denote a complex line bundle of degree \(d\) over the \(2\)-sphere, with its homogeneous connection, by \(\lb{d}\). We can extend it radially to \(\RR^3\mzero\), where we refer to it also as \(\lb{d}\).

\begin{corollary}\label{cor-model-decomposition}
The asymptotic model pair \((A_\mc,\Phi_\mc)\) decomposes along the root subbundle decomposition. In particular, on each complex line bundle \(\gb\), the pair satisfies
\begin{subequations}
\begin{alignat}{1}
\restrop{\ad_{\Phi_\mc}}{\gb}&=\alpha(\mass)-\frac{\alpha(\charge)}{2r}\,,\\
F_{\restr{A_\mc}{\gb}}&=\frac{\alpha(\charge)}{2r^2}(\hs\d r)\,.
\end{alignat}
\end{subequations}
Therefore, by restricting the connection \(A_\mc\) to each subbundle \(\gb\) we have
\begin{equation}\label{eq-root-subbundle-degree}
\gb\iso\lb{i\alpha(\charge)}\,.
\end{equation}
\end{corollary}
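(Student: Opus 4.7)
The plan is to derive both identities directly from the explicit form of the model pair given in the preceding proposition, combined with the defining property \(\ad_X Y=\alpha(X)Y\) for \(X\in\Gamma(\underline{\t^\CC})\) and \(Y\in\Gamma(\gb)\). For the Higgs field, this is essentially immediate: substituting \(\Phi_\mc=\underline{\mass}-\frac{1}{2r}\underline{\charge}\) into \(\ad_{\Phi_\mc}\) and using that \(\underline{\mass}\) and \(\underline{\charge}\) are constant sections of \(\underline{\t^\CC}\) yields the claimed scalar action on each \(\gb\).

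For the curvature identity, I would first justify that restricting \(A_\mc\) to the subbundle \(\gb\) is even legitimate. The point is that \(A_\mc\) was constructed near infinity from the \(T\)-connection \(A_Q\) on \(Q\) via the associated bundle \(P=Q\times_T G\), so the induced connection on \(\Ad(P)^\CC\) preserves \(\Ad(Q)^\CC\cong\underline{\t^\CC}\) and the eigenbundle decomposition under \(\ad_{\underline{\t^\CC}}\). Hence it descends to a genuine connection on each line subbundle \(\gb\). The curvature of this restriction is then given by the action of \(F_{A_\mc}\) on \(\gb\); since \(F_{A_\mc}\) takes values in \(\underline{\t}\), this action is multiplication by \(\alpha\) applied to its \(\t\)-valued part, producing the second formula directly from the proposition.

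To identify \(\gb\) with \(\lb{i\alpha(\charge)}\), I would invoke Chern--Weil theory: the degree is \(\frac{i}{2\pi}\) times the integral of the restricted curvature over any \(2\)-sphere centred at the origin, which (after cancelling the \(r^{-2}\) factor against the area \(4\pi r^2\)) evaluates to \(i\alpha(\charge)\). The integrality condition \(\exp(2\pi\charge)=1_G\), together with the fact that roots pair integrally with the kernel of \(\exp\colon\t\to T\), ensures that \(i\alpha(\charge)\in\ZZ\), so the degree is well-defined. Since there is at most one isomorphism class of complex line bundle of each degree on \(S^2\) (and the connection is the homogeneous one, so the radial extension agrees with \(\lb{d}\)), this gives the stated isomorphism. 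The only conceptual step that requires care is confirming that the root decomposition is preserved by \(A_\mc\); everything else is unravelling the associated bundle construction and reading off the formulas.
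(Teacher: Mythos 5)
Your argument is correct and follows exactly the route the paper intends (the paper states this corollary without an explicit proof, as an immediate consequence of the preceding proposition and the root subbundle decomposition): the Higgs field formula is the eigenvalue property \(\ad_XY=\alpha(X)Y\) applied to the constant sections, the restriction of \(A_\mc\) to each \(\gb\) is legitimate because the connection is induced from the \(T\)-connection \(A_Q\) and so preserves the eigenbundle decomposition, and the degree \(i\alpha(\charge)\) follows from Chern--Weil applied to the restricted curvature. Your additional remarks on integrality of \(i\alpha(\charge)\) via \(\exp(2\pi\charge)=1_G\) and on the classification of line bundles over \(S^2\) correctly fill in the details the paper leaves implicit.
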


\begin{remark}
Notice that, although \(0\) is not defined as a root of the Lie algebra, for most purposes we can write \(\underline{\t^\CC}\) as \(\rank(G)\) copies of \(\gb[0]\). By this, we mean that substituting \(\alpha(\ph)\) by \(0\) in results about subbundles \(\gb\) will yield the analogous results for \(\underline{\t^\CC}\). We will follow this convention from now on.
\end{remark}

\subsection{Framed monopoles}\label{subsec-framed}

One of the key ideas in the construction of moduli spaces of monopoles is to study \emph{framed monopoles}. This means that we fix not only the mass and the charge, but also the specific asymptotic behaviour, allowing only gauge transformations which tend to the identity at infinity.

In our case, this will be achieved by defining the configuration space for a given mass and charge as the space of pairs which differ from the asymptotic model pair \(\APhisub{\mc}\) by a decaying element of a Banach space. This guarantees that the asymptotic behaviour is the same up to a certain order, and it provides a Banach structure to be able to apply the necessary analysis.

The group of gauge transformations will then be modelled on a related Banach space, so that its Lie algebra also consists of decaying sections of the adjoint bundle.

The specific form of these Banach spaces will be discussed at the end of the next section.

\section{Analytical framework}\label{sec-analytical}

The first step towards the construction of the moduli space is to look at the linearised problem, and more specifically at the linearised operator made up of the linearised Bogomolny equation together with a gauge fixing condition.

The specific shape of this operator will motivate the introduction of the \emph{b} and \emph{scattering calculuses}, whose combination is particularly well suited to the study of this problem and will provide the analytical framework for the moduli space construction. The b calculus is analogous to the analysis on cylindrical ends studied in other works \cite{Can75,LM85}, whereas the scattering calculus in this case is simply the typical analysis on \(\RR^3\). However, this formulation offers a convenient setup for our problem.

In \cref{subsec-calculuses,subsec-sobolev-spaces,subsec-polyhomogeneous,subsec-fredholm} we summarise the most relevant analytical definitions and results for these calculuses, which can be combined to define hybrid Sobolev spaces. Most of this analytical framework is obtained from Kottke's work \cite{Kot15a} and references therein, including Melrose's works \cite{Mel93,Mel94}, which contain a more detailed account.

We then choose the specific spaces which will be appropriate for our case, explain some of their properties and formally set up the moduli space construction.

\subsection{The linearised operator}\label{subsec-linearised-operator}

As stated above, the first part of the linearised operator is the derivative of the Bogomolny map
\begin{equation}
(\d\Bog)_\APhi(a,\phi)=\hs\d_Aa+\ad_\Phi a-\d_A\phi\,.
\end{equation}

On the other hand, the action of the infinitesimal gauge transformations is
\begin{equation}
(\ia{X})_\APhi=-(\d_AX,\ad_\Phi X)\,,
\end{equation}
so we can consider the formal \(L^2\) adjoint of this map, that is,
\begin{equation}\label{eq-adjoint-of-infinitesimal-action}
\aphi\mapsto-\d_A^*a+\ad_\Phi \phi\,,
\end{equation}
whose kernel will be orthogonal to the orbits. Configuration pairs which differ from \(\APhi\) by an element in this kernel are said to be in \emph{Coulomb gauge} with respect to \(\APhi\).

Putting both of these together (and changing the sign) we obtain the following operator.

\begin{definition}
Let \(\APhi\in\C\). We define its \emph{associated Dirac operator} as
\begin{equation}\label{eq-associated-dirac-operator-definition}
\Dir_\APhi=\dmat{-\hs\d_A&\d_A\\\d_A^*&0}-\ad_\Phi\,,
\end{equation}
acting on the space of sections
\begin{equation}
\Gamma((\Exterior^1\oplus\Exterior^0)\otimes\Ad(P))\,.
\end{equation}
\end{definition}

We view this operator (or, rather, its complexification, which we denote in the same way) as a Dirac operator by using the isomorphism
\begin{equation}
(\Exterior^1\oplus\Exterior^0)^\CC\iso\Sb\otimes\Sb^*\,,
\end{equation}
where \(\Sb\) is the spinor bundle on \(\RR^3\). Indeed, considering the Clifford action only on the first factor \(\Sb\), the resulting Dirac operator will be precisely the first summand of \eqref{eq-associated-dirac-operator-definition}. In our case, the second factor \(\Sb^*\) can simply be written as \(\underline{\CC^2}\), since the connection and Clifford action are trivial, whereas we preserve the notation for the first factor as \(\Sb\) to emphasise the Clifford action on it. Therefore, we can write
\begin{equation}
\Dir_\APhi=\Dir_A-\ad_\Phi\,,
\end{equation}
where \(\Dir_A\) is the Dirac operator twisted by the connection \(A\) on \(\Ad(P)\), which acts on the space of sections
\begin{equation}
\Gamma(\Sb\otimes\underline{\CC^2}\otimes\Ad(P)^\CC)\,,
\end{equation}
with the factor \(\underline{\CC^2}\) simply having the effect of duplicating the bundle and operator. Note how the Dirac operator notation is related to the interpretation of configuration pairs as the dimensional reduction of connections on \(\RR^4\), as explained in \cref{rmk-dimensional-reduction}.

\begin{remark}\label{rem-real-and-complex}
Here we have complexified the bundle \((\Exterior^1\oplus\Exterior^0)\oplus\Ad(P)\), which allows us to apply the theory of Dirac operators and spinor bundles, as well as to make use of the root subbundle decomposition \eqref{eq-ad-bundle-decomposition}. However, we are ultimately concerned with real solutions to the equations, so it is important to consider how our analytical results translate between the real and complex contexts.

The crucial observation is that, although we may regard some operators as complex, they preserve the real parts of the spaces between which they act. To be more specific, our spaces are originally defined as real spaces -- like the space of sections of a real vector bundle -- and are then complexified, providing them the structure of complex vector spaces with a real structure. Furthermore, it is easy to check that the operators we define throughout preserve these real structures. Then, if such a complexified operator between complexified spaces is Fredholm, the corresponding operator between the real parts is also Fredholm, has the same index, and its kernel and image will be the real parts of the kernel and image of the complexified one. In particular, the real dimension of the kernel of the real operator coincides with the complex dimension of the kernel of the complexified operator.
\end{remark}

The characterisation of \(\Dir_\APhi\) as a Dirac operator will allow us to write it out in a more convenient form. To do so, note that if \(\APhi=\APhisub{\mc}+\aphi\) for some mass \(\mass\) and charge \(\charge\), then
\begin{equation}
\Dir_\APhi=\Dir_\APhisub{\mc}+\mathop{(\operatorname{cl}\otimes\ad)}\nolimits_a-\ad_\phi\,,
\end{equation}
where \(a\) acts through Clifford multiplication on the \(\Sb\) factor with its \(\Exterior^1\) component and through the adjoint action on the \(\Ad(P)^\CC\) factor with its \(\Ad(P)\) component, and \(\phi\) acts similarly through the adjoint action. Hence, we will actually write out \(\Dir_\APhisub{\mc}\) and use the above formula to understand the operator for other configuration pairs of the given mass and charge.

Now, near infinity, \(A_\mc\) and \(\Phi_\mc\) decompose along the root subbundle decomposition \eqref{eq-ad-bundle-decomposition}, so the operator \(\Dir_\APhisub{\mc}\) will be made up of the diagonal terms
\begin{equation}\label{eq-linearised-operator-on-subbundle}
\Dir_\alpha\coloneqq\restrop{\Dir_\APhisub{\mc}}{\Sb\otimes\gb}=\Dir_{i\alpha(\charge)}-\alpha(\mass)+\frac{1}{2r}\alpha(\charge)\,,
\end{equation}
where \(\Dir_{i\alpha(\charge)}\) represents the Dirac operator twisted by the line bundle \(\lb{i\alpha(\charge)}\).

Therefore, we can characterise the behaviour of the linearised operator near infinity in the following way.

\begin{proposition}
The operator \(\Dir_\APhisub{\mc}\) decomposes near infinity as
\begin{equation}
\Dir_\APhisub{\mc}=\Dir_0^{\oplus2\rank(G)}\oplus\bigoplus_{\alpha\in\Roots}\Dir_\alpha^{\oplus 2}\,,
\end{equation}
which acts on sections of the bundle
\begin{equation}
(\Exterior^1\oplus\Exterior^0)\otimes\Ad(P)^\CC\iso(\Sb\otimes\underline{\CC^2}\otimes\underline{\t^\CC})\oplus\bigoplus_{\alpha\in\Roots}(\Sb\otimes\underline{\CC^2}\otimes\gb)\,.
\end{equation}

Furthermore, if \(\APhi=\APhisub{\mc}+\aphi\), then
\begin{equation}
\Dir_\APhi-\Dir_\APhisub{\mc}
\end{equation}
will be a bundle endomorphism proportional to \(\aphi\).
\end{proposition}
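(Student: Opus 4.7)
The plan is to tensor the root subbundle decomposition \eqref{eq-ad-bundle-decomposition} with the spinor identification $(\Exterior^1\oplus\Exterior^0)^\CC\iso\Sb\otimes\underline{\CC^2}$ recalled earlier, and then check that both summands of $\Dir_\APhisub{\mc}=\Dir_{A_\mc}-\ad_{\Phi_\mc}$ respect the resulting splitting of $(\Exterior^1\oplus\Exterior^0)\otimes\Ad(P)^\CC$ near infinity. The bundle isomorphism in the statement is then immediate, and the second claim -- that $\Dir_\APhi-\Dir_\APhisub{\mc}$ is a zeroth-order endomorphism proportional to $\aphi$ -- will follow from the affine dependence of the twisted Dirac operator on its connection.

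To verify the decomposition of the operator, I would invoke \cref{cor-model-decomposition}, which states that $A_\mc$ decomposes along the root subbundle decomposition with $\restr{A_\mc}{\gb}$ identified with the homogeneous connection on $\lb{i\alpha(\charge)}$. Because a Dirac operator twisted by a direct sum of connections is the direct sum of twisted Dirac operators, $\Dir_{A_\mc}$ preserves each summand $\Sb\otimes\underline{\CC^2}\otimes\gb$ and restricts to $\Dir_{i\alpha(\charge)}$ on it, with the trivial factor $\underline{\CC^2}$ merely duplicating everything. By the same corollary, $\ad_{\Phi_\mc}$ preserves each root subbundle and acts on $\gb$ as scalar multiplication by the function $\alpha(\mass)-\alpha(\charge)/(2r)$. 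Subtracting recovers $\Dir_\alpha$ from \eqref{eq-linearised-operator-on-subbundle} on each $\Sb\otimes\gb$, and assembling the pieces gives the direct sum formula: the summand $\underline{\t^\CC}$ of complex rank $\rank(G)$ contributes $\rank(G)$ copies of $\Dir_0$, doubled to $2\rank(G)$ by $\underline{\CC^2}$, while each line bundle $\gb$ contributes $\Dir_\alpha^{\oplus 2}$.

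For the second claim, writing $\APhi=\APhisub{\mc}+\aphi$ gives $\Dir_\APhi-\Dir_\APhisub{\mc}=(\Dir_A-\Dir_{A_\mc})-\ad_\phi$. The twisted Dirac operator is affine in the connection, with $\Dir_A-\Dir_{A_\mc}$ acting as Clifford multiplication by $a$ on the $\Sb$ factor tensored with the adjoint action on $\Ad(P)^\CC$; this is exactly the expression $(\operatorname{cl}\otimes\ad)_a$ already identified in the excerpt. Both $(\operatorname{cl}\otimes\ad)_a$ and $\ad_\phi$ are pointwise linear in $a$ and $\phi$ respectively, so their sum is a bundle endomorphism proportional to $\aphi$. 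I do not anticipate a serious obstacle: the proof is essentially bookkeeping, and the only points that warrant care are avoiding double-counting the $\underline{\CC^2}$ factor when tallying multiplicities and keeping straight which tensor factor carries the Clifford action and which carries the adjoint action when combining the two decompositions.
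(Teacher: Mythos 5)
Your proposal is correct and follows essentially the same route as the paper, which establishes this proposition implicitly in the discussion preceding it: the root subbundle decomposition of \((A_\mc,\Phi_\mc)\) from \cref{cor-model-decomposition} combined with the identification \((\Exterior^1\oplus\Exterior^0)^\CC\iso\Sb\otimes\underline{\CC^2}\), and the affine dependence \(\Dir_\APhi=\Dir_\APhisub{\mc}+(\operatorname{cl}\otimes\ad)_a-\ad_\phi\) for the second claim. The multiplicity bookkeeping (\(\rank(G)\) copies from \(\underline{\t^\CC}\) doubled by \(\underline{\CC^2}\), and one copy per root line bundle likewise doubled) is handled exactly as intended.
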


We can notice that the expression for \(\Dir_\alpha\), when \(\alpha(\mass)\neq0\), is like the one needed to apply Callias's index theorem, since it is a Dirac operator plus a skew-Hermitian algebraic term which doesn't degenerate at infinity. However, when \(\alpha(\mass)=0\), this last condition is not satisfied, since the Higgs field tends to \(0\) at infinity.

Therefore, in the next few subsections, we introduce two separate formalisms which are suited to these two circumstances. These are the \emph{b calculus}, which will help us study the case where \(\alpha(\mass)=0\), and the \emph{scattering calculus}, which will provide a convenient rewording of the setup of Callias's index theorem for the case \(\alpha(\mass)\neq0\). We will then see how these two formalisms fit together to study our linearised operator.

\subsection{B and scattering calculuses}\label{subsec-calculuses}

We now introduce the b and scattering calculuses, whose tools and results will be expanded upon in the following sections. We largely follow Kottke's work on Fredholmness and index results for operators of the form exhibited by our linearised operator \cite{Kot15a}, which we aim to apply in our case. More details can be found there, as well as in its references, about both the b calculus \cite{Mel93} and the scattering calculus \cite{Mel94,Kot11}.

The basic setting for these calculuses is a compact manifold with boundary \(K\), together with a \emph{boundary defining function} \(x\), that is, a smooth non-negative function which is \(0\) precisely on the boundary and such that \(\d x\) is never zero on the boundary \(\bdry K\).

\begin{definition}
We define the spaces of \emph{b} and \emph{scattering vector fields} as
\begin{equation}
\vfs_b(K)=\{V\in\vfs(K)\st V\text{ is tangent to }\bdry K\}
\end{equation}
and
\begin{equation}
\vfs_{sc}(K)=\{xV\st V\in\vfs_b(K)\}\,,
\end{equation}
respectively, where \(\vfs(K)\) is the space of vector fields on \(K\).
\end{definition}

These spaces of vector fields can also be regarded as sections of certain bundles over \(K\), the \emph{b} and \emph{scattering tangent bundles}, denoted by \({}^bTK\) and \({}^{sc}TK\). There are natural maps
\begin{equation}
{}^{sc}TK\to{}^bTK\to TK\,,
\end{equation}
which are isomorphisms in the interior of \(K\) (but not on the boundary). Near a point on the boundary, if \(\{y_1,\dots,y_{n-1}\}\) are local coordinates for \(\bdry K\) around this point, then
\begin{equation}
\Bigl\{x\pderiv{}{x},\pderiv{}{y_1},\dots,\pderiv{}{y_{n-1}}\Bigr\}
\end{equation}
and
\begin{equation}
\Bigl\{x^2\pderiv{}{x},x\pderiv{}{y_1},\dots,x\pderiv{}{y_{n-1}}\Bigr\}\,,
\end{equation}
are local frames for \({}^bTK\) and \({}^{sc}TK\), respectively. Sections of the corresponding cotangent bundles (that is, the duals of the tangent bundles), will be the \emph{b} and \emph{scattering \(1\)-forms}. Analogously, near the boundary, these bundles have local frames \(\{\frac{\d x}{x},\d y_1,\dots,\d y_{n-1}\}\) and \(\{\frac{\d x}{x^2},\frac{\d y_1}{x},\dots,\frac{\d y_{n-1}}{x}\}\).

It is important to note that the spaces \(\vfs_b(K)\) and \(\vfs_{sc}(K)\) form Lie algebras with the usual Lie bracket for vector fields, and that, furthermore, we have \([\vfs_b(K),\vfs_{sc}(K)]\subseteq\vfs_{sc}(K)\).

Like with the usual vector fields, we can also define differential operators. In order to do so, assume that \(E\) is a vector bundle on \(K\) with a connection whose covariant derivative is given by \(\nabla\).

\begin{remark}
In general, we will take this vector bundle \(E\) to be complex. However, some of these definitions will also be valid for a real vector bundle, which will be necessary for us to make the correct definitions, as noted in \cref{rem-real-and-complex}. In particular, the definitions of the Sobolev spaces and the spaces of bounded polyhomogeneous sections will admit both real and complex vector bundles.
\end{remark}

\begin{definition}
 We define the spaces of \emph{b} and \emph{scattering differential operators} of order \(k\in\ZZ_{\geq0}\) on \(E\) as
\begin{equation}
\begin{multlined}
\Diff_b^k(E)\\\coloneqq\lspan_{\Gamma(\End(E))}\{\nabla_{V_1}\nabla_{V_2}\cdots\nabla_{V_\ell}\st V_1,V_2,\ldots,V_\ell\in\vfs_b(K), 0\leq\ell\leq k\}
\end{multlined}
\end{equation}
and
\begin{equation}
\begin{multlined}
\Diff_{sc}^k(E)\\\coloneqq\lspan_{\Gamma(\End(E))}\{\nabla_{V_1}\nabla_{V_2}\cdots\nabla_{V_\ell}\st V_1,V_2,\ldots,V_\ell\in\vfs_{sc}(K), 0\leq\ell\leq k\}\,,
\end{multlined}
\end{equation}
respectively, where a composition of \(0\) derivatives is simply taken to mean the identity endomorphism.
\end{definition}

\subsection{B and scattering Sobolev spaces}\label{subsec-sobolev-spaces}

In order to define Sobolev spaces, let us assume that the vector bundle \(E\) carries an inner product. Furthermore, suppose that we have an \emph{exact scattering metric} on \(K\), by which we mean a metric on the interior of \(K\) which can be written as
\begin{equation}
h_{sc}=\frac{\d x^2}{x^4}+\frac{h_{\bdry K}}{x^2}
\end{equation}
near \(\bdry K\), where \(h_{\bdry K}\) restricted to \(\bdry K\) defines a metric on the boundary. This is a metric on the scattering tangent space, and also provides a measure on \(K\).

The definition of Sobolev spaces for the b and scattering calculuses is then analogous to the usual Sobolev spaces. Two additional features will be useful: adding weights to the Sobolev spaces, and combining b and scattering derivatives in the same space. The following definition encompasses these ideas.

\begin{definition}
Let \(\delta\in\RR\), \(k,\ell\in\ZZ_{\geq0}\) and \(p\in[1,\infty]\). We define the Sobolev spaces
\begin{equation}\label{eq-sobolev-space-definition}
\begin{multlined}
x^\delta W_{b,sc}^{k,\ell,p}(E)\\\coloneqq\{u\st x^{-\delta}\D_b\D_{sc}u\in L^p(E),
\forall\D_b\in\Diff_b^k(E), \forall\D_{sc}\in\Diff_{sc}^\ell(E)\}\,.
\end{multlined}
\end{equation}
We also write
\begin{equation}
x^\delta H_{b,sc}^{k,\ell}(E)\coloneqq x^\delta W_{b,sc}^{k,\ell,2}(E)\,.
\end{equation}

When there is only one kind of derivative present, we may omit the corresponding subscript and superscript. The weight and the bundle may also be omitted when trivial.
\end{definition}

\begin{remark}\label{rmk-operator-commutators}
Note that the ordering of the first three terms in the expression \(x^{-\delta}\D_b\D_{sc}u\) in \eqref{eq-sobolev-space-definition} does not matter, as can be shown by looking at the commutators of the appropriate operators.
\end{remark}

\begin{remark}
These are Banach and Hilbert spaces with respect to their natural norms. In fact, we can also define pseudodifferential operators and Sobolev spaces of negative order, and these will be briefly mentioned later on, but we will not give details about it here.
\end{remark}

Now, it will be useful to have some embedding results between these Sobolev spaces.

From the definitions we see that we can exchange b and scattering derivatives, by taking the weighting into account, as follows.

\begin{lemma}
We have
\begin{equation}
W_b^{k,p}(E)\subseteq W_{sc}^{k,p}(E)\subseteq x^{-k}W_b^{k,p}(E)\,.
\end{equation}
\end{lemma}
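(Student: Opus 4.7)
The proof rests on two algebraic inputs from \cref{subsec-calculuses}. First, $\vfs_{sc}(K) = x\vfs_b(K)$, which combined with the $C^\infty$-linearity of connections in the vector field slot gives $\nabla_{xV} = x\nabla_V$ for $V\in\vfs_b(K)$. Second, every $V\in\vfs_b(K)$ is tangent to $\bdry K$, so $V(x)$ vanishes on $\bdry K$ and hence $V(x) = xf$ for some $f\in C^\infty(K)$; equivalently, $[\nabla_V,x] = xf$. A short induction on the order, repeatedly using these two facts together with $[\vfs_b,\vfs_{sc}]\subseteq\vfs_{sc}$, establishes the structural inclusions
\[
x^j\Diff_b^j(E) \subseteq \Diff_{sc}^j(E)\,,\qquad \Diff_{sc}^\ell(E) \subseteq \sum_{m\geq 0}x^m\Diff_b^\ell(E)\,,
\]
the latter being realised, for a given $D_{sc}\in\Diff_{sc}^\ell(E)$, as a finite expansion $D_{sc} = \sum_j x^{m_j}D_b^{(j)}$ obtained by distributing the $x$'s across the constituent derivatives. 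These identities, together with compactness of $K$ (which makes $x$ and all smooth functions on $K$ bounded), drive both inclusions.

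For the first inclusion, given $u\in W_b^{k,p}(E)$ and $D_{sc}\in\Diff_{sc}^\ell(E)$ with $\ell\leq k$, the expansion above yields $D_{sc}u = \sum_j x^{m_j}D_b^{(j)}u$. Each $D_b^{(j)}u\in L^p(E)$ by hypothesis and each $x^{m_j}$ is bounded, so $D_{sc}u\in L^p(E)$ and $u\in W_{sc}^{k,p}(E)$.

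For the second inclusion, we show that $x^k u\in W_b^{k,p}(E)$ whenever $u\in W_{sc}^{k,p}(E)$. Let $D_b\in\Diff_b^k(E)$; expand $D_b(x^k u)$ via Leibniz and note that b-vector fields preserve the order of vanishing at $\bdry K$ (for any $V\in\vfs_b(K)$, $V(x^k) = kx^k f$), so every iterated b-derivative of $x^k$ takes the form $x^k h$ for some smooth $h$. This gives
\[
D_b(x^k u) = \sum_i h_i \cdot x^k D_b^{(j_i)} u\,,
\]
with $h_i$ bounded and $D_b^{(j_i)}\in\Diff_b^{j_i}(E)$ for some $j_i\leq k$. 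By the first structural identity, $x^{j_i}D_b^{(j_i)}\in\Diff_{sc}^{j_i}(E)$, and multiplying by the bounded scalar $x^{k-j_i}$ (absorbed into the endomorphism coefficients) keeps the result inside $\Diff_{sc}^k(E)$. Hence each $x^k D_b^{(j_i)}u\in L^p(E)$ by hypothesis, and boundedness of $h_i$ yields $D_b(x^k u)\in L^p(E)$.

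The main difficulty is essentially bookkeeping: the two structural identities are direct consequences of $\vfs_{sc}=x\vfs_b$ and $V_b(x)\in xC^\infty(K)$, but one must manage the lower-order commutator terms and the $\Gamma(\End(E))$ coefficients carefully so that no spurious negative powers of $x$ appear before compactness of $K$ is invoked to absorb the surplus factors of $x$ in the final $L^p$ bound.
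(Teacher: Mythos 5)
Your argument is correct and is exactly the unpacking of what the paper leaves implicit -- the lemma is stated there as following ``from the definitions,'' and your two structural identities ($x^j\Diff_b^j\subseteq\Diff_{sc}^j$ and the expansion of scattering operators as $x$-weighted b operators, both resting on $\vfs_{sc}=x\vfs_b$ and $V(x)\in xC^\infty(K)$ for $V\in\vfs_b$) are precisely the intended content. No issues.
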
 

Furthermore, we can also consider the usual Sobolev embeddings adapted to our situation.

Consider firstly the interior of the manifold \(K\) with our given scattering metric \(h_{sc}\). The usual Sobolev spaces on this Riemannian manifold are precisely the spaces \(W_{sc}^{k,p}(E)\) that we defined with scattering derivatives.

Furthermore, consider again the interior of \(K\), but with the metric
\begin{equation}
h_b\coloneqq x^2h_{sc}\,.
\end{equation}
Then, the resulting Sobolev spaces on this new Riemannian manifold are precisely the spaces \(x^{-\frac{n}{p}}W_b^{k,p}(E)\) that we defined with b derivatives, where the weight takes into account that the underlying measure is also weighted by \(x^n\).

Let us assume that both Riemannian manifolds described above have bounded geometry (that is, positive injectivity radius and bounds on the curvature tensor and all its derivatives). In particular, this implies that we can obtain Sobolev embedding theorems \cite[Thm.~2.21]{Aub82} for our b and scattering Sobolev spaces -- taking into account the weighted measure on the former.

Clearly, we can also obtain an embedding of a Sobolev space into one with lower weight (all other parameters being equal), but we can also combine this with the Sobolev embeddings to obtain compact embeddings.

This is captured in the following proposition, where \(\subseteq\) is taken to mean that there is a continuous inclusion between the spaces.

\begin{lemma}\label{lem-sobolev-embeddings}
Assume that
\begin{subequations}
\begin{alignat}{1}
k&>k'\,,\\
k-\frac{n}{p}&>k'-\frac{n}{p'}\,,\\
p&\leq p'\,,\\
\delta&\geq\delta'\,.
\end{alignat}
\end{subequations}
Then,
\begin{equation}
x^\delta W_b^{k,p}(E)\subseteq x^{\delta'+\frac{n}{p}-\frac{n}{p'}}W_b^{k',p'}(E)
\end{equation}
and
\begin{equation}
x^\delta W_{sc}^{k,p}(E)\subseteq x^{\delta'}W_{sc}^{k',p'}(E)\,.
\end{equation}
Furthermore, if \(\delta>\delta'\), then the embeddings are compact.
\end{lemma}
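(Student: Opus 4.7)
My plan is to reduce both continuous inclusions to standard Sobolev embeddings on Riemannian manifolds of bounded geometry, introduce the weights via the multiplier $x^{\delta}$, and obtain the compactness by a cutoff/Rellich argument. The reduction is essentially already done by the discussion preceding the statement: $W_{sc}^{k,p}(E)$ is the Sobolev space of the interior of $K$ equipped with $h_{sc}$, and $x^{-n/p}W_b^{k,p}(E)$ is the Sobolev space of the interior of $K$ equipped with $h_b$. Under the assumed bounded geometry, Aubin's Sobolev embedding theorem therefore yields the unweighted inclusions $W_{sc}^{k,p}(E)\subseteq W_{sc}^{k',p'}(E)$ and, after moving the weights $x^{-n/p}$ and $x^{-n/p'}$ across the inclusion, $W_b^{k,p}(E)\subseteq x^{n/p-n/p'}W_b^{k',p'}(E)$.

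For the general weights $\delta\geq\delta'$, I would use that multiplication by $x^{\delta}$ is, essentially by construction, an isomorphism $W^{k,p}\iso x^{\delta}W^{k,p}$ in either calculus. This follows from the Leibniz rule together with the observation that applying any b- or scattering-derivative to $x^{\delta}$ yields (smooth bounded)$\cdot x^{\delta}$, so the commutators of the defining operators in \eqref{eq-sobolev-space-definition} with $x^{\delta}$ are controlled (cf.\ \cref{rmk-operator-commutators}). Applying this isomorphism on both sides of the unweighted embeddings, and using that $x^{\delta-\delta'}$ is bounded on $K$ (whence the trivial inclusion $x^{\delta}W\subseteq x^{\delta'}W$ when $\delta\geq\delta'$), produces both of the stated continuous inclusions.

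For compactness when $\delta>\delta'$, I would use a cutoff argument. Choose smooth $\chi_\epsilon$ with $\chi_\epsilon=1$ on $\{x\geq 2\epsilon\}$ and $\chi_\epsilon=0$ on $\{x\leq\epsilon\}$; since $\chi_\epsilon$ depends only on $x/\epsilon$, both its $\vfs_b(K)$- and $\vfs_{sc}(K)$-derivatives are bounded uniformly in $\epsilon$. For a bounded sequence $\{u_n\}$ in the source, the piece $\chi_\epsilon u_n$ is supported on the fixed compact region $\{x\geq\epsilon\}$, where the b and scattering calculuses coincide with the ordinary calculus and all weights are two-sided bounded; classical Rellich--Kondrachov, applicable by the strict inequalities $k>k'$ and $k-\frac{n}{p}>k'-\frac{n}{p'}$, then extracts a subsequence converging in the target Sobolev norm on that compact region. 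The tail obeys the localization estimate
\begin{equation*}
\|(1-\chi_\epsilon)u_n\|_{x^{\delta'}W^{k',p'}}\lesssim(2\epsilon)^{\delta-\delta'}\|u_n\|_{x^{\delta}W^{k,p}}
\end{equation*}
(and analogously in the b-case, with target weight $\delta'+\frac{n}{p}-\frac{n}{p'}$), obtained by factoring $x^{-\delta'}=x^{\delta-\delta'}x^{-\delta}$, bounding $x^{\delta-\delta'}\leq(2\epsilon)^{\delta-\delta'}$ on the support of $1-\chi_\epsilon$, and invoking the continuous embedding already established. A standard diagonal argument over $\epsilon\to 0$ then yields a subsequence of $\{u_n\}$ convergent in the target.

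The main obstacle is the middle step: checking carefully that $x^{\delta}$ really does act as a bounded multiplier between these hybrid Sobolev spaces, and that the cutoffs $\chi_\epsilon$ have b- and scattering-derivatives bounded uniformly in $\epsilon$. Both follow from the polyhomogeneous structure of $x^{\delta}$ and the way the vector fields $\vfs_b(K)$ and $\vfs_{sc}(K)$ degenerate at $\bdry K$, but the bookkeeping must be handled with some care; the remainder of the argument is a direct application of Aubin's Sobolev embedding and the classical Rellich--Kondrachov theorem.
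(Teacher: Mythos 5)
Your proposal is correct and follows essentially the same route the paper intends: the paper itself only sketches this lemma as a consequence of Aubin's Sobolev embedding theorem on the two bounded-geometry metrics \(h_{sc}\) and \(h_b\) (with the \(x^{n/p}-x^{n/p'}\) weight shift accounting for the weighted measure in the b case), together with the trivial inclusion of a higher weight into a lower one. Your additional details -- the multiplier argument for \(x^{\delta}\) via the commutator observation of \cref{rmk-operator-commutators}, and the cutoff/Rellich/diagonal argument extracting compactness from the strict weight gap \(\delta>\delta'\) -- are the standard way to make that sketch rigorous and are sound.
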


\begin{remark}\label{rmk-sobolev-embeddings}
If we have an embedding
\begin{equation}
x^\delta W_{b,sc}^{k,\ell,p}(E)\subseteq x^{\delta'}W_{b,sc}^{k',\ell',p'}(E)\,,
\end{equation}
then we also have the embedding
\begin{equation}
x^{\delta+\delta''}W_{b,sc}^{k+k'',\ell+\ell'',p}(E)\subseteq x^{\delta'+\delta''}W_{b,sc}^{k'+k'',\ell'+\ell'',p'}(E)\,,
\end{equation}
where \(\delta''\in\RR\) and \(k'',\ell''\in\ZZ_{\geq0}\). This is a consequence of the properties noted in \cref{rmk-operator-commutators}.
\end{remark}

Lastly, we observe that the condition of having bounded geometry also implies the following density property \cite[Thm.~2.8]{Heb96}.

\begin{lemma}\label{lem-smooth-compactly-supported-dense}
If \(p<\infty\), then the space of smooth compactly supported sections is dense in \(x^\delta W_{b,sc}^{k,\ell,p}\).
\end{lemma}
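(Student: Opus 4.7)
The approach is the standard two-step density argument adapted to the hybrid b/sc framework: first truncate the section near \(\bdry K\) to obtain one with compact support in the interior \(K^\circ\), then mollify. Once the support is compact in \(K^\circ\), the three tangent bundles \(TK\), \({}^bTK\), \({}^{sc}TK\) are canonically isomorphic, the weight \(x^\delta\) is bounded above and below on the support, and the hybrid Sobolev norm is locally equivalent to a standard \(W^{k+\ell,p}\) norm, so the usual mollification recipe (partition of unity plus convolution in charts) yields smooth compactly supported approximants, with \(p<\infty\) ensuring convergence in the \(L^p\) norm. The nontrivial content is therefore the cut-off step.

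For that step, I would fix a smooth function \(\chi\colon[0,\infty)\to[0,1]\) with \(\chi\equiv0\) on \([0,1]\) and \(\chi\equiv1\) on \([2,\infty)\), and set \(\chi_R\coloneqq\chi(Rx)\). The critical observation is that \(\chi_R\) is scale-invariant under b-derivatives: since a b-vector field near the boundary is a smooth combination of \(x\partial_x\) and \(\partial_{y_i}\), and \(\chi_R\) depends only on \(x\), one has \((x\partial_x)\chi_R=Rx\chi'(Rx)\), which is uniformly bounded in \(R\) and supported in the shrinking region \(\{1/R\leq x\leq 2/R\}\). Iterated b-derivatives of \(\chi_R\) are therefore uniformly bounded with the same support, while each sc-derivative of \(\chi_R\) (being an extra factor of \(x\) times a b-derivative) contributes a further \(O(x)=O(1/R)\) factor on the support.

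Then for \(u\in x^\delta W_{b,sc}^{k,\ell,p}(E)\) and any \(\D_b\in\Diff_b^k(E)\), \(\D_{sc}\in\Diff_{sc}^\ell(E)\), I would expand \(\D_b\D_{sc}(\chi_R u)\) by the Leibniz rule. The term in which no derivative lands on \(\chi_R\) is \(\chi_R\D_b\D_{sc}u\), which converges to \(\D_b\D_{sc}u\) in \(x^\delta L^p\) by dominated convergence as \(R\to\infty\); this is precisely where \(p<\infty\) enters. Every remaining term is a product of some \(Bu\) with a scalar factor that is bounded (or \(O(1/R)\)) and vanishes outside \(\{1/R\leq x\leq 2/R\}\), where \(B\) has at most \(k\) b-derivatives and \(\ell\) sc-derivatives and hence \(x^{-\delta}Bu\in L^p\); since the characteristic function of the support of \(\nabla\chi_R\) tends to zero almost everywhere, dominated convergence again forces vanishing in \(L^p\). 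Thus \(\chi_R u\to u\) in \(x^\delta W_{b,sc}^{k,\ell,p}\), and combining with the mollification of each \(\chi_R u\) via a diagonal extraction completes the proof.

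The main obstacle, or the point that requires the most care, is the bookkeeping in the Leibniz expansion: one must verify that after any number of derivatives have landed on \(\chi_R\), what remains acting on \(u\) can still be identified with an operator whose separate b- and sc-orders do not exceed \(k\) and \(\ell\), so that \(x^{-\delta}Bu\) lies in \(L^p\). This is automatic because each individual derivative in \(\D_b\D_{sc}\) is either absorbed by \(\chi_R\) or handed to \(u\), preserving the separate counts; the commutator relations noted in \cref{rmk-operator-commutators} ensure that any reordering needed to write the result in this form produces only lower-order scattering terms, which contribute additional vanishing errors.
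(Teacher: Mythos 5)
Your proof is correct, but it takes a more self-contained route than the paper, which does not prove the lemma directly at all: it simply observes that the b and scattering metrics on the radial compactification have bounded geometry and cites the standard density theorem for Sobolev spaces on complete manifolds with bounded geometry (Hebey, Thm.~2.8). Your argument instead works directly with the boundary defining function: the key point, which you identify correctly, is that \(\chi(Rx)\) is a \emph{uniformly} b-bounded family of cutoffs because \((x\partial_x)^j\chi(Rx)=\bigl((t\partial_t)^j\chi\bigr)(Rx)\) is bounded independently of \(R\) and supported in \(\{1/R\leq x\leq 2/R\}\), so no logarithmic cutoff is needed; the Leibniz bookkeeping is also sound, since each term leaves on \(u\) an order-preserved subword of \(V_{1}\cdots V_{k}W_{1}\cdots W_{\ell}\), which is already of the form \(\D_b''\D_{sc}'\) with \(\D_b''\in\Diff_b^{k}\) and \(\D_{sc}'\in\Diff_{sc}^{\ell}\) (so in fact no commutator argument is even required there). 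What your version buys is that it literally covers the weighted hybrid spaces \(x^\delta W_{b,sc}^{k,\ell,p}\) with mixed b and scattering derivatives and an arbitrary weight, which the cited theorem only covers after an adaptation the paper leaves implicit; what the paper's citation buys is brevity and the reuse of bounded geometry, which it needs anyway for the Sobolev embeddings.
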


\subsection{Polyhomogeneous expansions}\label{subsec-polyhomogeneous}

Another important collection of spaces are those of \emph{polyhomogeneous} sections.

To define them, we consider \emph{index sets} \(\mathcal{I}\subset \CC\times \ZZ_{\geq0}\) which are discrete, and satisfy
\begin{equation}
\lvert\{(\lambda,\nu)\in\mathcal{I}\st\Re\lambda\leq k\}\rvert<\infty,\qquad\forall k\in\ZZ_{\geq0}\,,
\end{equation}
and
\begin{equation}
(\lambda,\nu)\in\mathcal{I}\implies(\lambda+j_1,\nu-j_2)\in\mathcal{I},\quad\forall{j_1}\in\ZZ_{\geq0},\forall j_2\in\{0,1,\dots,\nu\}\,.
\end{equation}
Then, we say that a section of a vector bundle \(E\), on \(K\) has a \emph{polyhomogeneous expansion} with index set \(\mathcal{I}\) if \(u\) is asymptotic to the sum
\begin{equation}
\sum_{(\lambda,\nu)\in\mathcal{I}}x^\lambda\log(x)^\nu u_{\lambda,\nu}
\end{equation}
at \(\bdry K\), for some choice of sections \(u_{\lambda,\nu}\) of \(E\) which are smooth up to the boundary. Here, by asymptotic we mean that, for any \(k\in\ZZ_{\geq0}\),
\begin{equation}
u-\sum_{\substack{(\lambda,\nu)\in\mathcal{I}\\\Re\lambda\leq k}}x^\lambda\log(x)^\nu u_{\lambda,\nu}
\end{equation}
is \(k\) times differentiable, and it and its first \(k\) derivatives vanish to order \(x^k\) at the boundary.

We will be mostly concerned with a specific subset of these.

\begin{definition}
Let \(\delta\in\RR_{\geq0}\). Then, we define the space
\begin{equation}
\mathscr{B}^\delta(E)
\end{equation}
of sections of \(E\) which are \emph{bounded polyhomogeneous of order \(x^\delta\)} as the space of polyhomogeneous sections whose index sets satisfy \(\mathcal{I}\subset((\delta,\infty)\times\ZZ_{\geq0})\cup\{(\delta,0)\}\).

Furthermore, \(\mathscr{B}^\infty(E)\) will denote the space of sections which vanish with all derivatives to infinite order.

If a section is in \(\mathscr{B}^0(E)\), we simply say it is \emph{bounded polyhomogeneous}.
\end{definition}

It is important to note that b operators preserve the order of bounded polyhomogeneous sections, and hence scattering derivatives increase the order by \(1\).

\subsection{Fredholm theory}\label{subsec-fredholm}

In the b and scattering calculuses there are results which allow us to prove that certain operators are Fredholm and to compute their index. We will briefly summarise them, since the main notions will reappear when we combine both calculuses to study our operator.

In the case of the scattering calculus, the relevant notion is Callias's index theorem \cite{Cal78,Kot11}. Suppose that \(K\) is odd dimensional, and that we have an operator \(\D+\Psiop\), where \(\D\) is a Dirac operator for the scattering metric and \(\Psiop\) is an algebraic, skew-Hermitian term which is non-degenerate on the boundary of \(K\) and commutes with the Clifford action on the bundle \(E\). Then, the operator is Fredholm as a map
\begin{equation}
\D+\Psiop\colon x^\delta H_{sc}^k(E)\to x^\delta H_{sc}^{k-1}(E)
\end{equation}
for any \(\delta\) and \(k\).

To find its index (which is independent of \(\delta\) and \(k\)), consider the restriction of \(E\) to \(\bdry K\) and the subbundle \(E_+\) given by the positive imaginary eigenspaces of the endomorphism \(\restrop{\Psiop}{\bdry K}\). This, in turns, splits as \(E_+=E_+^+\oplus E_+^-\) as the \(\pm1\) eigenspaces of \(i\operatorname{cl}(x^2\pderiv{}{x})\). If \(\slashed{\del}_+^+\) denotes the Dirac operator mapping \(E_+^+\) to \(E_+^-\) induced by \(\D\), then
\begin{equation}
\ind(\D+\Psiop)=\ind(\slashed{\del}_+^+)\,.
\end{equation}
Furthermore, any element in the kernel of this operator will be in \(\mathscr{B}^\infty\).

In the b calculus, the situation is a bit more involved.

Suppose we have an elliptic operator \(\D\) of order \(k\) which, near the boundary, can be written as
\begin{equation}
\D=\sum_{j+\lvert\beta\rvert\leq k}b_{j,\beta}(x,y)\Bigl(x\pderiv{}{x}\Bigr)^j\Bigl(\pderiv{}{y}\Bigr)^\beta\,,
\end{equation}
where \(y\) represents coordinates on the boundary \(\bdry K\) and \(\beta\) is a multi-index. Restricting it to the boundary produces the \emph{indicial operator}
\begin{equation}
I(D)=\sum_{j+\lvert\beta\rvert\leq k}b_{j,\beta}(0,y)\Bigl(z\pderiv{}{z}\Bigr)^j\Bigl(\pderiv{}{y}\Bigr)^\beta\,,
\end{equation}
which is a differential operator on the inward-pointing normal bundle to \(\bdry K\) inside \({}^bTK\). This bundle is generated by \(x\pderiv{}{x}\) and its fibres are parametrised by \(z\geq0\), and it can be thought of as modelling \(K\) near its boundary.

From this we obtain a family of operators on the boundary given, for a parameter \(\lambda\in\CC\), by
\begin{equation}
I(\D,\lambda)=\sum_{j+\lvert\beta\rvert\leq k}b_{j,\beta}(0,y)\lambda^j\Bigl(\pderiv{}{y}\Bigr)^\beta\,.
\end{equation}
These operators will be elliptic on \(\bdry K\), and they will give us information about the operator at the boundary. In particular, define the \emph{b spectrum} of the operator \(\D\) as
\begin{equation}
\spec_b(\D)=\{\lambda\in\CC\st I(\D,\lambda)\text{ is not invertible}\}\,,
\end{equation}
which is a discrete set. We call the real parts of its elements \emph{indicial roots}.

For a value \(\lambda\in\spec_b(\D)\), elements \(u\in\Null(I(\D,\lambda))\) represent sections in the kernel of the indicial operator \(I(D)\) of the form \(z^\lambda u\). In fact, we can also define the order \(\ord(\lambda)\) of \(\lambda\), representing the existence of sections in this kernel of the form
\begin{equation}
z^\lambda\sum_{\nu=0}^{\ord(\lambda)-1}\log(z)^\nu u_\nu\,,
\end{equation}
which make up the \emph{formal nullspace} at \(\lambda\). In our case, this order will always be \(1\), so we will not go into more details.

In this setting, the operator is Fredholm as a map
\begin{equation}
\D_\delta\colon x^{\delta-\frac{n}{2}}H_b^{\ell}(E)\to x^{\delta-\frac{n}{2}}H_b^{\ell-k}(E)
\end{equation}
as long as \(\delta\) is not an indicial root, that is, \(\delta\notin\Re\spec_b(\D)\).

The index of the operator might change depending on the weight of the Sobolev spaces. However, there are two properties which can be useful for its computation:
\begin{equation}
\D\text{ self-adjoint}\implies\ind(\D_\delta)=-\ind(\D_{-\delta})\,,
\end{equation}
and, if \([\delta_0-\epsilon,\delta_0+\epsilon]\cap\Re\spec_b(\D)=\{\delta_0\}\), then
\begin{equation}
\ind(\D_{\delta_0-\epsilon})=\ind(\D_{\delta_0+\epsilon})+\sum_{\Re\lambda=\delta_0}\ord(\lambda)\cdot\dim\Null(I(\D,\lambda))\,.
\end{equation}

Lastly, when the spectrum is real, elements in the kernel of \(\D_\delta\) (when \(\delta\) is not an indicial root) will be bounded polyhomogeneous of order \(x^{\lambda_1}\log(x)^{\ord(\lambda_1)-1}\), where \(\lambda_1\) is the smallest indicial root bigger than \(\delta\); in particular, they will be in \(\scrB^{\lambda_1}\) if \(\ord(\lambda_1)=1\).

\subsection{Hybrid spaces}

Let us now return to the study of the linearised operator from \cref{subsec-linearised-operator}.

Firstly, in order to view our base manifold, the Euclidean space \(\RR^3\), as a compact manifold in the sense needed for the b and scattering calculuses, we consider the \emph{radial compactification}.

Topologically, this identifies \(\RR^3\) with the interior of a \(3\)-ball, whose closure provides the compact manifold \(K\). Its boundary is a \(2\)-sphere, which we will refer to as \emph{the sphere at infinity}. We then obtain a boundary defining function by taking
\begin{equation}
x=\frac{1}{r}
\end{equation}
near infinity and smoothing over the origin.

A crucial observation is that the Euclidean metric on \(\RR^3\) is precisely a scattering metric on the radial compactification, since, away from the origin, it can be written as
\begin{equation}
\d r^2+r^2h_{S^2}=\frac{\d x^2}{x^4}+\frac{h_{S^2}}{x^2}\,,
\end{equation}
where \(h_{S^2}\) is the metric on the unit \(2\)-sphere. In particular, this metric has bounded geometry. Furthermore, the corresponding b metric -- the Euclidean metric weighted by \(x^2\), which is equal to \(\frac{1}{r^2}\) near infinity -- is isometric to a cylinder near infinity and hence also has bounded geometry. Therefore, we can apply the properties described in \cref{subsec-sobolev-spaces}.

Let us now recall the form of our linearised operator for the model monopole on each root subbundle, given by \eqref{eq-linearised-operator-on-subbundle}.

On root subbundles \(\gb\) for which \(\alpha(\mass)\neq0\) it has precisely the form required to apply the Fredholm theory for scattering operators, but on the root subbundles for which \(\alpha(\mass)=0\) the action of the Higgs field degenerates. However, in the latter case, let us consider the operator \(x^{-1}\Dir_\alpha\), which near infinity is simply
\begin{equation}
x^{-1}\Dir_{i\alpha(\charge)}+\frac{i\alpha(\charge)}{2}\,.
\end{equation}
Since the Dirac operator can be written in terms of scattering derivatives (with no algebraic term), the operator \(x^{-1}\Dir_{i\alpha(\charge)}\) is a b operator. Furthermore, the action of the Higgs field is bounded, so \(x^{-1}\Dir_\alpha\) is also a b operator.

This means that we have to treat root subbundles differently depending on whether \(\alpha(\mass)\) is \(0\) or not. Hence, let us start by defining, near infinity, the subbundles
\begin{subequations}\label{eq-b-and-sc-subbundles-real}
\begin{alignat}{1}
\Ad(P)_C&\coloneqq\ker(\ad_{\underline{\mass}})\,,\\
\Ad(P)_{C^\perp}&\coloneqq\ker(\ad_{\underline{\mass}})^\perp\,,
\end{alignat}
\end{subequations}
of the real bundle \(\Ad(P)\) -- where \(C\) refers to the centraliser of \(\mass\) in \(\g\). It is important to note that these definitions indeed determine subbundles, due to the definition of the mass element \(\underline{\mass}\), whose adjoint action must have a constant rank near infinity. Their complexifications are the subbundles
\begin{subequations}\label{eq-b-and-sc-subbundles-complex}
\begin{alignat}{1}
\Ad(P)_C^\CC&=\underline{\t^\CC}\oplus\bigoplus_{\substack{\alpha\in\Roots\\\alpha(\mass)=0}}\gb\,,\\
\Ad(P)_{C^\perp}^\CC&=\bigoplus_{\substack{\alpha\in\Roots\\\alpha(\mass)\neq0}}\gb\,,
\end{alignat}
\end{subequations}
of \(\Ad(P)^\CC\).

Then, the operator near infinity will look like a weighted b operator along the first subbundle and like a scattering operator along the second one.

With that in mind, we make the following definition, where we are further allowing the construction to depend on a parameter \(s\in\ZZ_{\geq1}\) which will add regularity (in the form of b derivatives) to the configurations we consider. This will remain essentially fixed for most of the rest of this work, and only plays a minor role in some proofs. We will then see in \cref{pro-independence-of-s} that the results do not ultimately depend on this choice.

\begin{definition}
We define
\begin{equation}
\begin{alignedat}{3}
\calH^{\delta_0,\delta_1,s,k}_E&\coloneqq\{u{}\st{}&\operatorname{\Pi}\chi u&\in x^{\delta_0}H_b^{s+k}(E\otimes\Ad(P)_C),\\
&&\mathop{(1-\operatorname{\Pi})}\nolimits\chi u&\in x^{\delta_1}H_{b,sc}^{s,k}(E\otimes\Ad(P)_{C^\perp}),\\
&&(1-\chi)u&\in H_c^{s+k}(E\otimes\Ad(P))&\}\,,
\end{alignedat}
\end{equation}
where \(\operatorname{\Pi}\) is the orthogonal projection onto \(\Ad(P)_{C}\), \(\chi\) is a smooth cutoff function which is \(0\) on the unit ball and \(1\) outside a larger ball, and \(H_c^\ph\) denotes the corresponding Sobolev space of compactly supported functions.

When the bundle \(E\) is just an exterior bundle \(\Exterior^j\), we will simply write the subscript \(j\), and when the bundle is \(\Exterior^1\oplus\Exterior^0\), we will omit the subscript altogether. That is,
\begin{alignat}{1}
\calH^{\delta_0,\delta_1,s,k}_j&\coloneqq\calH^{\delta_0,\delta_1,s,k}_{\Exterior^j}\,,\\
\calH^{\delta_0,\delta_1,s,k}&\coloneqq\calH^{\delta_0,\delta_1,s,k}_{\Exterior^1\oplus\Exterior^0}\,.
\end{alignat}

Furthermore, we will centre our attention on these spaces for very specific parameters. In particular, we define
\begin{equation}
\scrH^{s,k}_{E}\coloneqq\calH^{1-k,1,s,k}_{E},\qquad k=0,1,2\,,
\end{equation}
following the same notation for subscripts:
\begin{alignat}{2}
\scrH^{s,k}_j&\coloneqq\scrH^{s,k}_{\Exterior^j},&&\qquad k=0,1,2\,,\\
\scrH^{s,k}&\coloneqq\scrH^{s,k}_{\Exterior^1\oplus\Exterior^0},&&\qquad k=0,1,2\,.
\end{alignat}
\end{definition}

Note the difference between \(\scrH^{s,k}_E\) and \(\scrH^{s,k-1}_E\): the subbundle corresponding to the centraliser of \(\mass\) loses one b derivative and its weight increases by \(1\), whereas the subbundle corresponding to the orthogonal complement loses one scattering derivative while its weight remains the same. This is exactly how we expect our linearised operator to act on each of these subbundles.

Another good indication that these spaces are well suited to our situation is the following result.

\begin{lemma}\label{lem-derivatives-in-hybrid-spaces}
The maps
\begin{equation}
\d_{A_\mc}\colon\scrH^{s,k}_j\to\scrH^{s,k-1}_{j+1}
\end{equation}
and
\begin{equation}
\ad_{\Phi_\mc}\colon\scrH^{s,k}_E\to\scrH^{s,k-1}_E
\end{equation}
are continuous for \(k\in\{1,2\}\).
\end{lemma}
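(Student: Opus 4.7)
The plan is to cut the sections at the support of \(\chi\): the piece \((1-\chi)u\) is compactly supported, where all coefficients are smooth and continuity of \(\d_{A_\mc}\) and \(\ad_{\Phi_\mc}\) as maps between ordinary Sobolev spaces is immediate. The content lies in the \(\chi u\) piece near infinity. There \(\d_{A_\mc}\) and \(\ad_{\Phi_\mc}\) preserve the splittings \eqref{eq-ad-bundle-decomposition} and \eqref{eq-b-and-sc-subbundles-complex}, so I would reduce to estimating each operator separately on each \(\gb\), with the centraliser pieces sitting in the b-Sobolev part of the hybrid space and the \(\Ad(P)_{C^\perp}\) pieces in the scattering part.

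For \(\ad_{\Phi_\mc}\), \cref{cor-model-decomposition} identifies the operator on \(\gb\) with multiplication by the scalar \(\alpha(\mass)-\frac{\alpha(\charge)}{2r}\). On the \(\Ad(P)_{C^\perp}\) pieces (\(\alpha(\mass)\neq 0\)) this scalar lies in \(\scrB^0\), so multiplication is a continuous endomorphism of \(x^1H_{b,sc}^{s,k}\), which then embeds into \(x^1H_{b,sc}^{s,k-1}\) via \cref{lem-sobolev-embeddings} and \cref{rmk-sobolev-embeddings}. On the centraliser pieces (\(\alpha(\mass)=0\), together with the trivial \(\underline{\t^\CC}\) factor on which the scalar vanishes) the scalar is in \(\scrB^1\), so multiplication sends \(x^{1-k}H_b^{s+k}\) into \(x^{2-k}H_b^{s+k}\subseteq x^{2-k}H_b^{s+k-1}\), exactly matching the centraliser piece of \(\scrH^{s,k-1}_E\).

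For \(\d_{A_\mc}=\d+[A_\mc,\cdot]\), the \(\Ad(P)_{C^\perp}\) case is straightforward: by \cref{cor-model-decomposition} the restriction of \(A_\mc\) to \(\gb\) is the homogeneous connection on \(\lb{i\alpha(\charge)}\) whose curvature is bounded in scattering norm, so \(\d_{A_\mc}\) is a first-order scattering differential operator with bounded polyhomogeneous coefficients and continuously sends \(x^1H_{b,sc}^{s,k}\) to \(x^1H_{b,sc}^{s,k-1}\). The centraliser case is the delicate one, since one wants the entire operator to land in b-Sobolev spaces with a weight shift of \(+1\). The key calculation is that the built-in identification of \(\Exterior^j\) with the scattering exterior bundle converts the b-frame basis \(\{\frac{\d x}{x},\d y_i\}\) to the scattering basis \(\{\frac{\d x}{x^2},\frac{\d y_i}{x}\}\) through factors of \(x\); concretely, for a scalar \(u\in x^{1-k}H_b^{s+k}\), one has \(\d u=x^2\pderiv{u}{x}\cdot\frac{\d x}{x^2}+x\pderiv{u}{y_i}\cdot\frac{\d y_i}{x}\), whose scattering-frame components \(x\cdot(x\pderiv{}{x})u\) and \(x\cdot\pderiv{}{y_i}u\) lie in \(x^{2-k}H_b^{s+k-1}\), and the analogous calculation works for higher-degree \(j\)-forms. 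The connection term \([A_\mc,\cdot]\) on \(\gb\) is scalar multiplication by the \(\CC\)-valued connection 1-form of \(\lb{i\alpha(\charge)}\), which can be chosen bounded in the b frame (since its exterior derivative, the curvature, is bounded as a b 2-form) and hence is an \(O(x)\) scattering 1-form, contributing the same weight shift. Combining, this gives the desired continuity on the centraliser subbundles.

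The main technical obstacle, and the step I would write out most carefully, is this weight shift by \(+1\) for \(\d_{A_\mc}\) on the centraliser pieces. It does not reflect any decay of the operator itself but rather the mismatch between the b frame used for the Sobolev spaces on the centraliser piece and the scattering frame built into the inner product on \(\Exterior^j\), and one needs to track the calculation uniformly across the form degree \(j\) and for both terms of \(\d_{A_\mc}\) in order to land in the correct hybrid target space.
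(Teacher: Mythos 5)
Your proposal is correct and follows essentially the same route as the paper: the paper likewise handles \(\ad_{\Phi_\mc}\) via \cref{cor-model-decomposition} (mass term bounded on \(\Ad(P)_{C^\perp}\), vanishing on \(\Ad(P)_C\), charge term of order \(x\)) and handles \(\d_{A_\mc}\) by noting it is a first-order scattering operator while \(x^{-1}\d_{A_\mc}\) is a first-order b operator, applying these facts to \(\Ad(P)_{C^\perp}\) and \(\Ad(P)_C\) respectively. Your explicit frame computation is simply the unpacked justification of the latter fact, which the paper asserts directly.
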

\begin{proof}
For the operator \(\d_{A_\mc}\), we first note that it is a scattering differential operator of order \(1\). This means that we have the continuous map
\begin{equation}
\d_{A_\mc}\colon x^\delta H_{b,sc}^{k,\ell}(\Exterior^j)\to x^\delta H_{b,sc}^{k,\ell-1}(\Exterior^{j+1})\,.
\end{equation}
However, \(x^{-1}\d_{A_\mc}\) is a b operator of order \(1\), so the map
\begin{equation}
\d_{A_\mc}\colon x^\delta H_{b,sc}^{k,\ell}(\Exterior^j)\to x^{\delta+1}H_{b,sc}^{k-1,\ell}(\Exterior^{j+1})
\end{equation}
is also continuous. We apply these two facts to the subbundles \(\Ad(P)_{C^\perp}\) and \(\Ad(P)_{C}\), respectively.

For the operator \(\ad_{\Phi_\mc}\), we use \cref{cor-model-decomposition}. On \(\Ad(P)_{C^\perp}\), the mass term is a constant along the decomposition, so multiplying by it preserves the Sobolev space we find ourselves in. The charge term is a constant weighted by \(x\), which also preserves the space. On \(\Ad(P)_C\), however, the mass term vanishes, so we can increase the weight by \(1\). In both cases, we can then remove one derivative from the respective Sobolev spaces to obtain maps like the above.

In both cases we are relying on the fact that the connection and Higgs field are smooth near the origin, and hence they locally act between the appropriate spaces.
\end{proof}

The specific weights chosen will be important later on for several reasons. Firstly, the index of the operator will depend on the choice of weights. Secondly, we need to make sure that products of elements in these spaces preserve the appropriate properties. The most important of these, which will be used throughout, are in the following lemma.

\begin{lemma}\label{lem-multiplications}
The maps
\begin{alignat}{3}
[\ph,\ph]\colon{}&\scrH^{s,2}_0&{}\times{}&\scrH^{s,1}_0&{}\to{}&\scrH^{s,1}_0\,,\label{eq-multiplication-1}\\
[\ph,\ph]\colon{}&\scrH^{s,2}_0&{}\times{}&\scrH^{s,0}_0&{}\to{}&\scrH^{s,0}_0\,,\label{eq-multiplication-2}\\
[\ph,\ph]\colon{}&\scrH^{s,1}_0&{}\times{}&\scrH^{s,1}_0&{}\to{}&\scrH^{s,0}_0\,,\label{eq-multiplication-3}\\
[\ph,\ph]\colon{}&\scrH^{s,2}_0&{}\times{}&\scrH^{s,2}_0&{}\to{}&\scrH^{s,2}_0\,,\label{eq-multiplication-4}
\end{alignat}
and
\begin{equation}\label{eq-multiplication-5}
[\ph,\ph]\colon\calH^{0,1,s,1}_0\times\calH^{0,1,s,1}_0\to\calH^{\frac{5}{4},\frac{5}{4},s,1}_0
\end{equation}
given by the adjoint action on \(\Ad(P)\) are continuous.

Furthermore, in the first three cases, if we fix an element of the second space, the map is compact from the first space to the codomain.
\end{lemma}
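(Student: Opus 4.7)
The plan is to reduce the continuity assertions to a case-by-case analysis using the cutoff \(\chi\) appearing in the definition of \(\calH^{\delta_0,\delta_1,s,k}_E\). Away from infinity, each hybrid space collapses to an ordinary compactly supported Sobolev space \(H_c^{s+k}\), and since \(s\geq 1\) implies \(s+k>3/2\) for \(k\geq 1\), the classical Sobolev multiplication theorem (Hölder combined with \(H^{s+k}\hookrightarrow L^\infty\)) controls the bracket on this region and, together with Rellich--Kondrachov compactness, handles the compactness when one factor is fixed. Near infinity I pass to the root subbundle decomposition \eqref{eq-ad-bundle-decomposition} and use \([\gb,\gb[\beta]]\subseteq\gb[\alpha+\beta]\), reading \(\underline{\t^\CC}\) as \(\gb[0]\). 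Depending on whether \(\alpha(\mass)\), \(\beta(\mass)\) and \((\alpha+\beta)(\mass)\) vanish, the bracket of two summands lands either in \(\Ad(P)_C\) or in \(\Ad(P)_{C^\perp}\), determining which target norm needs to be controlled.

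The main analytical inputs are the Sobolev embeddings of \cref{lem-sobolev-embeddings,rmk-sobolev-embeddings}. The key feature in dimension three is that the b embedding takes the form \(x^\delta H_b^{s+k}\subseteq x^{\delta+3/2}L^\infty\) whenever \(s+k>3/2\), producing a gain of \(x^{3/2}\) in weight coming from the mismatch between the b and scattering measures; the scattering embedding \(x^\delta H_{b,sc}^{s,\ell}\subseteq x^\delta L^\infty\) preserves the weight. With one factor placed in a weighted \(L^\infty\) space via these embeddings, the usual Moser-type argument of Hölder and Leibniz gives the product estimate, exchanging b for scattering derivatives at the cost of factors of \(x^{\pm 1}\) exactly as in the proof of \cref{lem-derivatives-in-hybrid-spaces}. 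In \eqref{eq-multiplication-1}--\eqref{eq-multiplication-4} the first factor has the higher regularity and plays the role of the \(L^\infty\) factor, and its attendant \(x^{3/2}\) gain absorbs the source weight deficit on the \(\Ad(P)_C\)-side; in \eqref{eq-multiplication-5}, where neither factor has extra regularity, the target weight \(\tfrac{5}{4}\) is strictly less than \(0+0+\tfrac{3}{2}\), leaving enough room for the estimate to close.

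For the compactness assertion in \eqref{eq-multiplication-1}--\eqref{eq-multiplication-3}, I fix \(v\) in the second space and approximate it in norm by smooth compactly supported sections \(v_n\) using \cref{lem-smooth-compactly-supported-dense} applied to each piece of the hybrid decomposition of \(v\). The bilinear continuity just established implies that the operators \(M_{v_n}=[\,\cdot\,,v_n]\) converge in operator norm to \(M_v=[\,\cdot\,,v]\), so it suffices to show each \(M_{v_n}\) is compact. Since \(v_n\) is smooth and compactly supported, \(M_{v_n}(u)=[u,v_n]\) is compactly supported and inherits the full regularity \(s+k_1\) of \(u\); the standard Rellich--Kondrachov theorem on a bounded domain (equivalently, the compact-embedding part of \cref{lem-sobolev-embeddings}) then gives compactness into the target Sobolev space of regularity \(s+k_3\), since \(k_1>k_3\) in each of the three cases. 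A norm limit of compact operators is compact.

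The main obstacle will be the systematic bookkeeping of weights and derivative counts across the b/scattering and \(C/C^\perp\) splits. The subcase I expect to be most delicate is the one in which both source factors lie in \(\Ad(P)_{C^\perp}\) but their bracket has a nontrivial \(\Ad(P)_C\) component -- this occurs for \(\beta=-\alpha\) with \(\alpha(\mass)\neq 0\), so that scattering-type control of the source must cover a b-type target weight. Here it is the combination of the source weight \(x^1\) on the \(C^\perp\)-side with the \(x^{3/2}\) gain of the b Sobolev embedding that makes the estimate close with weight better than \(x^0\), matching the \(\Ad(P)_C\)-summand target weight.
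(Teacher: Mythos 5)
Your proposal is correct and, for the continuity assertions, follows essentially the same route as the paper: localise near infinity, split along the adjoint-bundle decomposition, and close the estimates with Hölder's inequality and the weighted Sobolev embeddings of \cref{lem-sobolev-embeddings}. Two organisational differences are worth recording. First, the paper does not pass to the full root decomposition: it uses only \([\Ad(P)_C,\Ad(P)_C]\subseteq\Ad(P)_C\) together with the fact that the \(C^\perp\) source conditions imply the \(C\) ones (e.g.\ \(xH_{b,sc}^{s,1}\subseteq H_b^{s+1}\), \(xH_{b,sc}^{s,2}\subseteq x^{-1}H_b^{s+2}\)), which collapses each item to two or three pointwise multiplication estimates; your bookkeeping over pairs of roots is correct but does more work than necessary. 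Second, your compactness argument is genuinely different: you approximate the fixed second factor by smooth compactly supported sections via \cref{lem-smooth-compactly-supported-dense}, note that multiplication by such a section is compact by Rellich on its support, and pass to an operator-norm limit, whereas the paper factors the multiplication through a compact embedding of the \emph{first} factor, \(x^{-1}H_b^{2}\subsetc x^{-\frac12}W_b^{1,4}\subseteq L^\infty\). Both are valid; yours avoids having to exhibit a compactly embedded intermediate space, the paper's is shorter. One caveat on \eqref{eq-multiplication-5}: your justification ``\(\tfrac54<0+0+\tfrac32\)'' only covers the \(\Ad(P)_C\times\Ad(P)_C\) contribution. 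In the \(\Ad(P)_{C^\perp}\times\Ad(P)_{C^\perp}\to\Ad(P)_C\) contribution the top-order Leibniz term places all \(s+1\) b-derivatives on one factor, which then lies only in \(x^0L^2\) (one of those derivatives is merely a scattering derivative), so the undifferentiated factor must sit in \(x^{\delta}L^\infty\) with \(\delta\geq\tfrac54\); the weight-preserving scattering embedding you invoke gives only \(xL^\infty\), which is not enough. The estimate is rescued either by first embedding both \(C^\perp\) sources into \(H_b^{s+1}\) and quoting the pure-b case, or by the chain \(xH_{b,sc}^{1,1}\subseteq xW_b^{1,4}\subseteq x^{\frac74}L^\infty\) -- both available from \cref{lem-sobolev-embeddings} -- so this is an easily repaired oversight rather than a flaw in the approach, but it is not covered by the two \(L^\infty\) embeddings you single out as the key inputs.
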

\begin{proof}
These follow from the properties laid out in \cref{subsec-sobolev-spaces} combined with Hölder's inequality. Let us illustrate this by summarising the proof for some of the maps.

Firstly we look at \eqref{eq-multiplication-1}. To simplify notation, we take \(0\) instead of \(s\) (although, as stated before, we will need to assume \(s\geq1\) for other proofs), so the map we are interested in becomes
\begin{equation}
[\ph,\ph]\colon\calH^{-1,1,0,2}_0\times\calH^{0,1,0,1}_0\to\calH^{0,1,0,1}_0\,.
\end{equation}
Now, we note that
\begin{equation}
[\Ad(P)_C,\Ad(P)_C]\subseteq\Ad(P)_C\,.
\end{equation}
Furthermore, the asymptotic conditions are stronger on the subbundle \(\Ad(P)_{C^\perp}\) than on the subbundle \(\Ad(P)_C\) (and the regularity conditions are the same). Therefore, it will suffice to prove the multiplication properties for the pointwise multiplication maps
\begin{subequations}
\begin{alignat}{3}
\ph\cdot\ph{}\colon{}&x^{-1}H_b^2&{}\times{}&H_b^1&{}\to{}&H_b^1\,,\label{eq-multiplication-1-1}\\
\ph\cdot\ph\colon{}&x^{-1}H_b^2&{}\times{}&xH_{sc}^1&{}\to{}&xH_{sc}^1\,,\label{eq-multiplication-1-2}\\
\ph\cdot\ph\colon{}&xH_{sc}^2&{}\times{}&H_b^1&{}\to{}&xH_{sc}^1\,,\label{eq-multiplication-1-3}
\end{alignat}
\end{subequations}
since these are the spaces that determine the asymptotic conditions along the relevant subbundle combinations.

To prove \eqref{eq-multiplication-1-1}, we first see that if \(u\) and \(v\) are smooth and compactly supported, then
\begin{equation}
\begin{multlined}
\lVert uv\rVert_{H_b^1}\\
\begin{alignedat}{3}
&\preccurlyeq\lVert x^{-1}\d(uv)\rVert_{L^2(\Exterior^1)}&&&&+\lVert uv\rVert_{L^2}\\
&\preccurlyeq\lVert x^{-1}(\d u)v)\rVert_{L^2(\Exterior^1)}&&+\lVert ux^{-1}\d v\rVert_{L^2(\Exterior^1)}&&+\lVert uv\rVert_{L^2}\\
&\preccurlyeq\lVert x^{-1}\d u\rVert_{x^{-\frac{1}{2}}L^4(\Exterior^1)}\lVert v\rVert_{x^{\frac{1}{2}}L^4}&&+\lVert u\rVert_{L^\infty}\rVert x^{-1}\d v\rVert_{L^2(\Exterior^1)}&&+\lVert u\rVert_{L^\infty}\lVert v\rVert_{L^2}\\
&\preccurlyeq\lVert u\rVert_{x^{-\frac{1}{2}}W_b^{1,4}}\lVert v\rVert_{x^{\frac{1}{2}}L^4}&&+\lVert u\rVert_{L^\infty}\rVert v\rVert_{H_b^1}&&+\lVert u\rVert_{L^\infty}\lVert v\rVert_{L^2}\,,
\end{alignedat}
\end{multlined}
\end{equation}
where the relation \(\preccurlyeq\) denotes that there is an inequality if we multiply the right-hand side by a positive constant which does not depend on \(u\) or \(v\). Note that we need to use \(x^{-1}\d\) instead of \(\d\) to account for the b derivatives, since the Euclidean metric is a scattering metric. Now, from \cref{lem-sobolev-embeddings} and Hölder's inequality we deduce that
\begin{equation}
x^{-1}H_b^2\subsetc x^{-\frac{1}{2}}W_b^{1,4}\subseteq L^\infty\,,
\end{equation}
where \(\subsetc\) denotes a compact embedding, and that
\begin{equation}
H_b^1\subseteq x^{\frac{1}{2}}L^4,L^2\,.
\end{equation}
This implies the continuity and compactness properties of the multiplication map.

For \eqref{eq-multiplication-1-2} and \eqref{eq-multiplication-1-3} we can apply a similar procedure to see that
\begin{alignat}{1}
\lVert uv\rVert_{xH_{sc}^1}&\preccurlyeq\lVert u\rVert_{x^{-\frac{1}{2}}W_{sc}^{1,4}}\lVert v\rVert_{x^{\frac{1}{2}}L^4}+\lVert u\rVert_{L^\infty}\lVert v\rVert_{xH_{sc}^1}+\lVert u\rVert_{L^\infty}\lVert v\rVert_{L^2}\,,\\
\lVert uv\rVert_{xH_{sc}^1}&\preccurlyeq\lVert u\rVert_{x^{\frac{1}{2}}W_{sc}^{1,4}}\lVert v\rVert_{x^{\frac{1}{2}}L^4}+\lVert u\rVert_{L^\infty}\lVert v\rVert_{H_b^1}+\lVert u\rVert_{x^{\frac{1}{2}}L^4}\lVert v\rVert_{x^{\frac{1}{2}}L^4}\,.
\end{alignat}
Taking into account the previous embeddings together with
\begin{alignat}{1}
xH_{sc}^1&\subset x^{\frac{1}{2}}L^4,L^2\,,\\
xH_{sc}^2&\subsetc x^{\frac{1}{2}}W_{sc}^{1,4}\subseteq x^{\frac{1}{2}}L^4,L^\infty\,,
\end{alignat}
completes the proof.

If we want to account for other values of \(s\) we can simply add \(s\) b derivatives to all the spaces involved, since Hölder's inequality will still hold, and so will the embeddings applied.

The proofs for the maps \eqref{eq-multiplication-2} to \eqref{eq-multiplication-4} follow a similar procedure.

The proof of \eqref{eq-multiplication-5} is slightly different in that it relies on \(s\geq1\). To demonstrate it, let us therefore take \(s=1\) and observe, similarly to above, that it reduces to proving the continuity of the maps
\begin{subequations}
\begin{alignat}{3}
\ph\cdot\ph{}\colon{}&H_b^2&{}\times{}&H_b^2&{}\to{}&x^{\frac{5}{4}}H_b^2\,,\\
\ph\cdot\ph\colon{}&H_b^2&{}\times{}&xH_{b,sc}^{1,1}&{}\to{}&x^{\frac{5}{4}}H_{b,sc}^{1,1}\,.
\end{alignat}
\end{subequations}
The continuity of the first map follows from the inequality
\begin{equation}
\begin{multlined}
\lVert uv\rVert_{x^{\frac{5}{4}}H_b^2}\\
\begin{alignedat}{1}
&\preccurlyeq\lVert u\rVert_{H_b^2}\lVert v\rVert_{x^{\frac{5}{4}}L^\infty}+\lVert u\rVert_{x^{\frac{5}{8}}W_b^{1,4}}\lVert v\rVert_{x^{\frac{5}{8}}W_b^{1,4}}+\lVert u\rVert_{x^{\frac{5}{4}}L^\infty}\lVert v\rVert_{H_b^2}\\
&\phantom{{}\preccurlyeq{}}+\lVert u\rVert_{x^{\frac{5}{8}}W_b^{1,4}}\lVert v\rVert_{x^{\frac{5}{8}}L^4}+\lVert u\rVert_{x^{\frac{5}{8}}L^4}\lVert v\rVert_{x^{\frac{5}{8}}W_b^{1,4}}+\lVert u\rVert_{x^{\frac{5}{8}}L^4}\lVert v\rVert_{x^{\frac{5}{8}}L^4}
\end{alignedat}
\end{multlined}
\end{equation}
and the embeddings
\begin{equation}
H_b^2\subseteq x^{\frac{5}{8}}W_b^{1,4}\subseteq x^{\frac{5}{4}}L^\infty,x^{\frac{5}{8}}L^4\,,
\end{equation}
whereas the continuity of the second follows from
\begin{equation}
\begin{alignedat}{1}
\lVert uv\rVert_{x^{\frac{5}{4}}H_{b,sc}^{1,1}}&\preccurlyeq\lVert u\rVert_{H_b^2}\lVert v\rVert_{x^{\frac{1}{4}}L^\infty}+\lVert u\rVert_{x^{-\frac{3}{4}}W_b^{1,4}}\lVert v\rVert_{xW_b^{1,4}}+\lVert u\rVert_{x^{\frac{1}{4}}L^\infty}\lVert v\rVert_{xH_{b,sc}^{1,1}}\\
&\phantom{{}\preccurlyeq{}}+\lVert u\rVert_{x^{\frac{1}{4}}W_b^{1,4}}\lVert v\rVert_{xL^4}+\lVert u\rVert_{x^{\frac{1}{4}}L^4}\lVert v\rVert_{xW_b^{1,4}}+\lVert u\rVert_{x^{\frac{1}{4}}L^4}\lVert v\rVert_{xL^4}
\end{alignedat}
\end{equation}
taking into account, additionally, the embeddings
\begin{equation}
xH_{b,sc}^{1,1}\subseteq xW_b^{1,4}\subseteq x^{\frac{1}{4}}L^\infty,xL^4\,.
\end{equation}
\end{proof}

Naturally, the spaces \(\scrH^{s,k}_0\) can be substituted by \(\scrH^{s,k}_E\) in the above lemma when appropriate, and similarly for \(\calH^{\delta_0,\delta_1,s,k}_0\) and \(\calH^{\delta_0,\delta_1,s,k}_E\).

Lastly, we will also consider spaces of bounded polyhomogeneous sections with different orders on different subbundles. The only relevant one for us is
\begin{equation}
\scrB^{\delta_0,\delta_1}\,,
\end{equation}
which will denote bounded polyhomogeneous sections of \((\Exterior^1\oplus\Exterior^0)\oplus\Ad(P)\) which are of orders \(x^{\delta_0}\) and \(x^{\delta_1}\) in the subbundles corresponding to \(\Ad(P)_C\) and \(\Ad(P)_{C^\perp}\), respectively. Multiplication properties for such spaces are more straightforward.

\subsection{Moduli space setup}

Of particular interest are the spaces
\begin{alignat}{1}
\scrH^{s,2}_0&=\calH^{-1,1,s,2}_{\Exterior^0}\,,\\
\scrH^{s,1}&=\calH^{0,1,s,1}_{\Exterior^1\oplus\Exterior^0}\,,\\
\scrH^{s,0}_1&=\calH^{1,1,s,0}_{\Exterior^1}\,,
\end{alignat}
which will be used to define the setup of the moduli space of framed monopoles for our mass and charge suggested in \cref{subsec-framed}.

We start with the configuration space.

\begin{definition}
The \emph{configuration space of framed pairs of mass \(\mass\) and charge \(\charge\)} is defined as
\begin{equation}
\C_\mc^s\coloneqq\APhisub{\mc}+\scrH^{s,1}\,.
\end{equation}

The Bogomolny map restricted to the configuration space \(\C_\mc\) is denoted as
\begin{equation}
\Bog_\mc^s\coloneqq\restrop{\Bog}{\C_\mc^s}\,.
\end{equation}
\end{definition}

For the group of gauge transformations the aim is to model its Lie algebra on the Sobolev space \(\scrH^{s,2}_0\). Since the group of gauge transformations itself is not a vector (or affine) space, its definition is slightly more involved.

In order to build it, we consider the group \(G\) as a compact subgroup of a space of matrices, and construct the bundle \(E^{\operatorname{Mat}}\) over \(\RR^3\) which is associated to \(P\) through the conjugation action of \(G\) on this space of matrices. Since the conjugation action respects matrix multiplication, this will yield a bundle of algebras. Furthermore, since the bundle \(\Aut(P)\) can be constructed as the bundle associated to \(P\) through the conjugation action of \(G\) on itself, the bundle \(E^{\operatorname{Mat}}\) will contain \(\Aut(P)\) as a subbundle. By the same reasoning, it will also contain \(\Ad(P)\) as a subbundle. Lastly, we observe that near infinity we can decompose this bundle \(E^{\operatorname{Mat}}\) in the same way as the adjoint bundle by considering the subbundle \(E^{\operatorname{Mat}}_C\) that commutes with \(\underline{\mass}\) and its orthogonal complement \(E^{\operatorname{Mat}}_{C^\perp}\) (with respect to any metric which extends the metric on the adjoint bundle). This can be used to define a Sobolev space \(\scrH^{s,2}(E^{\operatorname{Mat}})\) in a way analogous to the definitions for the bundle \(\Ad(P)\).

\begin{definition}
The \emph{group of (small) gauge transformations for mass \(\mass\) and charge \(\charge\)} is defined as the group of sections
\begin{equation}
\G_\mc^s\coloneqq\{g\in\underline{1_G}+\scrH^{s,2}(E^{\operatorname{Mat}})\st\text{\(g\) takes values in \(\Aut(P)\)}\}\,,
\end{equation}
and its Lie algebra is denoted by
\begin{equation}
\GLie_\mc^s\coloneqq\operatorname{Lie}(\G_\mc^s)\,.
\end{equation}
\end{definition}

We can see that these definitions provide an adequate setup by applying the properties of the hybrid Sobolev spaces involved.

\begin{proposition}
The gauge group \(\G_\mc^s\) is a well-defined Lie group whose Lie algebra satisfies
\begin{equation}
\GLie_\mc^s=\scrH^{s,2}_0\,.
\end{equation}
This group acts smoothly on the configuration space \(\C_\mc^s\), and the Bogomolny map \(\Bog_\mc^s\) is smooth as a map
\begin{equation}
\Bog_\mc^s\colon\C_\mc^s\to\scrH^{s,0}_1\,.
\end{equation}

Furthermore, if \(\APhi\in\C_\mc^s\), then the maps
\begin{equation}
\d_A\colon\scrH^{s,k}_j\to\scrH^{s,k-1}_{j+1}
\end{equation}
and
\begin{equation}
\ad_\Phi\colon\scrH^{s,k}_E\to\scrH^{s,k-1}_E
\end{equation}
are continuous for \(k\in\{1,2\}\), and so is the linearised operator as a map
\begin{equation}
\Dir_\APhi\colon\scrH^{s,1}\to\scrH^{s,0}\,.
\end{equation}
\end{proposition}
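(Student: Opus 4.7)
The plan is to reduce every claim to the model pair $\APhisub{\mc}$ and treat the difference $\aphi\in\scrH^{s,1}$ perturbatively, using the multiplication estimates of \cref{lem-multiplications}. I would first prove the continuity statements for $\d_A$, $\ad_\Phi$ and $\Dir_\APhi$, then use them to get smoothness of $\Bog_\mc^s$, and finally handle the Lie group structure of $\G_\mc^s$ together with the smoothness of its action.

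For $\d_A$ and $\ad_\Phi$, write $A=A_\mc+a$ and $\Phi=\Phi_\mc+\phi$ with $\aphi\in\scrH^{s,1}$, so that $\d_A=\d_{A_\mc}+\ad_a$ and $\ad_\Phi=\ad_{\Phi_\mc}+\ad_\phi$. The summands involving the model data are continuous between the claimed spaces by \cref{lem-derivatives-in-hybrid-spaces}; the continuity of $\ad_a$ and $\ad_\phi$ between the corresponding hybrid spaces is exactly the content of \eqref{eq-multiplication-1}--\eqref{eq-multiplication-3} (the exterior-form factor just appears as a fixed finite-dimensional tensor factor and does not affect the estimates). Assembling these yields the continuity of $\Dir_\APhi=\Dir_\APhisub{\mc}+(\operatorname{cl}\otimes\ad)_a-\ad_\phi$ as a map $\scrH^{s,1}\to\scrH^{s,0}$.

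For the Bogomolny map, I would expand
\begin{equation*}
\Bog(A_\mc+a,\Phi_\mc+\phi)=\Bog(A_\mc,\Phi_\mc)+(\d\Bog)_{\APhisub{\mc}}(a,\phi)+\tfrac{1}{2}\hs[a\wedge a]-\ad_a\phi\,,
\end{equation*}
which exhibits $\Bog_\mc^s$ as a polynomial of degree two in $\aphi$. The constant term is smooth and vanishes outside a compact set (the model solves Bogomolny near infinity), hence lies in $\scrH^{s,0}_1$. The linear part maps $\scrH^{s,1}\to\scrH^{s,0}_1$ by the preceding step, and the quadratic terms do so by the same variants of \eqref{eq-multiplication-3}. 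Since all coefficients are continuous multilinear maps, $\Bog_\mc^s$ is automatically smooth.

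The hardest part is the Lie group structure of $\G_\mc^s$. The analytic heart of the matter is that $\scrH^{s,2}(E^{\operatorname{Mat}})$ is a Banach algebra under pointwise matrix multiplication: for $s\geq1$ both of its asymptotic pieces $x^{-1}H_b^{s+2}$ and $xH_{b,sc}^{s,2}$ embed into $L^\infty$ by \cref{lem-sobolev-embeddings}, and the product estimate is the matrix-valued analogue of \eqref{eq-multiplication-4} with essentially the same proof. Hence $\underline{1_G}+\scrH^{s,2}(E^{\operatorname{Mat}})$ is closed under multiplication and, near the identity, under inversion via a Neumann series, providing a Banach manifold neighbourhood of $\underline{1_G}$. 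The pointwise constraint of taking values in $\Aut(P)\subset E^{\operatorname{Mat}}$ cuts out a smooth Banach submanifold whose tangent space at $\underline{1_G}$ is the subspace of sections valued pointwise in $\Ad(P)$, which is precisely $\scrH^{s,2}_0$; a chart at the identity is given by the convergent exponential series $X\mapsto e^X$, which lands in $\G_\mc^s$ because $\Aut(P)$ is the fibrewise exponential image of $\Ad(P)$. Finally, the smoothness of the action on $\C_\mc^s$, given fibrewise by $(g,(A,\Phi))\mapsto(gAg^{-1}-(\d g)g^{-1},g\Phi g^{-1})$, reduces via further applications of the multiplication lemma and the continuity of $\d$ to checking that every polynomial term in $u=g-\underline{1_G}$, $\aphi$ and $\d u$ lies in $\scrH^{s,1}$.
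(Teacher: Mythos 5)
Your proposal is correct and follows essentially the same route as the paper: the model-plus-perturbation decomposition handled by \cref{lem-derivatives-in-hybrid-spaces} and the multiplication maps \eqref{eq-multiplication-1}--\eqref{eq-multiplication-3}, and the Banach-algebra property \eqref{eq-multiplication-4} together with the fibrewise exponential map to give \(\G_\mc^s\) its manifold structure. The only (cosmetic) difference is that you build a chart at the identity and translate, whereas the paper applies the implicit function theorem directly around an arbitrary section using the exponentially-defined transverse component.
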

\begin{proof}
The space \(\G_\mc^s\) inside \(\scrH^{s,2}(E^{\operatorname{Mat}})\) can be seen to be a submanifold by applying the implicit function theorem locally around a given section \(g\in\G_\mc^s\). The function whose zero locus determines the group of gauge transformations simply takes sections to their components which are transverse to \(\Aut(P)\) inside \(E^{\operatorname{Mat}}\). Locally, this transverse part can be defined using the exponential map. Group multiplication is smooth and internal due to the properties of the map \eqref{eq-multiplication-4}.

Its Lie algebra is simply the space of sections of \(E^{\operatorname{Mat}}\) with the same asymptotic conditions but lying inside the bundle \(\Ad(P)\).

The rest of the properties are a straightforward application of \cref{lem-derivatives-in-hybrid-spaces} and the continuity of the maps \eqref{eq-multiplication-1} to \eqref{eq-multiplication-3}.
\end{proof}

We can now define the moduli space which, as pointed out before, will be seen to be independent of \(s\) in \cref{pro-independence-of-s}.

\begin{definition}
The \emph{moduli space of framed monopoles of mass \(\mass\) and charge \(\charge\)} is defined as
\begin{equation}
\fMod_\mc^s\coloneqq(\Bog_\mc^s)^{-1}(0)/\G_\mc^s\,.
\end{equation}
\end{definition}

An important feature of this setup is that we can perform integration by parts between \(\scrH^{s,2}_0\) and \(\scrH^{s,1}_0\).

\begin{lemma}\label{lem-integration-by-parts}
The \(L^2\) pairings on the pairs of spaces \(\scrH^{s,2}_0\times\scrH^{s,0}_0\) and \({\scrH^{s,1}_0\times\scrH^{s,1}_0}\) are continuous. Hence, we can perform integration by parts between elements of \(\scrH^{s,2}_0\) and \(\scrH^{s,1}_0\) with any connection \(A\) in (the first factor of) the configuration space \(\C_\mc\).
\end{lemma}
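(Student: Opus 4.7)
The plan is to split the argument into two parts. First, bound the two $L^2$ pairings directly using the weights built into the hybrid spaces; then extend the integration by parts identity from smooth compactly supported sections by continuity, using the mapping properties of $d_A$ and $d_A^*$ already in hand.

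For the continuity of the pairings, I would decompose each element along the asymptotic splitting $\Ad(P)=\Ad(P)_C\oplus\Ad(P)_{C^\perp}$ (after multiplying by the cutoff $\chi$) and check the weights summand by summand against the Euclidean (scattering) measure. For $\scrH^{s,2}_0\times\scrH^{s,0}_0$, on $\Ad(P)_C$ the weights are $-1$ and $+1$, so rewriting $\int\langle u,v\rangle=\int\langle xu,v/x\rangle$ and applying Cauchy--Schwarz in the unweighted $L^2$ bounds the pairing by the product of the norms; on $\Ad(P)_{C^\perp}$ both weights are $+1$ and, since $x$ is bounded on the radial compactification, both factors embed into unweighted $L^2$ and Cauchy--Schwarz applies directly. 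For $\scrH^{s,1}_0\times\scrH^{s,1}_0$, on $\Ad(P)_C$ both weights vanish (so the components lie in $L^2$), and on $\Ad(P)_{C^\perp}$ both are $+1$ and the same boundedness of $x$ handles them. The compactly supported interior region contributes a routine $L^2$ pairing near the origin.

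For the integration by parts identity $\int\langle d_AX,Y\rangle=\int\langle X,d_A^*Y\rangle$ with $X\in\scrH^{s,2}_0$ and $Y$ in the appropriate $1$-form companion space, the strategy is to approximate $X$ and $Y$ by smooth compactly supported sections $X_n, Y_n$, for which the identity reduces to standard integration by parts on $\RR^3$ with no boundary contribution. Both sides depend continuously on $(X,Y)$: the operators $d_A$ and $d_A^*$ are continuous between the relevant $\scrH$ spaces for any $A\in\C_\mc^s$, which combines the model estimates from \cref{lem-derivatives-in-hybrid-spaces} with the multiplication bounds of \cref{lem-multiplications} applied to the difference $a=A-A_\mc\in\scrH^{s,1}_1$; and the resulting $L^2$ pairings are continuous by the first step. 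Passing to the limit along the approximating sequences yields the identity.

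The main obstacle I expect is justifying density of smooth compactly supported sections in the hybrid spaces, since the subbundle decomposition $\Ad(P)_C\oplus\Ad(P)_{C^\perp}$ is only defined away from the origin. The remedy is to use the cutoff $\chi$ from the definition of $\calH^{\delta_0,\delta_1,s,k}$ to split each section into a compactly supported part near the origin (in a standard Sobolev space on a compact set) and components along each of $\Ad(P)_C$ and $\Ad(P)_{C^\perp}$ near infinity (each in a plain $x^\delta H_{b,sc}^{s,k}$ space), then apply \cref{lem-smooth-compactly-supported-dense} to each summand separately and recombine; since the decomposition respects the hybrid norm by definition, the sum of approximants converges to the original section. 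Once density is established, the passage to the limit is a routine application of the continuity estimates from the first step together with the continuity of $d_A$ and $d_A^*$.
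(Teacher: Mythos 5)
Your proposal is correct and takes essentially the same route as the paper: the pairings are continuous because the weights on each subbundle multiply to give (at worst) an unweighted \(L^2\) pairing -- equivalently, \(\scrH^{s,k}_0\subseteq x^{1-k}L^2\) -- and the integration-by-parts identity then extends from smooth compactly supported sections by density (\cref{lem-smooth-compactly-supported-dense}) and continuity of the functional \((u,v)\mapsto\langle\d_Au,v\rangle_{L^2}+\langle u,\d_A^*v\rangle_{L^2}\). Your extra care about density in the hybrid spaces (splitting with the cutoff and approximating each summand) and about the continuity of \(\d_A\) for a general \(A\in\C_\mc^s\) fills in details the paper leaves implicit, but does not change the argument.
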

\begin{proof}
The continuity of the pairings can be easily seen because \(\scrH^{s,k}_0\) is inside \(x^{1-k}L^2\).

These pairings imply that we can perform integration by parts, since the functional
\begin{equation}
(u,v)\mapsto\langle\d_Au,v\rangle_{L^2}+\langle u,\d_Av\rangle_{L^2}
\end{equation}
is continuous for \((u,v)\in\scrH^{s,2}_0\times\scrH^{s,1}_0\) and zero for smooth, compactly supported elements, which, as seen in \cref{lem-smooth-compactly-supported-dense}, are dense.
\end{proof}

Once again, the spaces \(\scrH^{s,k}_0\) can be substituted by \(\scrH^{s,k}_E\) in this lemma when appropriate.

\section{The linearised problem}\label{sec-linearised}

With the analytical setup of the previous section, we now aim to study the linearised operator in more detail. In particular, we want to prove that it is Fredholm and surjective. This will rely on the results in Kottke's work \cite{Kot15a}, which studies operators on hybrid Sobolev spaces.

\subsection{Fredholmness and index}

As we saw, along the subbundles \(\Ad(P)_C\) and \(\Ad(P)_{C^\perp}\) of the adjoint bundle given by \eqref{eq-b-and-sc-subbundles-real}, the linearised operator resembles b and scattering Fredholm operators, respectively. As it turns out, we will be able to put both approaches together to prove that the entire operator is Fredholm.

For the computation of the index it will, in fact, be useful to look at a family of related operators. This family will connect our operator with another one which is self-adjoint in the relevant sense, for which the computation of the index is simplified. The family of operators will be defined by modifying the Higgs field. We initially consider these operators as acting on complex spaces, although we will formulate \cref{thm-linearised-operator-fredholm-index} in terms of the real operator that we are interested in, whose relevant properties can be deduced from its complexification as discussed in \cref{rem-real-and-complex}.

Let \(\APhi=\APhisub{\mc}+\aphi\in\C_\mc^s\). Recalling \eqref{eq-model-higgs-field}, we have
\begin{equation}
\Phi=\underline{\mass}-\frac{1}{2r}\underline{\charge}+\phi\,,
\end{equation}
where the constant sections \(\underline{\mass},\underline{\charge}\in\Gamma(\underline{\t})\) are smoothed out near the origin. Then, for a given parameter \(t\in\RR\), we define
\begin{equation}
\Phi^{(t)}\coloneqq\underline{\mass}-\frac{t}{2r}\underline{\charge}+t\phi\,.
\end{equation}
Now, by looking at the resulting family of operators \(\Dir_{(A,\Phi^{(t)})}\), for \(t\in[0,1]\), we will be able to compute the index. For \(t=1\) this is the linearised operator we are interested in, whereas for \(t=0\) the b part of the operator will be self-adjoint, which will help in the computation. By keeping the mass term for every \(t\) we guarantee that the scattering part of the operator remains non-degenerate.

In order to apply Kottke's Fredholmness and index results, let us establish some relevant notation. We write
\begin{subequations}
\begin{alignat}{1}
\D^{(t)}&\coloneqq\Dir_{(A,\Phi^{(t)}-\underline{\mass})}\,,\\
\Psiop&\coloneqq-\ad_{\underline{\mass}}\,,
\end{alignat}
\end{subequations}
so that \(\Dir_{(A,\Phi^{(t)})}=\D^{(t)}+\Psiop\). This acts on sections of \((\Exterior^1\oplus\Exterior^0)\otimes\Ad(P)^\CC\), which, near infinity, decomposes as
\begin{equation}\label{eq-b-and-sc-associated-subbundles}
((\Exterior^1\oplus\Exterior^0)\otimes\Ad(P)_{C}^\CC)\oplus((\Exterior^1\oplus\Exterior^0)\otimes\Ad(P)_{C^\perp}^\CC)\,.
\end{equation}
With respect to this splitting, we write
\begin{equation}
D^{(t)}\coloneqq\dmat{\D^{(t)}_{00}&\D^{(t)}_{01}\\\D^{(t)}_{10}&\D^{(t)}_{11}}\,,
\end{equation}
and we also write \(\tD^{(t)}_{00}=x^{-2}\D^{(t)}_{00}x\). Then, \(\tD^{(t)}_{00}\) represents the b part of the operator, and hence we can define \(I(\tD^{(t)}_{00},\lambda)\), whereas \(\D^{(t)}_{11}+\Psiop\) represents the scattering part, and hence we can define the operator \(\slashed{\del}_+^+\) associated to it -- in both cases following \cref{subsec-fredholm}. Note that we need to multiply \(\D^{(t)}_{00}\) by \(x^{-1}\) in order to make it a b operator. The extra conjugation by \(x^{-1}\) will simplify some notation by shifting the b spectrum of the operator.

If, furthermore, the configuration pair \(\APhi\) is bounded polyhomogeneous (by which we mean that \(\APhi-\APhisub{\mc}\) is), then the operator satisfies the necessary properties to apply Kottke's results. More specifically, we observe the following properties, which follow from the definitions and results laid out above.
\begin{itemize}
\item \(\D^{(t)}\) is a Dirac operator with respect to the Euclidean metric on \(\RR^3\), plus an algebraic term of order \(x\).
\item Near infinity, \(\Psiop\) commutes with the Clifford action, is skew-Hermitian and has constant rank, and the first term of the splitting \eqref{eq-b-and-sc-associated-subbundles} is the kernel of \(\Psiop\), which also preserves the second term.
\item The connection \(A_\mc\) preserves the above splitting, and \(a\) is of order \(x^{\frac{3}{2}}\).
\end{itemize}

These properties imply the conditions (C1--5) in Section 2 of Kottke's article \cite{Kot15a}, so we can apply the results from this work to compute the index by computing the indices of the scattering and b parts and adding them. The latter contribution will be referred to as the \emph{defect}.

For simplicity, we assume here that the elements in the b spectrum \(\spec_b(\tD^{(t)}_{00})\) are real and of order \(1\). This can be deduced from the proof of \cref{thm-linearised-operator-fredholm-index}.

\begin{lemma}\label{lem-conditions-for-index-theorem}
Let the pair \(\APhi=\APhisub{\mc}+\aphi\in\C_\mc^s\) be bounded polyhomogeneous, let \(t\in\RR\), and let \(\Dir_{(A,\Phi^{(t)})}=\D^{(t)}+\Psiop\) be as above. Then, if \(\delta\in\RR\setminus\spec_b(\tD^{(t)}_{00})\) the operator
\begin{equation}
\Dir_{(A,\Phi^{(t)})}\colon(\calH^{\delta-\frac{1}{2},\delta+\frac{1}{2},s,1})^\CC\to(\calH^{\delta+\frac{1}{2},\delta+\frac{1}{2},s,0})^\CC
\end{equation}
is Fredholm.

Furthermore, its index is given by
\begin{equation}
\ind(\Dir_{(A,\Phi^{(t)})})=\ind(\slashed{\del}_+^+)+\deft(\Dir_{(A,\Phi^{(t)})},\delta)\,,
\end{equation}
where the defect \(\deft(\Dir_{(A,\Phi^{(t)})},\delta)\in\ZZ\) is locally constant in \(\delta\) on \(\RR\setminus\spec_b(\tD^{(t)}_{00})\) and satisfies
\begin{equation}
\deft(\Dir_{(A,\Phi^{(t)})},\delta)=-\deft(\Dir_{(A,\Phi^{(t)})},-\delta)\\
\end{equation}
when \(t=0\), and
\begin{equation}
\deft(\Dir_{(A,\Phi^{(t)})},\lambda_0-\epsilon)=\deft(\Dir_{(A,\Phi^{(t)})},\lambda_0+\epsilon)+\dim\Null(I(\tD^{(t)}_{00},\lambda_0))
\end{equation}
when \([\lambda_0-\epsilon,\lambda_0+\epsilon]\cap\spec_b(\tD^{(t)}_{00})=\{\lambda_0\}\).

Lastly, elements in the kernel of this operator will be in \((\mathscr{B}^{1+\lambda_1,2+\lambda_1})^\CC\), where \(\lambda_1\) is the smallest indicial root of \(\tD^{(t)}_{00}\) bigger than \(\delta\).
\end{lemma}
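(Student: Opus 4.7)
The plan is to verify that the operator $\Dir_{(A,\Phi^{(t)})}$ satisfies the hypotheses (C1--5) of Kottke's framework \cite{Kot15a} and then invoke his Fredholmness and index-decomposition theorems directly; the contents of the lemma then become transcriptions of his conclusions into our weight and bundle conventions. First I would verify the three structural conditions stated immediately before the lemma. Writing $\Phi^{(t)} - \underline{\mass} = -(t/2r)\underline{\charge} + t\phi$, the summand $\D^{(t)} = \Dir_A - \ad_{\Phi^{(t)} - \underline{\mass}}$ is the Euclidean Dirac operator twisted by $A$ plus an algebraic perturbation: the $r^{-1}$ piece is visibly polyhomogeneous of order $x$, while the contribution of $\phi$ is polyhomogeneous of order at least $x^{3/2}$ by the standing polyhomogeneity assumption together with the embeddings of $\scrH^{s,1}$ supplied by \cref{lem-sobolev-embeddings}. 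Next, $\Psiop = -\ad_{\underline{\mass}}$ is purely algebraic on the bundle factor and trivial on the spinor and form factors, so it commutes with Clifford multiplication; $\Ad$-invariance of the Killing form gives skew-Hermiticity; and the identifications \eqref{eq-b-and-sc-subbundles-complex} imply that $\Psiop$ has constant rank near infinity with kernel $\Ad(P)_C^\CC$, preserving the splitting \eqref{eq-b-and-sc-associated-subbundles}. The asymptotic model connection $A_\mc$ respects the root subbundles by \cref{cor-model-decomposition}, hence also the coarser splitting.

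With (C1--5) in place, Kottke's Fredholmness theorem yields that $\Dir_{(A,\Phi^{(t)})}$ is Fredholm on the stated hybrid spaces whenever $\delta \notin \spec_b(\tD^{(t)}_{00})$; the offsets of $\pm 1/2$ between the $C$ and $C^\perp$ weight components and the loss of one derivative between domain and codomain reflect exactly the conjugation $\tD^{(t)}_{00} = x^{-2}\D^{(t)}_{00}x$ which is required to turn the $C$ component into a genuine b operator in dimension $3$. The index decomposition $\ind(\Dir_{(A,\Phi^{(t)})}) = \ind(\slashed{\del}_+^+) + \deft(\Dir_{(A,\Phi^{(t)})},\delta)$ is then Kottke's index theorem, with the scattering symbol at infinity on the $\Psiop$-non-kernel subbundle contributing the Callias-type term and the b symbol on $\Ad(P)_C^\CC$ contributing the defect. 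Local constancy of the defect on the complement of $\spec_b(\tD^{(t)}_{00})$ is Fredholm stability; the jump formula across a real indicial root is the standard b relative-index formula recorded in \cref{subsec-fredholm}; and at $t = 0$ the charge and perturbation terms drop out of $\tD^{(0)}_{00}$, leaving a formally self-adjoint twisted Dirac operator, which forces the antisymmetry of the defect via the identity $\ind(\D_\delta) = -\ind(\D_{-\delta})$ from the same subsection. Polyhomogeneity of kernel elements is b-elliptic regularity combined with the Callias-type regularity on $\Ad(P)_{C^\perp}^\CC$ (where kernel elements actually lie in $\scrB^\infty$), the shift between the exponents $1 + \lambda_1$ and $2 + \lambda_1$ again reflecting the conjugation that converts $\D^{(t)}_{00}$ into $\tD^{(t)}_{00}$.

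The main obstacle I expect is the careful bookkeeping of weight conventions, rather than any single analytical step. Kottke's b theory is typically phrased with $x^{-n/2}$-shifted weights, while our hybrid spaces use a convention that absorbs this shift into the conjugation $\tD^{(t)}_{00} = x^{-2}\D^{(t)}_{00}x$; checking that this conjugation does produce a genuine b operator on $\Ad(P)_C^\CC$ whose indicial family controls the Fredholm threshold of the full hybrid operator, and that the resulting shifts align so that $\delta \notin \spec_b(\tD^{(t)}_{00})$ is exactly the Fredholm condition for the whole operator, is the most delicate part of the translation.
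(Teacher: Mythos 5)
Your proposal is correct and follows essentially the same route as the paper: the text immediately preceding the lemma records exactly the structural properties you verify (Dirac operator plus algebraic term of order \(x\); \(\Psiop\) skew-Hermitian, Clifford-commuting, of constant rank with kernel \(\Ad(P)_C^\CC\); the splitting preserved by \(A_\mc\) with perturbation of order \(x^{\frac{3}{2}}\)), and the proof then consists of citing Kottke's Theorems 2.4 and 3.6 together with the self-adjointness of \(\tD^{(0)}_{00}\), which is precisely your argument. Your additional remarks on the weight bookkeeping and the conjugation \(\tD^{(t)}_{00}=x^{-2}\D^{(t)}_{00}x\) are a faithful elaboration of what the paper leaves implicit.
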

\begin{proof}
This follows from Theorems 2.4 and 3.6 of Kottke's work \cite{Kot15a} and the observation that \(\tD^{(0)}_{00}\) is self-adjoint.
\end{proof}

This allows us to compute the index of our operator. The formula \eqref{eq-index} deduced here is discussed in more detail in \cref{subsec-the-moduli-space}, where in particular we see that it is a multiple of \(4\).

\begin{theorem}\label{thm-linearised-operator-fredholm-index}
Let \(\APhi\in\C_\mc^s\) be bounded polyhomogeneous. Then, the operator
\begin{equation}
\Dir_\APhi\colon\mathscr{H}^{s,1}\to\mathscr{H}^{s,0}
\end{equation}
is Fredholm and
\begin{equation}\label{eq-index}
\ind(\Dir_\APhi)=2\sum_{\substack{\alpha\in R\\i\alpha(\mass)>0}}i\alpha(\charge)-2\sum_{\substack{\alpha\in R\\\alpha(\mass)=0\\i\alpha(\charge)>0}}i\alpha(\charge)\,.
\end{equation}

Furthermore, elements in its kernel are in \(\scrB^{2,3}\).
\end{theorem}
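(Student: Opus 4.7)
The plan is to deduce everything from \cref{lem-conditions-for-index-theorem} applied at the specific weight $\delta = \tfrac{1}{2}$, which is precisely the value for which the source and target of that lemma coincide with $\scrH^{s,1}$ and $\scrH^{s,0}$. As a first step I would reduce to the model pair: if $\APhi = \APhisub{\mc} + \aphi$ with $\aphi \in \scrH^{s,1}$ bounded polyhomogeneous, then $\Dir_\APhi - \Dir_\APhisub{\mc}$ is a bundle endomorphism proportional to $\aphi$. By the compactness part of \cref{lem-multiplications}, applied to \eqref{eq-multiplication-3} with $\aphi$ fixed in the second slot, this is a compact operator $\scrH^{s,1} \to \scrH^{s,0}$. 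Hence it suffices to establish Fredholmness, the index formula and the kernel regularity for the model pair; the general case then follows from compact perturbation together with the fact that bounded polyhomogeneous perturbations do not worsen the asymptotic kernel regularity.

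I would then use the homotopy $\Dir_{(A_\mc, \Phi^{(t)})}$, $t \in [0,1]$, linking the self-adjoint b part at $t = 0$ to the model linearised operator at $t = 1$, and verify the hypothesis $\tfrac{1}{2} \notin \spec_b(\tD^{(t)}_{00})$ needed to apply \cref{lem-conditions-for-index-theorem}. On the centraliser subbundle $\Ad(P)_C^\CC = \underline{\t^\CC} \oplus \bigoplus_{\alpha(\mass)=0,\,\alpha\neq 0}\gb$, the b operator $x^{-1}\D^{(t)}_{00}$ restricted to each summand equals, at infinity, $x^{-1}\Dir_{i\alpha(\charge)} + \tfrac{t}{2}\alpha(\charge)$, whose indicial family is a finite-dimensional operator on $S^2$ built from the Dirac operator $\slashed{\del}_{S^2, i\alpha(\charge)}$ together with a $t$-linear algebraic correction. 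A direct calculation using the spectrum of $\slashed{\del}_{S^2, i\alpha(\charge)}$ shows that all real indicial roots lie in $\ZZ$ (once the shift from the $x$-conjugation defining $\tD^{(t)}_{00}$ is accounted for), so $\delta = \tfrac{1}{2}$ is in a spectral gap for every $t \in [0,1]$. This yields Fredholmness along the whole homotopy and homotopy invariance of the index.

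The index can then be evaluated at $t = 0$ as $\ind(\slashed{\del}_+^+) + \deft(\Dir_{(A_\mc, \Phi^{(0)})}, \tfrac{1}{2})$. For the scattering contribution, $E_+$ at infinity is $\Sb \otimes \underline{\CC^2} \otimes \bigoplus_{i\alpha(\mass) > 0}\gb$ (the condition $i\alpha(\mass) > 0$ is exactly the positive imaginary eigenspace condition on $-\ad_{\underline{\mass}}$), and $\slashed{\del}^+_+$ is the positive-chirality Dirac on $S^2$ twisted by $\gb \otimes \underline{\CC^2} \cong \lb{i\alpha(\charge)} \otimes \underline{\CC^2}$; Riemann--Roch contributes $2i\alpha(\charge)$ per root, giving the first sum in \eqref{eq-index}. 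For the defect, the self-adjointness of $\tD^{(0)}_{00}$ yields $\deft(\tfrac{1}{2}) = -\deft(-\tfrac{1}{2})$, and combining this with the spectral jump formula across the indicial roots in the interval $(-\tfrac{1}{2}, \tfrac{1}{2})$ produces $2\deft(\tfrac{1}{2}) = -\sum_\lambda \dim \Null(I(\tD^{(0)}_{00}, \lambda))$. These nullspace dimensions again reduce, via the identification $\gb \cong \lb{i\alpha(\charge)}$ on each summand with $\alpha(\mass) = 0$, to indices of twisted Dirac operators on $S^2$, contributing exactly $i\alpha(\charge)$ per root with $i\alpha(\charge) > 0$ and delivering the second sum in \eqref{eq-index} with its minus sign.

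The main obstacle is the explicit spectral analysis of $\tD^{(0)}_{00}$: realising $x^{-2}\Dir_{i\alpha(\charge)} \cdot x$ as a b operator on the cylinder $\RR_+ \times S^2$, describing its indicial family in terms of $\slashed{\del}_{S^2, i\alpha(\charge)}$, and tracking the resulting real indicial roots and the dimensions of their nullspaces in the window $[-\tfrac{1}{2}, \tfrac{1}{2}]$. Once this is in hand, the kernel regularity claim $\scrB^{2,3}$ follows from the final assertion of \cref{lem-conditions-for-index-theorem} with $\lambda_1 = 1$ (the smallest indicial root of $\tD^{(0)}_{00}$ strictly above $\tfrac{1}{2}$), while the passage from the complexified statement to the real linearised operator is handled by \cref{rem-real-and-complex}.
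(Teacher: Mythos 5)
Your overall strategy is the paper's: apply \cref{lem-conditions-for-index-theorem} at \(\delta=\frac12\), deform through \(\Dir_{(A,\Phi^{(t)})}\) to the operator with self-adjoint b part at \(t=0\), and assemble the index as \(\ind(\slashed{\del}_+^+)\) plus the defect. Your preliminary reduction to the model pair via the compactness part of \eqref{eq-multiplication-3} is an extra step the paper does not need (the lemma already applies to any bounded polyhomogeneous \(\APhi\)); it is harmless for Fredholmness and the index, but it does not by itself deliver the \(\scrB^{2,3}\) statement -- compact perturbation says nothing about polyhomogeneity of the kernel -- so for that you must in any case invoke the lemma for the perturbed operator directly, which makes the detour superfluous.

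The genuine problem is your assertion that ``all real indicial roots lie in \(\ZZ\).'' This is false, and it concerns exactly the computation you flag as the main obstacle and then do not carry out. Solving \(I(\tDir^{(t)}_\alpha,\lambda)(u,v)=0\) via \(\Dir^-_{i\alpha(\charge)}\Dir^+_{i\alpha(\charge)}u=\bigl(\lambda^2-(it\alpha(\charge)/2)^2\bigr)u\) and the eigenvalues \(j(j+\lvert i\alpha(\charge)\rvert)\) gives candidate roots \(\lambda=\pm\sqrt{j^2+j\lvert i\alpha(\charge)\rvert+(t\,i\alpha(\charge)/2)^2}\): at \(t=1\) these are the half-integers \(\pm(j+\lvert i\alpha(\charge)\rvert/2)\), and for intermediate \(t\) the \(j=0\) pair \(\pm t\lvert i\alpha(\charge)\rvert/2\) sweeps continuously through \([0,\lvert i\alpha(\charge)\rvert/2]\) -- no integrality, and no shift by the \(x\)-conjugation fixes this. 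The correct reason \(\delta=\frac12\) stays in a spectral gap for every \(t\in[0,1]\) is a case analysis: the \(j\geq1\) roots satisfy \(\lvert\lambda\rvert\geq1\), and of the two \(j=0\) candidates only \(\lambda=-t\lvert i\alpha(\charge)\rvert/2\leq0\) has nontrivial nullspace (of dimension \(\lvert i\alpha(\charge)\rvert\)), so \(\spec_b(\tD^{(t)}_{00})\cap(0,1)=\emptyset\). This same analysis is what feeds the jump formula at \(t=0\) (only \(\lambda=0\) lies in \((-\frac12,\frac12)\), with total nullspace \(4\sum_{\alpha(\mass)=0,\,i\alpha(\charge)>0}i\alpha(\charge)\)) and what yields \(\lambda_1=1\) for the kernel regularity -- which, incidentally, must be read off from \(\tD^{(1)}_{00}\), not \(\tD^{(0)}_{00}\). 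Without this computation your proposal is an outline of the right argument with its central step missing and its outcome mis-stated.
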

\begin{proof}
We consider the complexification of the operator and apply \cref{lem-conditions-for-index-theorem}. Our aim is to compute the index for \(t=1\) and \(\delta=\frac{1}{2}\), but we will also have to consider other values of \(t\) and \(\delta\) in order to do so. We set \(\D^{(t)}\) and \(\Psiop\) as above.

It will be useful throughout the proof to consider the positive and negative spinor bundles \(\Sb^+\) and \(\Sb^-\) over the unit sphere (which satisfy \(\Sb^{\pm}\iso\lb{\mp1}\)). We then get Dirac operators
\begin{equation}
\Dir_d^\pm\colon\Gamma(\Sb^\pm\otimes\lb{d})\to\Gamma(\Sb^\mp\otimes\lb{d})
\end{equation}
twisted by any bundle \(\lb{d}\). We know that, over the sphere, \(\Dir^\pm_d\) has index \(\pm d\). Furthermore, \(\Dir_d^+\) is injective and \(\Dir_d^-\) is surjective when \(d\geq0\), and vice versa when \(d<0\).

We can extend these bundles and operators radially to \(\RR^3\mzero\), where we also refer to them as \(\Sb^\pm\) and \(\Dir^\pm_d\), identifying \(\Sb=\Sb^+\oplus\Sb^-\).

As seen in \cref{lem-conditions-for-index-theorem}, we must compute the defect of the b part of the operator and the index of \(\slashed{\del}_+^+\) induced from the scattering part. Recall that, near infinity, the former acts on the subbundle
\begin{equation}
((\Exterior^1\oplus\Exterior^0)\otimes\Ad(P)_C)^\CC\iso\Sb\otimes\underline{\CC^2}\otimes\Bigl(\underline{\t^\CC}\oplus\bigoplus_{\substack{\alpha\in\Roots\\\alpha(\mass)=0}}\gb\Bigr)\,,
\end{equation}
whereas the latter acts on
\begin{equation}
((\Exterior^1\oplus\Exterior^0)\otimes\Ad(P)_{C^\perp})^\CC\iso\Sb\otimes\underline{\CC^2}\otimes\bigoplus_{\substack{\alpha\in\Roots\\\alpha(\mass)\neq0}}\gb\,.
\end{equation}
We will then study the operator on each line bundle of the form \(\Sb\otimes\gb\) and add up the contributions of each subbundle. We note that these line bundles are duplicated due to the factor \(\underline{\CC^2}\) (or replicated \(2\rank(G)\) times in the case of \(\alpha=0\) corresponding to \(\underline{\t^\CC}\)), and hence the contributions to the index will be multiplied accordingly.

Now, we start by computing the contribution from the b part of the operator (the defect), so we must find \(\spec_b(\tD^{(t)}_{00})\) for each \(t\). Near infinity, the operator acts on subbundles of the form \(\Sb\otimes \gb\) for \(\alpha(\mass)=0\) (two copies for each such root \(\alpha\) and \(2\rank(G)\) copies for \(\alpha=0\), corresponding to \(\underline{\t^\CC}\)). We have the decomposition
\begin{equation}
\Sb\otimes\gb=(\Sb^+\otimes\gb)\oplus(\Sb^-\otimes\gb)\,,
\end{equation}
with respect to which we can write
\begin{equation}
\restrop{\tD^{(t)}_{00}}{\gb}\simeq\tDir^{(t)}_\alpha\coloneqq\dmat{-i\Bigl(x\pderiv{}{x}+\frac{it\alpha(\charge)}{2}\Bigr)&\Dir_{i\alpha(\charge)}^-\\\Dir_{i\alpha(\charge)}^+&i\Bigl(x\pderiv{}{x}-\frac{it\alpha(\charge)}{2}\Bigr)}\,,
\end{equation}
where the left-hand side differs from the right-hand side by an algebraic term which is proportional to \(tx^{-1}(\APhi-\APhisub{\mc})\), and hence bounded polyhomogeneous of order smaller than \(x^{\frac{1}{2}}\). To obtain the expression for \(\tDir^{(t)}_\alpha\) we have combined the form of the Dirac operator on \(\Sb\otimes\gb\) over \(\RR^3\) viewed as a cone over the unit sphere with the action of the Higgs field \cite{Nak93}. Note that the radial variable \(r\) has now been substituted by the inverse of the boundary defining function \(x\).

In order to compute the indicial roots, we must consider the operators \(I(\tD^{(t)}_{00},\lambda)\) over the sphere at infinity. For each subbundle \(\gb\), these restrict to
\begin{equation}
I\Bigl(\tDir^{(t)}_\alpha,\lambda\Bigr)=\dmat{-i\Bigl(\lambda+\frac{it\alpha(\charge)}{2}\Bigr)&\Dir_{i\alpha(\charge)}^-\\\Dir_{i\alpha(\charge)}^+&i\Bigl(\lambda-\frac{it\alpha(\charge)}{2}\Bigr)}\,,
\end{equation}
acting on the bundles \((\Sb^+\otimes\lb{i\alpha(\charge)})\oplus(\Sb^-\otimes\lb{i\alpha(\charge)})\) over the unit sphere.

To find the kernels of these operators, let us take \((u,v)\in\ker I(\tDir^{(t)}_\alpha,\lambda)\). The resulting equations are
\begin{subequations}
\begin{alignat}{1}
\Dir_{i\alpha(\charge)}^-v&=i\Bigl(\lambda+\frac{it\alpha(\charge)}{2}\Bigr)u\,,\\
\Dir_{i\alpha(\charge)}^+u&=-i\Bigl(\lambda-\frac{it\alpha(\charge)}{2}\Bigr)v\,.
\end{alignat}
\end{subequations}
If we apply \(\Dir_{i\alpha(\charge)}^-\) to the second equation and substitute \(\Dir_{i\alpha(\charge)}^-v\) using the first, we get
\begin{equation}\label{eq-laplacian-on-sphere}
\Dir_{i\alpha(\charge)}^-\Dir_{i\alpha(\charge)}^+u=\Bigl(\lambda^2-\Bigl(\frac{it\alpha(\charge)}{2}\Bigr)^2\Bigr)u\,.
\end{equation}
But the eigenvalues of \(\Dir_{i\alpha(\charge)}^-\Dir_{i\alpha(\charge)}^+\) are \(j(j+\lvert i\alpha(\charge)\rvert)\), for \(j\in\ZZ_{\geq0}\), excluding \(0\) when \(i\alpha(\charge)\leq0\) \cite{Kuw84,LT06}. Hence, if \(u\neq0\), we must have
\begin{equation}
\lambda=\pm\sqrt{j^2+j\lvert i\alpha(\charge)\rvert+\Bigl(\frac{it\alpha(\charge)}{2}\Bigr)^2}
\end{equation}
for some integer \(j\geq0\).

Let us first take \(j=0\), that is, \(\lambda=\pm\frac{it\alpha(\charge)}{2}\).

When \(\lambda=-\frac{it\alpha(\charge)}{2}\) our equations become
\begin{subequations}
\begin{alignat}{1}
\Dir_{i\alpha(\charge)}^-v&=0\,,\\
\Dir_{i\alpha(\charge)}^+u&=-t\alpha(\charge)v\,.
\end{alignat}
\end{subequations}
When \(i\alpha(\charge)>0\), the first equation implies \(v=0\), and the second equation has a space of solutions of dimension \(i\alpha(\charge)\). When \(i\alpha(\charge)\leq0\), \eqref{eq-laplacian-on-sphere} implies \(u=0\), which in turn implies \(v=0\) if \(t\neq0\).

When \(\lambda=\frac{it\alpha(\charge)}{2}\) (which also includes the case where \(u=0\) and \(v\neq0\)), we can perform analogous computations to check that, when \(i\alpha(\charge)<0\), we have a space of solutions of dimension \(-i\alpha(\charge)\), and, when \(i\alpha(\charge)\geq0\), we only have the zero solution if \(t\neq0\).

To summarise the case \(j=0\), when \(\alpha(\charge)\neq0\) and \(\lambda=-\frac{t\lvert i\alpha(\charge)\rvert}{2}\), the kernel of \(I(\tDir^{(t)}_\alpha,\lambda)\) has dimension \(\lvert i\alpha(\charge)\rvert\), and otherwise it has dimension \(0\).

When \(j>0\), we have \(\lambda\in\RR\setminus(-1,1)\), which, as we will see, will not affect the computation of the index, although we observe that for \(t=1\) they are the half integers \(\pm(j+\frac{\lvert i\alpha(\charge)\rvert}{2})\).

In particular, the b spectrum is real, and we can furthermore check that every element in it has order \(1\) by computing their formal nullspaces.

Now, applying the relationships from \cref{lem-conditions-for-index-theorem}, we see that
\begin{equation}
\deft(\Dir_{(A,\Phi^{(0)})},\epsilon)=-\deft(\Dir_{(A,\Phi^{(0)})},-\epsilon)\,,
\end{equation}
and
\begin{equation}
\deft(\Dir_{(A,\Phi^{(0)})},-\epsilon)=\deft(\Dir_{(A,\Phi^{(0)})},\epsilon)+\dim\Null(I(\tD^{(0)}_{00},0))\,,
\end{equation}
when \(0<\epsilon<1\). The last term of the last equation, as we saw, is
\begin{equation}
2\sum_{\substack{\alpha\in R\\\alpha(\mass)=0}}\lvert i\alpha(\charge)\rvert=4\sum_{\substack{\alpha\in R\\\alpha(\mass)=0\\i\alpha(\charge)>0}}i\alpha(\charge)\,.
\end{equation}
Hence,
\begin{equation}
\deft(\Dir_{(A,\Phi^{(0)})},\epsilon)=-2\sum_{\substack{\alpha\in R\\\alpha(\mass)=0\\i\alpha(\charge)>0}}i\alpha(\charge)\,.
\end{equation}
But since there are no indicial roots in \((0,1)\) for any \(t\in[0,1]\), we also have
\begin{equation}\label{eq-defect-of-linearised-operator}
\deft\Bigl(\Dir_{(A,\Phi^{(1)})},\frac{1}{2}\Bigr)=-2\sum_{\substack{\alpha\in R\\\alpha(\mass)=0\\i\alpha(\charge)>0}}i\alpha(\charge)\,.
\end{equation}

\Cref{fig-index-1,fig-index-all} give a visual representation of the indicial roots for \(t\in[0,1]\) and how we can deduce the defect of the operator.

\newcommand{\myaxes}{
 \draw (-3.05,0) -- (3.05,0);
 \draw [densely dotted] (-3.15,0) -- (-3.05,0);
 \draw [densely dotted] (3.05,0) -- (3.15,0);
 \draw (-3.05,1) -- (3.05,1);
 \draw [densely dotted] (-3.15,1) -- (-3.05,1);
 \draw [densely dotted] (3.05,1) -- (3.15,1);
 \foreach \i in {-3,...,3}
 {
  \draw (\i,-0.05) -- (\i,0.05);
  \node [label=below:{\(\scriptstyle\i\)}] at (\i,-0.025) {};
  \draw (\i,0.95) -- (\i,1.05);
  \node [label=above:{\(\scriptstyle\i\)}] at (\i,1.025) {};
 }
 \draw [<->] (-0.4,1.5) -- (0.4,1.5);
 \node at (0,1.625) {\(\delta\)};
 
 \node [label=left:{\(\scriptstyle0\)}] (t0) at (-3.5,0) {};
 \node [label=left:{\(\scriptstyle1\)}] (t1) at ($(t0)+(0,1)$) {};
 
 \draw ($(t0)-(0,0.05)$) -- ($(t1)+(0,0.05)$);
 \draw [densely dotted] ($(t0)-(0,0.15)$) -- ($(t0)-(0,0.05)$);
 \draw [densely dotted] ($(t1)+(0,0.05)$) -- ($(t1)+(0,0.15)$);
 \draw (-3.55,0) -- (-3.45,0);
 \draw (-3.55,1) -- (-3.45,1);
 \draw [<->] ($(t0)+(-0.45,0.2)$) -- ($(t1)+(-0.45,-0.2)$);
 \node at (-4.025,0.5) {\(t\)};
 
 \node (d) at (0.5,1) {};
 
 \node at ($(d)+(0.013,0.3)$) {\(\scriptstyle t=1,\)};
 \node at ($(d)+(0,0.15)$) {\(\scriptstyle \delta=\frac{1}{2}\)};

 \fill (d) circle [radius=1.225pt] {};
}
\newcommand{\indicialplotpoint}[3]{{\if0#2 #3*#1*\x/2\else#3*sqrt(#2*(#2+#1)+((\x*#1)/2)^2)\fi},\x}
\newcommand{\indicialplotaux}[4]{
\draw [domain=-0.05:1.05, samples=331, color=#4, line width=0.75] plot (\indicialplotpoint{#1}{#2}{#3});
\draw [domain=-0.15:-0.05, samples=31, color=#4, line width=0.75, densely dotted] plot (\indicialplotpoint{#1}{#2}{#3});
\draw [domain=1.05:1.15, samples=31, color=#4, line width=0.75, densely dotted] plot (\indicialplotpoint{#1}{#2}{#3});
}
\newcommand{\indicialplotpos}[3]{\indicialplotaux{#1}{#2}{1}{#3}}
\newcommand{\indicialplotneg}[3]{\indicialplotaux{#1}{#2}{-1}{#3}}
\newcommand{\indicialplotsupto}[3][black]{
 \if0#2\else\indicialplotneg{#2}{0}{#1}\fi
 \foreach \i in {1,...,#3}
 {
  \indicialplotneg{#2}{\i}{#1}
  \indicialplotpos{#2}{\i}{#1}
 }
}

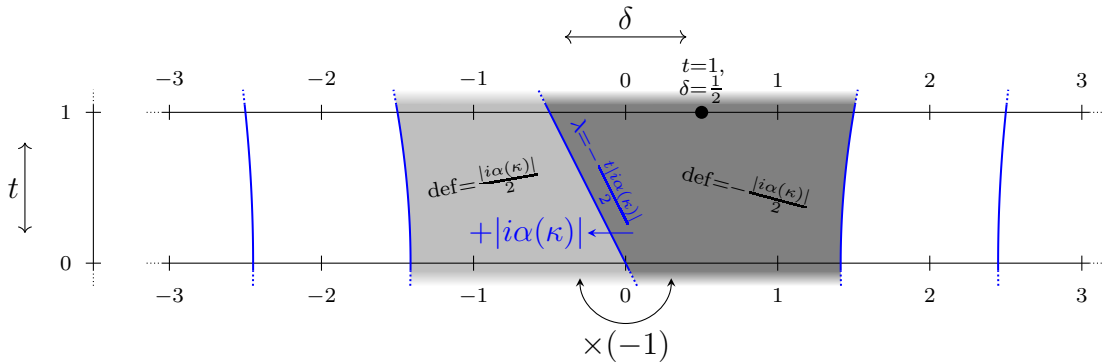
\begin{figure}[p!]
\centering
\begin{tikzpicture}[scale=2]

\fill[white, shading=axis] plot[domain=-0.15:-7/150, samples=32] (\indicialplotpoint{1}{0}{-1})  -- plot[domain=-7/150:-0.15, samples=32] (\indicialplotpoint{1}{1}{1});
\fill[white, shading=axis, shading angle=180] plot[domain=157/150:1.15, samples=32] (\indicialplotpoint{1}{0}{-1})  -- plot[domain=1.15:157/150, samples=32] (\indicialplotpoint{1}{1}{1});
\fill[gray] plot[domain=-4/75:79/75, samples=333] (\indicialplotpoint{1}{0}{-1})  -- plot[domain=79/75:-4/75, samples=333] (\indicialplotpoint{1}{1}{1});

\fill[white, shading=axis, top color=gray!50!white] plot[domain=-0.15:-7/150, samples=32] (\indicialplotpoint{1}{0}{-1})  -- plot[domain=-7/150:-0.15, samples=32] (\indicialplotpoint{1}{1}{-1});
\fill[white, shading=axis, shading angle=180, bottom color=gray!50!white, top color=white] plot[domain=157/150:1.15, samples=32] (\indicialplotpoint{1}{0}{-1})  -- plot[domain=1.15:157/150, samples=32] (\indicialplotpoint{1}{1}{-1});
\fill[gray!50!white] plot[domain=-4/75:79/75, samples=333] (\indicialplotpoint{1}{0}{-1})  -- plot[domain=79/75:-4/75, samples=333] (\indicialplotpoint{1}{1}{-1});

\myaxes
\indicialplotsupto[blue]{1}{2}

\draw [-stealth, blue] (0.05, 0.2) -- (-0.25,0.2);
\node [color=blue] at (-0.65,0.2) {\(+\lvert i\alpha(\charge)\rvert\)};
\node [label={[color=blue, rotate=atan(0.5)-90]\(\scriptstyle\lambda=-\frac{t\lvert i\alpha(\charge)\rvert}{2}\)}] at (-0.3,0.45) {};

\draw [stealth-stealth] (-0.3, -0.1) arc[start angle=180, end angle=360, radius=0.3];
\node at (0, -0.55) {\(\times(-1)\)};

\node [label={[rotate=-15]\(\scriptstyle\mathrm{def}=-\frac{\lvert i\alpha(\charge)\rvert}{2}\)}] at (0.75,0.25) {};
\node [label={[rotate=10]\(\scriptstyle\mathrm{def}=\frac{\lvert i\alpha(\charge)\rvert}{2}\)}] at (-0.9,0.3) {};

\end{tikzpicture}
\caption{We can represent the operator for each choice of the parameter \(t\) and the weight \(\delta\) as a point on this diagram. Then, for a given root subbundle \(\gb\) (in this example, drawn for \(\lvert i\alpha(\charge)\rvert=1\)), we can represent the corresponding indicial roots as {\color{blue}lines} on the same diagram. The defect corresponding to this component will be locally constant on the complement of these lines. The relative index formula with the above computations tells us that crossing the marked line from right to left would correspond to adding \(\lvert i\alpha(\charge)\rvert\) to the defect. The other indicial roots, also drawn here, will fall in the regions \(\lvert\delta\rvert\geq1\), so they won't affect our computation. Furthermore, since the operator is self-adjoint for \(t=0\), reflecting around the origin changes the sign. These two facts together imply that the defect corresponding to this root subbundle is precisely \(-\frac{\lvert i\alpha(\charge)\rvert}{2}\) for the whole dark grey area, and, in particular, for the relevant \textbf{point} (\(t=1\) and \(\delta=\frac{1}{2}\)).}
\label{fig-index-1}

\vspace*{1em}
\end{figure}

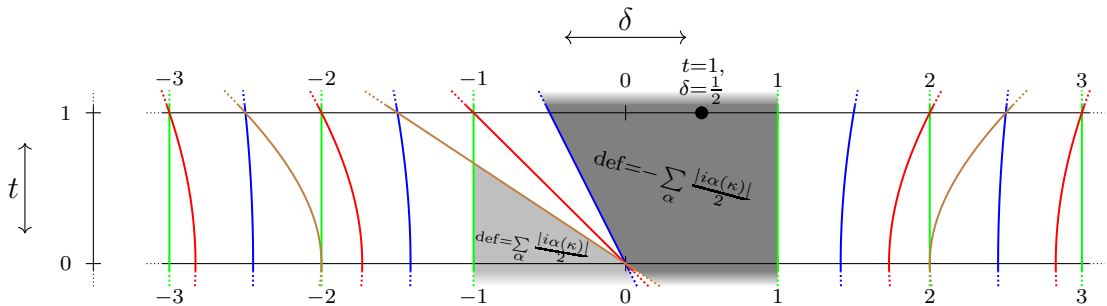
\begin{figure}[p!]
\centering
\begin{tikzpicture}[scale=2]

\fill[white, shading=axis] plot[domain=-0.15:-7/150, samples=32] (\indicialplotpoint{3}{0}{-1})  -- plot[domain=-7/150:-0.15, samples=32] (\indicialplotpoint{0}{1}{1});
\fill[white, shading=axis, shading angle=180] plot[domain=157/150:1.15, samples=32] (\indicialplotpoint{1}{0}{-1})  -- plot[domain=1.15:157/150, samples=32] (\indicialplotpoint{0}{1}{1});
\fill[gray] plot[domain=-1/300:79/75, samples=318] (\indicialplotpoint{1}{0}{-1})  -- plot[domain=79/75:-1/300, samples=318] (\indicialplotpoint{0}{1}{1});
\fill[gray] plot[domain=-4/75:1/300, samples=18] (\indicialplotpoint{3}{0}{-1})  -- plot[domain=1/300:-4/75, samples=18] (\indicialplotpoint{0}{1}{1});

\fill[white, shading=axis, top color=gray!50!white] plot[domain=-0.15:-7/150, samples=32] (\indicialplotpoint{1}{0}{-1})  -- plot[domain=-7/150:-0.15, samples=32] (\indicialplotpoint{0}{1}{-1});
\fill[gray!50!white] plot[domain=-1/300:2/3, samples=202] (\indicialplotpoint{3}{0}{-1})  -- plot[domain=2/3:-1/300, samples=202] (\indicialplotpoint{0}{1}{-1});
\fill[gray!50!white] plot[domain=-4/75:1/300, samples=18] (\indicialplotpoint{1}{0}{-1})  -- plot[domain=1/300:-4/75, samples=18] (\indicialplotpoint{0}{1}{-1});

\myaxes
\indicialplotsupto[green]{0}{3}
\indicialplotsupto[blue]{1}{2}
\indicialplotsupto[red]{2}{2}
\indicialplotsupto[brown]{3}{1}

\node [label={[rotate=-15]\(\scriptstyle\mathrm{def}=-\sum\limits_\alpha\frac{\lvert i\alpha(\charge)\rvert}{2}\)}] at (0.25,0.25) {};
\node [label={[rotate=-10, scale=0.9]\(\scriptscriptstyle\mathrm{def}=\sum\limits_\alpha\frac{\lvert i\alpha(\charge)\rvert}{2}\)}] at (-0.64,-0.14) {};

\end{tikzpicture}
\caption{Here we can see the result of adding indicial roots corresponding to \(\lvert i\alpha(\charge)\rvert\) being equal to {\color{green}\(0\)}, {\color{blue}\(1\)}, {\color{red}\(2\)} and {\color{brown}\(3\)}. We can see that the only relevant contributions are the ones corresponding to the lowest indicial root in each case (or none, in the case of \(\alpha(\charge)=0\)).}
\label{fig-index-all}

\vspace*{1em}
\end{figure}

Now we compute \(\ind(\slashed{\del}_+^+)\), the contribution to the index from the scattering part, which does not depend on \(\delta\) or \(t\). This is given by the index of a Dirac operator induced on the sphere at infinity by the operator \(\D^{(t)}_{11}\), which acts on sections of the subbundles \(\Sb\otimes\gb\), for \(\alpha(\mass)\neq0\) (two copies for each such root \(\alpha\)). But the positive imaginary eigenspaces of \(\Psiop\) are just those for which \(i\alpha(\mass)>0\). Furthermore, for each of these subbundles, the \(+1\) eigenspace of \(i\operatorname{cl}(x^2\pderiv{}{x})\) consists of the positive spinor parts. To summarise, we are left with the subbundles
\begin{equation}
\Sb^+\otimes\gb\iso\Sb^+\otimes\lb{i\alpha(\charge)}
\end{equation}
for which \(i\alpha(\mass)>0\).

The Dirac operator \(\slashed{\del}_+^+\) restricted to such a bundle at the sphere at infinity is simply \(\Dir_d^+\), which has index \(i\alpha(\charge)\). Putting them all together, we get
\begin{equation}\label{eq-sc-index-of-linearised-operator}
\ind(\slashed{\del}_+^+)=2\sum_{\substack{\alpha\in R\\i\alpha(\mass)>0}}i\alpha(\charge)\,.
\end{equation}

Adding \eqref{eq-sc-index-of-linearised-operator} and \eqref{eq-defect-of-linearised-operator} produces the formula for the index.

Lastly, the order of the bounded polyhomogeneous elements in its kernel follows from the fact that, in the b part, the smallest possible indicial root bigger than \(\frac{1}{2}\) for \(t=1\) is \(1\).
\end{proof}

\subsection{Surjectivity and kernel}

It is also important that the linearised operator be surjective. This relies, among other things, on the flatness of the underlying manifold \(\RR^3\).

\begin{proposition}
Let \(\APhi\in\C_\mc^s\) be bounded polyhomogeneous, and assume it satisfies the Bogomolny equation. Then, the operator
\begin{equation}
\Dir_\APhi\colon\mathscr{H}^{s,1}\to\mathscr{H}^{s,0}
\end{equation}
is surjective. Hence, its kernel is a vector space whose dimension is given by \eqref{eq-index}.
\end{proposition}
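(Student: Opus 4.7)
Since $\Dir_\APhi$ is Fredholm by \cref{thm-linearised-operator-fredholm-index}, the plan is to establish surjectivity by showing the cokernel is trivial; the kernel dimension will then follow at once from the index formula \eqref{eq-index}. To set this up, I would represent a cokernel class by an element $u$ in the dual of $\scrH^{s,0}$ satisfying $\Dir_\APhi^* u = 0$ in the distributional sense. The off-diagonal block $\begin{pmatrix}-\hs\d_A & \d_A \\ \d_A^* & 0\end{pmatrix}$ is formally self-adjoint while $\ad_\Phi$ is skew-adjoint for real $\Phi$ valued in the adjoint bundle of a compact Lie group, so $\Dir_\APhi^* = \Dir_{(A,-\Phi)}$. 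Applying the elliptic regularity and indicial-root analysis of the hybrid b-scattering calculus -- the same tools used in the proof of \cref{thm-linearised-operator-fredholm-index} -- to this adjoint operator shows that any such $u$ is smooth and bounded polyhomogeneous, with decay sufficient to pair against elements of $\scrH^{s,1}$ and $\scrH^{s,2}$.

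The key input will be a Weitzenböck-type identity. Interpreting $\Dir_\APhi$ as the dimensional reduction of the linearised anti-self-dual operator at the $4$-dimensional connection $A + \Phi\,\d t$ (\cref{rmk-dimensional-reduction}) and dimensionally reducing the standard ASD Weitzenböck formula, I would derive
\begin{equation*}
\Dir_\APhi\,\Dir_\APhi^* \;=\; \nabla_A^*\nabla_A \;-\; \ad_\Phi\circ\ad_\Phi \;+\; R_\APhi\,,
\end{equation*}
where $R_\APhi$ is a zeroth-order operator given by the combined Clifford and adjoint actions of $\Bog(\APhi)$, and therefore vanishes precisely because $\APhi$ is a monopole. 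Pairing $\Dir_\APhi^* u = 0$ with $u$ in $L^2$ and integrating by parts (extending \cref{lem-integration-by-parts} to the relevant subbundles using the polyhomogeneous decay of $u$) then gives
\begin{equation*}
0 \;=\; \langle \Dir_\APhi\,\Dir_\APhi^* u,\, u\rangle_{L^2} \;=\; \lVert\nabla_A u\rVert_{L^2}^2 \;+\; \lVert\ad_\Phi u\rVert_{L^2}^2\,,
\end{equation*}
using that $-\ad_\Phi\circ\ad_\Phi = \ad_\Phi^*\ad_\Phi$ is positive semi-definite. In particular $\nabla_A u = 0$, so $\lvert u\rvert$ is constant on the connected manifold $\RR^3$, and the polyhomogeneous decay at infinity forces this constant to vanish. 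Thus $u\equiv 0$, the cokernel is trivial, and surjectivity follows.

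The main obstacle I anticipate is justifying the Weitzenböck computation and the integration by parts rigorously in the weighted hybrid Sobolev setting. This will require tracking signs through the $\underline{\CC^2}$ duplication factor in $\Sb\otimes\underline{\CC^2}\otimes\Ad(P)^\CC$, reconciling the real and complex viewpoints of \cref{rem-real-and-complex}, and exploiting the precise decay rates coming from the b-calculus indicial root analysis to ensure that all boundary contributions in the integration by parts vanish. Once these are in place, the index formula of \cref{thm-linearised-operator-fredholm-index} yields the claimed kernel dimension.
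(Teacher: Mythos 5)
Your proposal is correct and follows essentially the same route as the paper: identify the formal adjoint as \(\Dir_{(A,-\Phi)}\) acting on the dual (distributional) spaces, use the parametrix/indicial-root analysis to get polyhomogeneous decay for an element \(u\) of its kernel, apply the Weitzenböck formula together with the Bogomolny equation to obtain \(\Dir_\APhi\Dir_{(A,-\Phi)}u=\nabla^*\nabla u-\ad_\Phi^2u\), and integrate by parts to conclude \(\nabla u=0\) and hence \(u\equiv0\) by decay. The paper carries out the decay bookkeeping you flag as the main obstacle by noting that \(u\in\scrB^{\frac{3}{2}}\) (since there are no indicial roots in \((-1,\frac{1}{2})\)), so that \(\nabla u\) and \(\ad_\Phi^2u\) have orders \(x^{\frac{5}{2}}\) and \(x^{\frac{7}{2}}\) and all boundary terms vanish.
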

\begin{proof}
Consider the formal adjoint \(\Dir_{(A,-\Phi)}\) of the operator. If we consider the dual spaces \((\scrH^{s,0})^*\) and \((\scrH^{s,1})^*\) as spaces of distributions (using the \(L^2\) pairing), we have the operator
\begin{equation}
\Dir_{(A,-\Phi)}\colon(\scrH^{s,0})^*\to(\scrH^{s,1})^*\,,
\end{equation}
which is the transpose of the operator in the statement. Hence, if we prove that it is injective, we will be done.

Now, suppose that \(u\in(\scrH^{s,0})^*\) satisfies \(\Dir_{(A,-\Phi)}u=0\). Similarly to elements in the kernel of \(\Dir_\APhi\), \(u\) must also be bounded polyhomogeneous, which follows from the parametrix construction from Kottke's work.

To find the order of \(u\), we remember that \(\scrH^{s,0}\) is weighted by \(x^1\), so its dual will be weighted by \(x^{-1}\). In the notation of \cref{lem-conditions-for-index-theorem}, this corresponds to \(\delta=-\frac{1}{2}\). Furthermore, the indicial roots of \(\Dir_{(A,-\Phi)}\) will be the opposite of those of \(\Dir_\APhi\). Using, once again, the same notation, we see that we have no indicial roots in \((-1,\frac{1}{2})\). Therefore, \(u\) must be bounded polyhomogeneous of order \(x^{\frac{3}{2}}\).

Let us consider the operator \(\Dir_\APhi\Dir_{(A,-\Phi)}\). Applying the Bogomolny equation and the Weizenböck formula we can see that
\begin{equation}
\Dir_\APhi\Dir_{(A,-\Phi)}u=\nabla^*\nabla u-\ad_\Phi^2u\,,
\end{equation}
where \(\nabla\) denotes the covariant derivative with respect to the connection \(A\) \cite{Nak93}.

Therefore, \(\nabla u\) and \(\nabla^*\nabla u\) are also bounded polyhomogeneous of orders \(x^{\frac{5}{2}}\) and \(x^{\frac{7}{2}}\), respectively. This means, firstly, that we can integrate by parts to get
\begin{equation}
\langle\nabla^*\nabla u,u\rangle_{L^2}=\langle\nabla u,\nabla u\rangle_{L^2}=\lVert\nabla u\rVert_{L^2}^2\,.
\end{equation}
Secondly, since \(\Dir_\APhi\Dir_{(A,-\Phi)}u=0\), \(\ad_\Phi^2u\) must also be bounded polyhomogeneous of order \(x^{\frac{7}{2}}\), which means that we can write
\begin{equation}
\langle\ad_\Phi^2u,u\rangle_{L^2}=-\langle\ad_\Phi u,\ad_\Phi u\rangle_{L^2}=\lVert\ad_\Phi u\rVert_{L^2}^2\,.
\end{equation}
Putting both things together, we get
\begin{equation}
0=\langle\Dir_\APhi\Dir_{(A,-\Phi)}u,u\rangle_{L^2}=\lVert\nabla u\rVert_{L^2}^2+\lVert\ad_\Phi u\rVert_{L^2}^2\,,
\end{equation}
implying \(\nabla u=0\). Given the decay condition on \(u\), this implies that \(u=0\), which completes the proof of the surjectivity of the operator.
\end{proof}

\section{Moduli space construction}\label{sec-moduli}

We can now use this to construct the moduli space as a smooth manifold. The kernel of the linearised operator will provide the model space for the charts.

To simplify notation, we introduce the following two operators. For a given pair \(M=\APhi\in\C\), we write
\begin{equation}
\begin{alignedat}{1}
\d_\APhi\colon\Gamma(\Ad(P))&\to\Gamma((\Exterior^1\oplus\Exterior^0)\otimes\Ad(P))\\
X&\mapsto(\d_AX,\ad_\Phi X)
\end{alignedat}
\end{equation}
and
\begin{equation}
\begin{alignedat}{1}
\d^*_\APhi\colon\Gamma((\Exterior^1\oplus\Exterior^0)\otimes\Ad(P))&\to\Gamma(\Ad(P))\\
\aphi&\mapsto\d^*_Aa-\ad_\Phi \phi\,.
\end{alignedat}
\end{equation}
The first operator provides us with the infinitesimal actions of the group of gauge transformations, since \((\ia{X})_M=-\d_MX\). The second operator is the formal \(L^2\) adjoint of the first, whose kernel (in the appropriate space) is hence orthogonal to the gauge orbits. It was part of the linearised operator defined in \cref{subsec-linearised-operator}, since it provides the Coulomb gauge fixing condition, and will be used in this way again in this section. The notation itself once again draws on the interpretation of configuration pairs as dimensionally reduced connections on \(\RR^4\), as noted in \cref{rmk-dimensional-reduction}, since \(\d_\APhi\) corresponds simply to the covariant derivative of the connection on \(\RR^4\).

\subsection{Regularity}

Given a monopole \(\APhi\) in the configuration space, we want to build a chart of the moduli space near this monopole. This will be done by using the implicit function theorem to construct a slice of the gauge action within the subspace of monopoles, which relies on the properties seen in the previous section regarding the linearised operator \(\Dir_\APhi\).

The above properties required some additional assumptions on the regularity of the monopole; however, as it turns out, we can apply a gauge transformation to any monopole to obtain one with this regularity. This is done by choosing a nearby configuration pair with good enough regularity and asymptotic conditions, and then looking for a monopole which is gauge equivalent to ours and also in Coulomb gauge with respect to the chosen configuration pair. This gauge fixing condition together with the Bogomolny equation will then provide an elliptic system, allowing us to obtain the regularity.

This is analogous to the linearised problem we studied in the previous section: the linearised operator was a linear elliptic operator which consisted of the Coulomb gauge fixing condition together with the linearisation of the Bogomolny equations. Note that the gauge fixing condition is linear, and hence is common in the linear and non-linear problems.

\begin{proposition}\label{pro-coulomb-gauge}
Let \(M\in\C_\mc^s\) and let \(U\) be a sufficiently small neighbourhood of \(M\). Then, there exists another neighbourhood \(U'\) of \(M\) such that if \(N\in\C_\mc^s\) is in \(U'\), then there exists a gauge transformation \(g\in\G_\mc^s\) such that
\begin{equation}\label{eq-coulomb-gauge}
\d^*_M(g\cdot N-M)=0\,
\end{equation}
and \(g\cdot N\in U\). Furthermore, this gauge transformation is unique within a neighbourhood of the identity.
\end{proposition}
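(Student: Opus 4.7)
The plan is to apply the Banach implicit function theorem to the smooth map
\begin{equation}
F\colon\G_\mc^s\times\C_\mc^s\to\scrH^{s,0}_0\,,\qquad F(g,N)\coloneqq\d^*_M(g\cdot N-M)\,,
\end{equation}
near the basepoint $(1_G,M)$. Smoothness is automatic: the gauge action $\G_\mc^s\times\C_\mc^s\to\C_\mc^s$ is smooth by the multiplication properties of \cref{lem-multiplications}, and $\d^*_M$ is bounded and linear. Clearly $F(1_G,M)=0$, and differentiating in $g$ at $(1_G,M)$ in direction $X\in\GLie_\mc^s=\scrH^{s,2}_0$ gives
\begin{equation}
\partial_g F|_{(1_G,M)}(X)=\d^*_M\bigl((\ia{X})_M\bigr)=-\d^*_M\d_MX\,.
\end{equation}

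The crux is therefore to prove that
\begin{equation}
\d^*_M\d_M\colon\scrH^{s,2}_0\to\scrH^{s,0}_0
\end{equation}
is a Banach isomorphism. For injectivity, I would integrate by parts using \cref{lem-integration-by-parts}: if $\d^*_M\d_MX=0$, then $0=\langle\d^*_M\d_MX,X\rangle_{L^2}=\lVert\d_MX\rVert_{L^2}^2$, so $\d_AX=0$ and $\ad_\Phi X=0$; hence $X$ is covariantly constant with $|X|$ constant on $\RR^3$, and the weighted integrability built into $\scrH^{s,2}_0$ (the $x^{-1}$ weight on the $\Ad(P)_C$ component being borderline, a nonzero constant failing by a logarithmic divergence at infinity) forces $X=0$. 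For surjectivity, note that $\d^*_M\d_M$ is formally self-adjoint for the pairing of \cref{lem-integration-by-parts} and is a compact perturbation of the block-diagonal model operator $\d^*_{A_\mc}\d_{A_\mc}-\ad_{\Phi_\mc}^2$ — the difference consisting of lower-order terms built from $\APhi-\APhisub{\mc}\in\scrH^{s,1}$, which \cref{lem-multiplications} makes compact between the relevant hybrid spaces. The model operator decomposes along the root subbundles into a b-Laplacian (after the usual $x^{-1}$-conjugation) on $\Ad(P)_C$ and a scattering Laplacian with non-degenerate mass-squared term on $\Ad(P)_{C^\perp}$, so Kottke's Fredholm results (exactly as invoked in \cref{thm-linearised-operator-fredholm-index}) yield Fredholmness of index zero in our weights. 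Together with injectivity and formal self-adjointness, this forces surjectivity.

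With the isomorphism established, the implicit function theorem produces a smooth map $N\mapsto g(N)\in\G_\mc^s$ on a neighbourhood $U'$ of $M$, with $g(M)=1_G$ and $F(g(N),N)=0$; continuity of the gauge action lets one shrink $U'$ further so that also $g(N)\cdot N\in U$, and uniqueness of $g$ near $1_G$ is part of the IFT conclusion. The main technical obstacle is the Fredholm analysis of the gauge Laplacian on the hybrid spaces: one must verify that the boundary weights singled out by $\scrH^{s,2}_0$ and $\scrH^{s,0}_0$ avoid the b-spectrum of the model operator on $\Ad(P)_C$ and lie in the admissible range for the scattering part — a computation parallel to, but independent of, the Dirac indicial analysis of \cref{thm-linearised-operator-fredholm-index}.
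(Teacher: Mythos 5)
Your proposal is correct and follows essentially the same route as the paper: the implicit function theorem applied to $(N,g)\mapsto\d^*_M(g\cdot N-M)$, with the key point being that $\d^*_M\d_M\colon\scrH^{s,2}_0\to\scrH^{s,0}_0$ is an isomorphism, proved by combining injectivity (integration by parts plus the decay forced by the weights) with Fredholmness of index zero obtained from the b/scattering decomposition, self-adjointness, and compact-perturbation arguments via \cref{lem-multiplications}. The only differences are cosmetic: the paper inserts an auxiliary pair $M_0$, compactly supported modification of $M_\mc$, whose connection and Higgs field preserve the subbundle decomposition over all of $\RR^3$ so that the block-diagonal model is exact, and your parenthetical about a ``logarithmic divergence'' for constants in $x^{-1}H_b$ is not quite right (the divergence is linear; the weight is not borderline), though the conclusion that nonzero covariantly constant sections are excluded stands.
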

\begin{proof}
This is a consequence of applying the implicit function theorem to the smooth function
\begin{equation}
\begin{alignedat}{1}
f\colon\C_\mc^s\times\G_\mc^s&\to\scrH^{s,0}_0\\
(N,g)&\mapsto\d^*_M(g\cdot N-M)\,.
\end{alignedat}
\end{equation}
Note that \(M\) is fixed for the definition of \(f\), and that \(f(M,1_{\G_\mc^s})=0\).

Hence, we must prove that the map
\begin{equation}
\d f_{(M,1_{\G_\mc^s})}(0,\ph)=-\d^*_M\d_M\colon\scrH^{s,2}_0\to\scrH^{s,0}_0
\end{equation}
is an isomorphism.

We first study its Fredholmness and index by defining a pair \(M_0\) for which the operator \(\d^*_{M_0}\d_{M_0}\) is easier to understand. This will then be seen to differ form (the complexification of) \(\d^*_M\d_M\) by a compact operator.

To do so, we first observe that the bundles \(\Ad(P)_C^\CC\) and \(\Ad(P)_{C^\perp}^\CC\) are trivial away from the origin, as follows from \eqref{eq-b-and-sc-subbundles-complex} and \eqref{eq-root-subbundle-degree}, so they can be extended as subbundles of \(\Ad(P)^\CC\) over \(\RR^3\). Hence, since the model pair \(M_\mc=\APhisub{\mc}\) respects the decomposition near infinity, we can modify the connection \(A_\mc\) smoothly inside a compact set to obtain a unitary connection \(A_0\) which preserves this decomposition over the entire \(\RR^3\). We can also modify \(\Phi_\mc\) into a section \(\Phi_0\) whose adjoint action also preserves this decomposition over the entire \(\RR^3\) by cutting it off smoothly over a compact set. Note that the extension of the subbundle decomposition near the origin will not necessarily respect the properties of the adjoint action in the region near the origin where it has been extended, but this is not important if \(\Phi_0\) is identically \(0\) in this region. We call the new pair \(M_0=\APhisub{0}\).

With these properties, the operator \(\d^*_{M_0}\d_{M_0}\) decomposes along the subbundles \(\Ad(P)_C^\CC\) and \(\Ad(P)_{C^\perp}^\CC\), where it is a weighted elliptic b operator and a fully elliptic scattering operator, respectively. It is furthermore formally self-adjoint. This implies that the scattering part is Fredholm of index \(0\). For the b part we can compute the indicial roots, knowing that near infinity the operator is equal to \(\d^*_{M_\mc}\d_{M_\mc}\). The relevant weight in this case is \(0\), which does not coincide with an indicial root, and using the self-adjointness we deduce that this part is also Fredholm of index \(0\). Therefore we conclude that \(\d_{M_0}\d_{M_0}\) is Fredholm of index \(0\) from \((\scrH^{s,2}_0)^\CC\) to \((\scrH^{s,0}_0)^\CC\).

But \(M_\mc\) and \(M_0\) differ by a smooth compactly supported element, so \(\d^*_{M_\mc}\d_{M_\mc}\) differs from \(\d^*_{M_0}\d_{M_0}\) by a compact operator, and hence \(\d^*_{M_\mc}\d_{M_\mc}\) is also Fredholm of index \(0\). Additionally, since \(M-M_\mc\in\scrH^{s,1}\), we can apply the compactness properties of the first three multiplication maps in \cref{lem-multiplications} to deduce that the map \(\d^*_M\d_M-\d^*_{M_\mc}\d_{M_\mc}\) is also a compact operator. Therefore, \(\d^*_M\d_M\) is Fredholm of index \(0\) as well. Recall that this property is preserved when restricting to the real spaces \(\scrH^{s,2}_0\) and \(\scrH^{s,0}_0\).

Lastly, it is injective, because if \(u\in\scrH^{s,2}_0\) is such that \(\d^*_M\d_Mu=0\), then, using \cref{lem-integration-by-parts},
\begin{equation}
0=\langle\d^*_M\d_Mu,u\rangle_{L^2}=\langle\d_Mu,\d_Mu\rangle_{L^2}\,,
\end{equation}
and hence \(\d_Mu=0\). This implies that \(u\) is covariantly constant with respect to the connection of the pair \(M\). Since \(u\) must necessarily decay, this means that \(u\equiv0\).

Since the operator is Fredholm of index \(0\) and injective it must be an isomorphism, as required, completing the proof.
\end{proof}

\begin{corollary}
Let \(M\in\C_\mc^s\). Then, there exists a pair \(M_0\in\C_\mc^s\) which satisfies that \(M_0-M_\mc\) is smooth and compactly supported, and a gauge transformation \(g_0\in\G_\mc^s\), such that
\begin{equation}
\d^*_{M_0}(g_0\cdot M-M_0)=0\,.
\end{equation}
\end{corollary}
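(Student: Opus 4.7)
The plan is essentially a reprise of the implicit function theorem argument used in \cref{pro-coulomb-gauge}, but with the roles swapped: fix the monopole $M$ and vary the reference pair together with the gauge transformation. Specifically, I would apply the IFT to the smooth map
\begin{equation*}
\tilde F \colon \C_\mc^s \times \G_\mc^s \to \scrH^{s,0}_0, \qquad \tilde F(N,g) = \d^*_N(g \cdot M - N),
\end{equation*}
at the point $(N,g) = (M, 1_{\G_\mc^s})$, where $\tilde F$ vanishes. Its partial derivative in $g$ there is $-\d^*_M \d_M \colon \scrH^{s,2}_0 \to \scrH^{s,0}_0$, which is an isomorphism by exactly the argument given in the proof of \cref{pro-coulomb-gauge}. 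The IFT then produces a neighbourhood $V$ of $M$ in $\C_\mc^s$ and a smooth map $V \ni N \mapsto g(N) \in \G_\mc^s$ with $g(M) = 1_{\G_\mc^s}$, such that $\d^*_N(g(N) \cdot M - N) = 0$ for every $N \in V$.

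Next, I would invoke \cref{lem-smooth-compactly-supported-dense} applied to $M - M_\mc \in \scrH^{s,1}$ to obtain smooth compactly supported sections $\sigma$ arbitrarily close to $M - M_\mc$ in $\scrH^{s,1}$. The pairs $M_\mc + \sigma$ lie in $\C_\mc^s$, satisfy the required structural property that their difference with $M_\mc$ is smooth and compactly supported, and converge to $M$ in the topology of $\C_\mc^s$. Selecting any such $\sigma$ close enough to $M - M_\mc$ that $M_\mc + \sigma \in V$, I would set $M_0 \coloneqq M_\mc + \sigma$ and $g_0 \coloneqq g(M_0)$, yielding the required relation $\d^*_{M_0}(g_0 \cdot M - M_0) = 0$.

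The main technical point to verify is that $\tilde F$ is genuinely smooth and that its $g$-derivative at $(M, 1_{\G_\mc^s})$ equals $-\d^*_M \d_M$. Smoothness reduces to smoothness of the gauge action and of the bilinear operations appearing in $\d^*_N$, which all follow from the multiplication continuity results of \cref{lem-multiplications} in the same manner as for the smoothness of $\Bog_\mc^s$ established earlier. The $g$-derivative calculation is standard: differentiating $g(t) \cdot M$ at $g(0) = 1_{\G_\mc^s}$ with generator $X \in \GLie_\mc^s$ produces $-\d_M X$ by \cref{pro-formulas}, and composing with $\d^*_M$ gives the claimed expression. This poses no real obstacle beyond the bookkeeping already handled in \cref{pro-coulomb-gauge}; the only subtlety is the distinction between the fixed monopole and the varying reference pair, which is handled by the IFT variant described above rather than by reapplying \cref{pro-coulomb-gauge} verbatim.
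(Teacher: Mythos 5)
Your argument is correct, and it rests on the same two pillars as the paper's proof: the density of pairs whose difference from \(M_\mc\) is smooth and compactly supported (\cref{lem-smooth-compactly-supported-dense}), and the invertibility of \(\d^*_M\d_M\colon\scrH^{s,2}_0\to\scrH^{s,0}_0\) established inside \cref{pro-coulomb-gauge}. The route differs in one respect. The paper first rewrites the desired condition as \(\d^*_M(g_0\cdot M_0-M)=0\) by replacing \(g_0\) with its inverse, so that \cref{pro-coulomb-gauge} can be quoted verbatim with \(N=M_0\); you instead re-run the implicit function theorem for the map \((N,g)\mapsto\d^*_N(g\cdot M-N)\), treating the reference pair \(N\) as the parameter and solving for \(g\) near \((M,1_{\G_\mc^s})\). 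Your variant produces the stated condition \(\d^*_{M_0}(g_0\cdot M-M_0)=0\) directly, without the basepoint-swapping step -- a step which, as written in the paper, implicitly invokes the gauge covariance of the Coulomb operator and a change of the pair at which \(\d^*\) is based, and so deserves a little care. The price you pay is having to verify joint smoothness of \(\tilde F\) in \((N,g)\), including the dependence of \(\d^*_N\) on \(N\); as you note, this reduces to the continuity of the bilinear map \eqref{eq-multiplication-3} applied to \(\d^*_N-\d^*_{M_\mc}\), together with the smoothness of the gauge action already established for the moduli space setup. Both proofs are of essentially the same length and difficulty, and your computation of the partial \(g\)-derivative as \(-\d^*_M\d_M\) via \cref{pro-formulas} is exactly what is needed.
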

\begin{proof}
Firstly, by substituting \(g_0\) with its inverse, we can see that the condition is equivalent to
\begin{equation}
\d^*_M(g_0\cdot M_0-M)=0\,.
\end{equation}
Since, by \cref{lem-smooth-compactly-supported-dense} the set of pairs satisfying the desired regularity condition is dense in the configuration space, we can always pick such an \(M_0\) as close as we want to \(M\). Then, we can apply \cref{pro-coulomb-gauge} to obtain \(g_0\).
\end{proof}

We can now use the gauge fixing condition to obtain the desired regularity for our monopole.

\begin{proposition}\label{pro-regular-representative}
Let \(M\in\C_\mc^s\) be a monopole. Then, there exists a gauge transformation \(g\in\G_\mc^s\) such that
\begin{equation}
g\cdot M\in M_\mc+\scrB^{2,\infty}\,.
\end{equation}
\end{proposition}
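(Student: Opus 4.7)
The plan is to use the preceding corollary to put $M$ in a Coulomb gauge with respect to a reference pair $M_0$ that differs from $M_\mc$ only on a compact set, and then to bootstrap elliptic regularity and polyhomogeneity for the resulting nonlinear coupled system. Since $M_0$ coincides with $M_\mc$ outside a compact set, $M_0$ is bounded polyhomogeneous, so \cref{thm-linearised-operator-fredholm-index} applies to $\Dir_{M_0}$ and the b/sc parametrix and indicial-root data from its proof are available verbatim.

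Concretely, invoke the corollary to produce $M_0\in\C_\mc^s$ with $M_0-M_\mc$ smooth and compactly supported, together with $g_0\in\G_\mc^s$ such that $u:=g_0\cdot M-M_0\in\scrH^{s,1}$ satisfies $\d^*_{M_0}u=0$, and take $g:=g_0$. Because $g_0\cdot M$ is a monopole, expanding $\Bog$ at $M_0$ and combining with the Coulomb condition puts $u$ into a coupled elliptic system of the schematic form
\begin{equation*}
\Dir_{M_0}u=(\Bog(M_0),0)+Q(u,u),
\end{equation*}
where $Q$ is a fibrewise bilinear algebraic expression built from the Lie bracket (and Clifford action on the curvature piece), and the inhomogeneity $\Bog(M_0)$ is smooth and compactly supported because $M_\mc$ satisfies the Bogomolny equation near infinity.

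I would then run a two-stage bootstrap. First a regularity bootstrap: by \eqref{eq-multiplication-5}, $Q(u,u)\in\calH^{\frac{5}{4},\frac{5}{4},s,1}$ whenever $u\in\scrH^{s,1}$, so a b/sc parametrix for $\Dir_{M_0}$ returns a solution with one additional b derivative, and iterating gives $u$ with arbitrarily many b derivatives. Then a polyhomogeneity bootstrap driven by Kottke's expansion construction: on the $\Ad(P)_{C^\perp}$ component the scattering part of $\Dir_{M_0}$ is Fredholm at arbitrary weight and its kernel elements lie in $\scrB^\infty$, and since $Q(u,u)$ has strictly better decay than $u$ this component can be pushed to Schwartz decay by iteration. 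On the $\Ad(P)_C$ component, the b-indicial-root computation in the proof of \cref{thm-linearised-operator-fredholm-index} shows the smallest indicial root above the initial weight $0$ is $\lambda_1=1$, of order $1$, so one pass of the parametrix gives an expansion starting at $x^1$; the nonlinearity $Q(u,u)$ then lives at order $x^2$, and since no indicial root lies in $(1,2)$ a second pass yields an expansion at $x^2$. Combining the two components produces $u\in\scrB^{2,\infty}$.

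The main obstacle is the bookkeeping in the polyhomogeneity bootstrap: the nonlinearity couples the $\Ad(P)_C$ and $\Ad(P)_{C^\perp}$ subbundle components, so at each step one must verify that the quadratic terms have enough decay to feed back into the parametrix and, crucially, that the iterated right-hand sides never land on an indicial root of $\tD_{00}^{(1)}$, which would introduce logarithmic factors obstructing the order-$2$ conclusion. The weight gain recorded in \eqref{eq-multiplication-5} is calibrated precisely for this, and combined with the explicit placement of indicial roots depicted in \cref{fig-index-all} the iteration proceeds cleanly; nevertheless, verifying the details at each iteration constitutes the non-trivial content of the argument.
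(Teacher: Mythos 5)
Your proposal follows the paper's proof in all essentials: pass to Coulomb gauge with respect to a compactly supported perturbation \(M_0\) of \(M_\mc\) via the preceding corollary, combine the gauge condition with the Bogomolny equation to obtain the quadratic elliptic system \(\Dir_{M_0}m+\{m,m\}=v\) with \(v\) smooth and compactly supported, and bootstrap using the multiplication map \eqref{eq-multiplication-5} together with elliptic regularity for \(\Dir_{M_0}\) at non-indicial weights.

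The bookkeeping in the polyhomogeneity stage -- which you correctly identify as the crux -- is where your argument goes astray. First, the half-integers \(j+\tfrac{1}{2}\lvert i\alpha(\charge)\rvert\) with \(j\geq1\) and \(\alpha(\mass)=0\) \emph{are} indicial roots of \(\tD^{(1)}_{00}\), so whenever some such \(\lvert i\alpha(\charge)\rvert\) is odd there is an indicial root \(3/2\) in \((1,2)\); your stated justification (``no indicial root lies in \((1,2)\)'') is therefore false as written. It happens not to obstruct the conclusion, because under the shift \(\lambda\mapsto1+\lambda\) of \cref{lem-conditions-for-index-theorem} such a root contributes to the expansion only at order \(x^{5/2}\), which is admissible in \(\scrB^{2}\); likewise the leading b-expansion term sits at \(x^{1+\lambda_1}=x^{2}\), not at \(x^{1}\) as you claim. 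Second, your two-pass scheme leaves unresolved exactly the danger you name, namely that an iterated right-hand side lands on an indicial root. The paper's device for this is a single combined bootstrap that gains one b derivative and an \emph{irrational} weight increment \(0<\eta<\tfrac{1}{4}\) at each step, establishing \(m\in\calH^{j\eta,1+j\eta,s,1+j}+\scrB^{2,\infty}\) for all \(j\) via \eqref{eq-multiplication-5} and the regularity statement \eqref{eq-regularity}: since all indicial roots are half-integers, the weights \(1+j\eta\) never hit one, and letting \(j\to\infty\) removes the non-polyhomogeneous remainder on both subbundle components simultaneously. With that replacement for your two-stage iteration, your argument coincides with the paper's.
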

\begin{proof}
Let \(M_0=\APhisub{0}\) be a pair obtained from the previous corollary applied to \(M\). After applying a gauge transformation to \(M\) (which we omit for simplicity of notation), we have that
\begin{equation}\label{eq-coulomb-gauge-for-regularity}
\d^*_{M_0}(M-M_0)=0\,.
\end{equation}

Since \(M\) is a monopole, we know that \(\Bog(M)=0\). Additionally, since \(M_0-M_\mc\) is smooth and compactly supported, so is \(\Bog(M_0)\). Hence, the same can be said of
\begin{equation}
\Bog(M)-\Bog(M_0)=\hs\d_{A_0} a+\ad_{\Phi_0}a-\d_{A_0}\phi+\frac{1}{2}\hs[a\wedge a]-[a,\phi]\,,
\end{equation}
where \(\aphi=M-M_0\). Combining this with the gauge fixing condition \eqref{eq-coulomb-gauge-for-regularity}, and writing \(m=\aphi\), we have
\begin{equation}\label{eq-non-linear-equation}
\Dir_{M_0}m+\{m,m\}=v\,,
\end{equation}
where \(\{\ph,\ph\}\) is a fibrewise bilinear product between the appropriate spaces and \(v\) is smooth and compactly supported. Note that this fibrewise product is bounded above and below and uses the Lie bracket to multiply the factors in the adjoint bundle.

The crucial fact to obtain the desired regularity is that
\begin{equation}\label{eq-regularity}
\Dir_{M_0}m\in\calH^{\delta_0,\delta_1,s,k}+\scrB^{4,\infty}\implies m\in\calH^{\delta_0-1,\delta_1,s,k+1}+\scrB^{2,\infty}
\end{equation}
when the weight \(\delta_0\) does not correspond to any indicial root of the operator. We can see that this is essentially an elliptic regularity result adapted to our specific framework. It can be deduced from Kottke's work and more general analytical results from the b and scattering calculuses. In particular, note that \(\Dir_{M_0}\) has no off-diagonal terms, simplifying the computations.

We can use this to carry out a bootstrapping argument and obtain the desired regularity. In particular, we prove that, for every \(j\in\ZZ_{\geq0}\),
\begin{equation}
m\in\calH^{j\eta,1+j\eta,s,1+j}+\scrB^{2,\infty}\,,
\end{equation}
where \(0<\eta<\frac{1}{4}\) is some fixed irrational number. We can see that, if this is true for every \(j\), then \(m\) must be in \(\scrB^{2,\infty}\), as desired.

Now, the case \(j=0\) is simply the condition \(m\in\scrH^{s,1}\). The rest will be proven by induction.

The first induction step involves the fact that
\begin{equation}
m\in\calH^{0,1,s,1}+\scrB^{2,\infty}\implies\{m,m\}\in\calH^{1+\eta,1+\eta,s,1}+\scrB^{4,\infty}\,,
\end{equation}
which follows from the continuity of the map \eqref{eq-multiplication-5}. Then, from \eqref{eq-non-linear-equation} and \eqref{eq-regularity} we deduce
\begin{equation}
m\in\calH^{\eta,1+\eta,s,2}+\scrB^{2,\infty}\,.
\end{equation}
Here, the irrationality of \(\eta\) allows us to avoid indicial roots when applying \eqref{eq-regularity}, since, as we observed in the proof of \cref{thm-linearised-operator-fredholm-index}, the indicial roots are always half-integers.

The remaining induction steps follow similarly, since the same multiplication property also implies
\begin{equation}
m\in\calH^{j\eta,1+j\eta,s,j+1}+\scrB^{2,\infty}\implies\{m,m\}\in\calH^{1+(j+1)\eta,1+(j+1)\eta,s,j+1}+\scrB^{4,\infty}
\end{equation}
(see \cref{rmk-sobolev-embeddings}), allowing us to apply \eqref{eq-non-linear-equation} and \eqref{eq-regularity} once more to obtain the needed expression.
\end{proof}

\subsection{Smoothness}

If we hope to model the moduli space near a monopole by looking at a small slice near a representative, we also need to know that sufficiently close orbits will only intersect this slice once. The following lemma and its corollary allow us to do that.

\begin{lemma}\label{lem-connections-control-gauge-transformations}
Let \(\{M_j\}\) and \(\{M_j'\}\) be sequences of configuration pairs in \(\C_\mc^s\), and let \(\{g_j\}\) be a sequence of gauge transformations in \(\G_\mc^s\) such that \(g_j\cdot M_j=M_j'\) for all \(j\). If the sequences of configuration pairs have limits \(M_\infty\) and \(M_\infty'\), respectively, in \(\C_\mc^s\), then the sequence of gauge transformations will have a limit \(g_\infty\) in \(\G_\mc^s\) such that \(g_\infty\cdot M_\infty=M_\infty'\).
\end{lemma}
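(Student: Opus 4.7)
The plan is to recast $g_j\cdot M_j=M_j'$ as a first-order elliptic equation for $h_j\coloneqq g_j-\underline{1_G}\in\scrH^{s,2}(E^{\operatorname{Mat}})$, extract a subsequence converging in $\scrH^{s,2}(E^{\operatorname{Mat}})$, and then use a uniqueness argument to promote subsequential convergence to convergence of the whole sequence. Writing $A_j=A_\mc+a_j$ and $\Phi_j=\Phi_\mc+\phi_j$ and expanding the gauge action on $E^{\operatorname{Mat}}$ yields the coupled system
\begin{align*}
\d_{A_\mc}h_j&=-(a_j'-a_j)(\underline{1_G}+h_j)-[a_j,h_j]\,,\\
[\Phi_\mc,h_j]&=-(\phi_j'-\phi_j)(\underline{1_G}+h_j)-[\phi_j,h_j]\,,
\end{align*}
where $\d_{A_\mc}$ denotes the covariant derivative on $E^{\operatorname{Mat}}$ induced by $A_\mc$ via the conjugation action and all brackets are matrix commutators. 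Denoting the pair of operators on the left by $\d^{\operatorname{Mat}}_{M_\mc}$, the right-hand side is affine in $h_j$, with inhomogeneous part controlled by the convergent $M_j'-M_j\to M_\infty'-M_\infty$ in $\scrH^{s,1}$ and coefficients that stay bounded as $j\to\infty$.

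The splittings \eqref{eq-b-and-sc-subbundles-real}--\eqref{eq-b-and-sc-subbundles-complex} extend verbatim to $E^{\operatorname{Mat}}$, so the Fredholm-and-injectivity analysis of \cref{pro-coulomb-gauge} carries over to show that $(\d_{M_\mc}^{\operatorname{Mat}})^{*}\d_{M_\mc}^{\operatorname{Mat}}\colon\scrH^{s,2}(E^{\operatorname{Mat}})\to\scrH^{s,0}(E^{\operatorname{Mat}})$ is an isomorphism. Combined with the multiplication estimates of \cref{lem-multiplications} (transported to $E^{\operatorname{Mat}}$), this yields a uniform bound $\|h_j\|_{\scrH^{s,2}(E^{\operatorname{Mat}})}\le C$; should the direct estimate be obstructed by the quadratic terms, a contradiction-and-rescaling argument applied to $\tilde h_j=h_j/\|h_j\|_{\scrH^{s,2}}$ produces a nontrivial covariantly-parallel section of $E^{\operatorname{Mat}}$ decaying at infinity, contradicting the injectivity just established. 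With the uniform bound in hand, the compact embeddings from \cref{lem-sobolev-embeddings} and the compactness properties of the multiplications in \cref{lem-multiplications} let us extract a subsequence $h_{j_k}$ converging strongly in $\scrH^{s,2}(E^{\operatorname{Mat}})$ to some $h_\infty$. Setting $g_\infty\coloneqq\underline{1_G}+h_\infty$ and passing to the limit in the equations gives $g_\infty\cdot M_\infty=M_\infty'$, and the Sobolev embedding of $\scrH^{s,2}$ into continuous sections (valid because $s\ge1$) forces $g_\infty$ to take values in $\Aut(P)$, so $g_\infty\in\G_\mc^s$.

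To promote this to convergence of the full sequence, it suffices to show that the limit is unique. If $g,g'\in\G_\mc^s$ both send $M_\infty$ to $M_\infty'$, then $k\coloneqq g^{-1}g'$ stabilises $M_\infty$, so $h\coloneqq k-\underline{1_G}\in\scrH^{s,2}(E^{\operatorname{Mat}})$ satisfies $\d_{A_\infty}h=0$ and $[\Phi_\infty,h]=0$. Since the induced connection on $E^{\operatorname{Mat}}$ is compatible with the fibrewise Killing metric and $\underline{1_G}$ is covariantly constant, $|h|$ is preserved by parallel transport along the connection; as $h$ decays pointwise at infinity by Sobolev embedding, this forces $h\equiv0$ and hence $g=g'$. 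Every subsequential limit therefore coincides with the same $g_\infty$, and the whole sequence converges to $g_\infty$ in $\G_\mc^s$. The principal technical obstacle is the uniform $\scrH^{s,2}$-bound on $h_j$: adapting the hybrid Fredholm theory from $\Ad(P)$ to the larger bundle $E^{\operatorname{Mat}}$ and absorbing the quadratic couplings in the right-hand side through the multiplication estimates requires careful bookkeeping of the weights, though all needed ingredients are in place from the preceding sections.
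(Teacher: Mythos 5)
Your starting point -- rewriting \(g_j\cdot M_j=M_j'\) as the affine equation \(\d^{\operatorname{Mat}}_{M_\mc}h_j=(m_j-m_j')+h_jm_j-m_j'h_j\) for \(h_j=g_j-\underline{1_G}\) -- is exactly the paper's (its equation \(\d_{M_\mc}g_j=g_jm_j-m_j'g_j\)), and your uniqueness argument for promoting subsequential to full convergence is sound. But the routes diverge at the crux, the uniform bound on \(h_j\). The paper follows Lemma 4.2.4 of Donaldson--Kronheimer: the initial uniform bound is the \emph{pointwise} \(L^\infty\) bound on \(g_j\), which is free because \(g_j\) takes values in the compact fibres of \(\Aut(P)\subset E^{\operatorname{Mat}}\); the equation is then used to bootstrap this to weighted \(L^6\) and \(L^3\) bounds (which supply the decay at infinity that the \(L^\infty\) bound does not), then to Sobolev bounds, a weak limit, and finally strong \(\scrH^{s,2}\) convergence by a second bootstrap. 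No Fredholm theory on \(E^{\operatorname{Mat}}\) and no rescaling is needed. Your route replaces this with an isomorphism statement for \((\d^{\operatorname{Mat}}_{M_\mc})^*\d^{\operatorname{Mat}}_{M_\mc}\) plus a blow-up argument, and never uses the one fact that makes the direct argument work.

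That substitution can be made to work, but the step you defer is precisely where the content lies, and as written it has two genuine soft spots. First, a rescaling argument only yields a contradiction if the weak limit of \(\tilde h_j=h_j/\lVert h_j\rVert\) is \emph{nonzero}; for that you must observe that \(h\mapsto(\d^{\operatorname{Mat}}_{M_\mc})^*(hm_\infty-m_\infty'h)\) is compact from \(\scrH^{s,2}\) to \(\scrH^{s,0}\) (via the compactness in the first factor of the multiplications in \cref{lem-multiplications}), so that the right-hand side converges strongly and the normalisation \(\lVert\tilde h_j\rVert=1\) survives the limit. Without this the normalised sequence could converge weakly to zero and no contradiction arises. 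Second, the limit you obtain satisfies \(\d_{A_\mc}\tilde h_\infty=\tilde h_\infty a_\infty-a_\infty'\tilde h_\infty\) (and the analogous Higgs-field relation), i.e.\ it is a covariantly constant \emph{intertwiner} between \(M_\infty\) and \(M_\infty'\), not an element of the kernel of \(\d^{\operatorname{Mat}}_{M_\mc}\); it is therefore not ruled out by ``the injectivity just established'' but by the separate (easy) observation that a decaying section parallel for the metric connection the pair \((M_\infty,M_\infty')\) induces on the Hom-type bundle has constant norm and hence vanishes. Relatedly, the claim that the Fredholm analysis of \cref{pro-coulomb-gauge} extends ``verbatim'' to \(E^{\operatorname{Mat}}\) glosses over the fact that the eigenbundle decomposition under \(\ad_{\underline\mass}\) and the indicial roots are now governed by the weights of the chosen matrix representation rather than the roots of \(\g^\CC\); nothing goes wrong, but it must be checked. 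If you patch these points your proof is a valid, if heavier, alternative; the paper's observation that compactness of \(G\) hands you the \(L^\infty\) bound for free is what lets it avoid all of this machinery.
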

\begin{proof}
Once again, we consider gauge transformations as sections of a vector bundle.

Let \(M_j=M_\mc+m_j\) and \(M_j'=M_\mc+m_j'\) for all \(j\) (including \(\infty\)). Then, the condition \(g_j\cdot M_j=M_j'\) is equivalent to
\begin{equation}\label{eq-ccgt}
\d_{M_\mc}g_j=g_jm_j-m_j'g_j\,.
\end{equation}

The proof then proceeds similarly to that of Lemma 4.2.4 in Donaldson's and Kronheimer's book \cite{DK90}, although we must modify the first few steps to ensure we have the appropriate asymptotic conditions.

At each step, we start by knowing that \(m_j\) and \(m_j'\) are uniformly bounded in the norm of \(\scrH^{s,1}\), and that \(g_j-1_{\G_\mc^s}\) is uniformly bounded in some other norm (initially, the \(L^\infty\) norm). We then use \eqref{eq-ccgt} to obtain a uniform bound on a better norm for \(g_j-1_{\G_\mc^s}\). We firstly obtain bounds on weighted \(L^6\) and \(L^3\) norms, and afterwards on a Sobolev norm.

This implies that there is a weak limit \(g_\infty\), which must satisfy the equation
\begin{equation}\label{eq-ccgt-infty}
\d_{M_\mc}g_\infty=g_\infty m_\infty-m_\infty'g_\infty\,.
\end{equation}

We can then prove that \(g_\infty\) is in \(\G_\mc^s\) and is in fact the strong limit of the sequence through a bootstrapping argument using \eqref{eq-ccgt} and \eqref{eq-ccgt-infty}: we start in the same way as before, and then continue until we obtain the appropriate bounds on the norm of \(\scrH^{s,2}_0\). Note that to start the bootstrapping argument for the strong convergence we rely on the fact that \(\scrH^{s,2}\subsetc L^\infty\).
\end{proof}

\begin{corollary}\label{cor-coulomb-gauge-unique}
\Cref{pro-coulomb-gauge} still holds if the gauge transformation \(g\) is required to be unique in the entire group \(\G_\mc^s\) (with possibly different choices of neighbourhoods).
\end{corollary}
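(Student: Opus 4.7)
The plan is to argue by contradiction. Suppose the strengthened uniqueness fails. Then along any sequence of nested neighbourhoods of \(M\) shrinking to \(\{M\}\) I can extract \(N_j\in\C_\mc^s\) with \(N_j\to M\) together with two distinct sequences \(g_j,g_j'\in\G_\mc^s\), both solving \eqref{eq-coulomb-gauge}, and with \(g_j\cdot N_j,g_j'\cdot N_j\to M\). Setting \(M_j=g_j\cdot N_j\), \(M_j'=g_j'\cdot N_j\) and \(h_j=g_j'g_j^{-1}\), I have \(h_j\cdot M_j=M_j'\) and \(M_j,M_j'\to M\) in \(\C_\mc^s\).

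The first step is to show that \(h_j\) eventually lies outside a fixed neighbourhood of the identity in \(\G_\mc^s\). For \(j\) large enough \(M_j\) lies in the neighbourhood \(U'\) supplied by \cref{pro-coulomb-gauge} (applied to \(M\) itself, with some fixed pair \(U,U'\) provided by that proposition), and both \(g=1\) and \(g=h_j\) satisfy the Coulomb equation \(\d^*_M(g\cdot M_j-M)=0\) with image in \(U\); the local uniqueness clause in \cref{pro-coulomb-gauge} therefore forces \(h_j=1\) whenever \(h_j\) is close enough to the identity, contradicting \(g_j\neq g_j'\). I then apply \cref{lem-connections-control-gauge-transformations} to the convergent sequences \(M_j\to M\) and \(M_j'\to M\), together with the intertwining gauge transformations \(h_j\), extracting a limit \(h_\infty\in\G_\mc^s\) satisfying \(h_\infty\cdot M=M\). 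By continuity of the Sobolev norm, \(h_\infty\) is itself bounded away from the identity.

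The main obstacle is the last step: deriving a contradiction by showing \(h_\infty=\underline{1_G}\), i.e.\ that the stabiliser of any configuration pair inside \(\G_\mc^s\) is trivial. Since \(h_\infty\) fixes \(M=\APhi\), viewed as a section of \(\Aut(P)\) it is covariantly constant with respect to the connection induced by \(A\) and commutes fibrewise with \(\Phi\). Moreover \(h_\infty-\underline{1_G}\in\scrH^{s,2}(E^{\operatorname{Mat}})\) embeds continuously into \(L^\infty\), so \(h_\infty(p)\to 1_G\) as \(p\to\infty\). Parallel transport on \(\Aut(P)\) preserves the constant section \(\underline{1_G}\), and a covariantly constant section is determined by its value at any single point by parallel transport along a path; choosing \(p_k\to\infty\) with \(h_\infty(p_k)\to 1_G\) and transporting to an arbitrary \(q\in\RR^3\) then forces \(h_\infty(q)=1_G\). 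Hence \(h_\infty\equiv\underline{1_G}\), yielding the desired contradiction and completing the argument.
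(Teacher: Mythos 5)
Your argument is correct and follows essentially the same route as the paper: negate uniqueness to get sequences converging to \(M\) intertwined by gauge transformations bounded away from the identity (via the local uniqueness in \cref{pro-coulomb-gauge}), pass to a limit with \cref{lem-connections-control-gauge-transformations}, and contradict the triviality of the stabiliser. The only difference is that you spell out the final step -- that a covariantly constant element of \(\G_\mc^s\) decaying to the identity at infinity must be the identity -- which the paper compresses into the single sentence that this is ``not possible given the asymptotic conditions''.
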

\begin{proof}
Let us suppose, on the contrary, that we have a sequence \(\{N_j\}\) of configuration pairs in \(\C_\mc^s\) and a sequence \(\{g_j\}\) of gauge transformations in \(\G_\mc^s\) such that the sequences \(\{N_j\}\) and \(\{g_j\cdot N_j\}\) tend to \(M\) and are in Coulomb gauge with respect to it. By \cref{pro-coulomb-gauge}, the sequence \(\{g_j\}\) must eventually be bounded away from the identity. Then, we can deduce from \cref{lem-connections-control-gauge-transformations} that there exists a \(g_\infty\in\G_\mc^s\setminus\{1_{\G_\mc^s}\}\) such that \(\d_Mg_\infty=0\). This is not possible given the asymptotic conditions on \(\G_\mc^s\).
\end{proof}

We now have all the necessary elements to prove that our moduli space, constructed as a quotient, is smooth.

\begin{proposition}\label{pro-smoothness}
The quotient \((\Bog_\mc^s)^{-1}(0)/\G_\mc^s\) is either empty or a smooth manifold of dimension given by the index of the linearised operator \eqref{eq-index}.
\end{proposition}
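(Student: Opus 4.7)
The plan is to build smooth charts on the quotient at each point $[M] \in (\Bog_\mc^s)^{-1}(0)/\G_\mc^s$ modelled on the finite-dimensional kernel of the linearised Dirac operator. First, assuming the quotient is non-empty, pick a representative $M$ and use \cref{pro-regular-representative} to replace it by a gauge-equivalent monopole $M_0$ with $M_0 - M_\mc \in \scrB^{2,\infty}$, so that the Fredholmness and surjectivity results of \cref{sec-linearised} apply to $\Dir_{M_0}$.

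Next, I would construct a slice for the gauge action through $M_0$ inside the space of monopoles by combining the Bogomolny equation with the Coulomb gauge condition. Concretely, consider the smooth map
\begin{equation*}
F \colon \C_\mc^s \to \scrH^{s,0}_1 \oplus \scrH^{s,0}_0, \qquad N \mapsto \bigl(\Bog_\mc^s(N),\, \d^*_{M_0}(N - M_0)\bigr).
\end{equation*}
Its differential at $N = M_0$ sends $m \in \scrH^{s,1}$ to $\bigl((\d\Bog)_{M_0}(m),\, \d^*_{M_0} m\bigr)$, which, up to signs, coincides with the linearised Dirac operator $\Dir_{M_0}$ studied in \cref{sec-linearised}. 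By \cref{thm-linearised-operator-fredholm-index} and the subsequent surjectivity proposition, this is Fredholm and surjective, with finite-dimensional kernel of dimension given by \eqref{eq-index}. The Banach-space implicit function theorem then shows that $F^{-1}(0)$ is, in some neighbourhood $V$ of $M_0$, a smooth finite-dimensional submanifold of $\C_\mc^s$ of the expected dimension, with tangent space at $M_0$ equal to $\ker \Dir_{M_0}$.

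Then I would verify that $V \cap F^{-1}(0)$ descends to a chart on the quotient. \Cref{pro-coulomb-gauge} provides, for any configuration pair close to $M_0$, a small gauge transformation putting it in Coulomb gauge relative to $M_0$; applied to nearby monopoles, this yields a continuous surjection from $V \cap F^{-1}(0)$ onto a neighbourhood of $[M_0]$. \Cref{cor-coulomb-gauge-unique} upgrades the uniqueness of the Coulomb gauge from a neighbourhood of the identity in $\G_\mc^s$ to the whole group, so the surjection is also injective, hence a homeomorphism. For transition maps between charts built at two distinct regular representatives $M_0$ and $M_0'$, the passage from one slice to the other is implemented by the unique small gauge transformation from \cref{pro-coulomb-gauge}, whose smooth dependence on the base point follows from the parametric implicit function theorem; combined with the smoothness of the gauge action on $\C_\mc^s$, this makes the transition smooth.

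The main technical obstacle is that the Fredholm and surjectivity theorems for $\Dir_M$ require $M$ to be bounded polyhomogeneous (in $M_\mc + \scrB^{2,\infty}$), a condition that a generic monopole in $\C_\mc^s$ need not satisfy; the chart must therefore be built at a regular representative $M_0$ supplied by \cref{pro-regular-representative} and then transported to $[M]$ via the gauge equivalence. A secondary delicate point is the promotion of Coulomb-gauge uniqueness from local to global in $\G_\mc^s$, since without it distinct points of the slice could accidentally lie on the same orbit of a large gauge transformation; this is exactly the content of \cref{cor-coulomb-gauge-unique} and is what makes the chart injective. Once these two ingredients are in place, the rest of the argument is a standard assembly along the lines of the Donaldson--Kronheimer treatment of anti-self-dual moduli spaces.
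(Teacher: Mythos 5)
Your proposal follows essentially the same route as the paper's own proof: regularise the representative via \cref{pro-regular-representative}, build the slice as the zero set of the map \(N\mapsto(\Bog_\mc^s(N),\d^*_{M_0}(N-M_0))\) using the Fredholmness and surjectivity of \(\Dir_{M_0}\) together with the implicit function theorem, and use \cref{pro-coulomb-gauge} and \cref{cor-coulomb-gauge-unique} to see that the slice parametrises nearby orbits bijectively. The one step you omit is the verification that the quotient topology is Hausdorff, which is part of what it means to be a smooth manifold and is not delivered by the chart construction alone: two distinct orbits could a priori fail to be separated by open sets even though each admits a slice chart. The paper handles this with \cref{lem-connections-control-gauge-transformations} -- if two monopoles are limits of pairwise gauge-equivalent sequences, the gauge transformations themselves converge, so the limits are gauge equivalent and non-separated points in the quotient must coincide. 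Since you already invoke that lemma's corollary for injectivity of the chart, this is a small addition rather than a structural flaw; with it included, your argument matches the paper's. (Your treatment of transition maps via the parametric implicit function theorem is a slightly more hands-on alternative to the paper's appeal to uniqueness of the smooth structure making the projection a submersion; both work.)
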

\begin{proof}
Firstly, we observe that \((\Bog_\mc^s)^{-1}(0)\) is a submanifold of \(\C_\mc^s\), since the Bogomolny map is a submersion. Hence, we want to give the quotient by \(\G_\mc^s\) a smooth structure such that the projection map from \((\Bog_\mc^s)^{-1}(0)\) is a smooth submersion. Note that such a smooth structure must be unique.

We firstly observe that \cref{lem-connections-control-gauge-transformations} implies that the quotient is Hausdorff. Indeed, if \([M]\) and \([M']\) are points in the quotient, and they don't have disjoint open neighbourhoods, we can construct a sequence of points in the quotient which gets arbitrary close to both points. This means that, in the configuration space, these can be lifted to two sequences, \(\{M_j\}\) and \(\{M_j'\}\) which tend to \(M\) and \(M'\), respectively, and are pairwise gauge equivalent. But then the limits must also be gauge equivalent.

Now let us take a point in this quotient, represented by a monopole \(M\in\C_\mc^s\). By \cref{pro-regular-representative}, we may assume that it is bounded polyhomogeneous. Consider the function
\begin{equation}
\begin{alignedat}{1}
f\colon\C_\mc^s&\to\scrH^{s,0}\\
N&\mapsto(\Bog_\mc^s(N),d^*_M(N-M))\,.
\end{alignedat}
\end{equation}
Its derivative is precisely the operator \(\Dir_M\) that we studied in the previous section, which is Fredholm and surjective, so applying the implicit function theorem allows us to identify a small ball in its kernel (which is a finite dimensional vector space of known dimension) with the set of monopoles near \(M\) which are in Coulomb gauge with respect to it. But, by \cref{cor-coulomb-gauge-unique}, all monopoles in an orbit close enough to the orbit of \(M\) must have a unique such representative, so this gives us a chart.

The smoothness of the transition functions follows from the uniqueness of the quotient smooth structure.
\end{proof}

Note that our definition of moduli space was, a priori, dependent on a regularity parameter \(s\). However, we can see that for any two choices of this parameter, the resulting quotients are naturally diffeomorphic.

\begin{proposition}\label{pro-independence-of-s}
The smooth structure on the moduli space \(\fMod_\mc^s\) does not depend on the choice of \(s\).
\end{proposition}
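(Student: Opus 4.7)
The plan is to show that for any $s \leq s'$ the obvious inclusions $\scrH^{s',k} \hookrightarrow \scrH^{s,k}$ induce a natural map $F_{s,s'}\colon \fMod_\mc^{s'} \to \fMod_\mc^s$, and that this map is a diffeomorphism. Since the smooth structures are characterised uniquely by the requirement that projection from the space of monopoles be a smooth submersion (as noted in the proof of \cref{pro-smoothness}), it suffices to prove that $F_{s,s'}$ is a homeomorphism whose lifts to local charts are smooth with smooth inverses.

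For surjectivity of $F_{s,s'}$, I would invoke \cref{pro-regular-representative}: every class $[M]\in\fMod_\mc^s$ contains a representative in $M_\mc + \scrB^{2,\infty}$. Since $\scrB^{2,\infty}$ is contained in $\scrH^{s',1}$ for every $s'$, such a representative defines a class in $\fMod_\mc^{s'}$ mapping to $[M]$. For injectivity, suppose $M_1, M_2 \in \C_\mc^{s'}$ are related by $g \in \G_\mc^s$. Writing $M_i = M_\mc + m_i$ with $m_i \in \scrH^{s',1}$, the equation
\begin{equation}
\d_{M_\mc} g = g m_1 - m_2 g
\end{equation}
is the same one exploited in \cref{lem-connections-control-gauge-transformations}. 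The right-hand side now enjoys the $s'$ level of b regularity thanks to the multiplicative properties from \cref{lem-multiplications} applied in the spaces with the extra derivatives, and a bootstrapping argument identical in structure to that of \cref{lem-connections-control-gauge-transformations} upgrades $g - 1_{\G_\mc^s}$ from $\scrH^{s,2}(E^{\operatorname{Mat}})$ to $\scrH^{s',2}(E^{\operatorname{Mat}})$, placing $g$ in $\G_\mc^{s'}$.

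For smoothness I would compare the charts produced in the proof of \cref{pro-smoothness} at each point. Pick a representative $M$ that is bounded polyhomogeneous, which exists in both settings by \cref{pro-regular-representative}. The chart around $[M]$ is modelled on $\ker(\Dir_M)$, and by \cref{thm-linearised-operator-fredholm-index} every element of this kernel lies in $\scrB^{2,3}$ independently of $s$. Thus the model spaces coincide as finite-dimensional vector spaces, and the slice constructed via the implicit function theorem for $\C_\mc^{s'}$ is simply the intersection of the slice for $\C_\mc^s$ with $\C_\mc^{s'}$. The transition of these charts under $F_{s,s'}$ is therefore the identity on the model space, which is trivially smooth. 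Combined with uniqueness of the quotient smooth structure, this shows $F_{s,s'}$ is a diffeomorphism.

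The main obstacle is the injectivity step, i.e.\ the bootstrapping that lifts the regularity of the intertwining gauge transformation from level $s$ to level $s'$. The subtlety is that the equation satisfied by $g$ is not elliptic but first order along $\d_{M_\mc}$, so one must carefully use the weighted multiplication estimates of \cref{lem-multiplications} and the embeddings of \cref{lem-sobolev-embeddings} (together with \cref{rmk-sobolev-embeddings}) to iteratively extract additional b derivatives. Once that is in hand, surjectivity and the compatibility of charts are essentially immediate consequences of \cref{pro-regular-representative} and \cref{thm-linearised-operator-fredholm-index}.
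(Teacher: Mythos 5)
Your proposal is correct and follows essentially the same route as the paper's own proof: surjectivity via the bounded polyhomogeneous representatives of \cref{pro-regular-representative}, injectivity via the bootstrapping argument of \cref{lem-connections-control-gauge-transformations} applied to the intertwining equation for \(g\), and agreement of the smooth structures via the coincidence of the slices modelled on \(\ker(\Dir_M)\). Your write-up is merely more explicit about the chart comparison than the paper's brief sketch.
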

\begin{proof}
Given that every monopole is gauge equivalent to a bounded polyhomogeneous one, the sets of monopoles are the same independently of \(s\). Furthermore, if two monopoles \(M_1,M_2\in\C_\mc^{s}\) are related by a gauge transformation \(g\in\G_\mc^{s'}\), for \(s'<s\), then we must have \(g\in\G_\mc^s\). This follows from the same bootstrapping argument as in the proof of \cref{lem-connections-control-gauge-transformations}. Therefore, the underlying set of the moduli space is independent of \(s\). Furthermore, the slice constructed in \cref{pro-smoothness} must be the same independently of \(s\), so the smooth structure is also the same.
\end{proof}

We will therefore simply refer to this moduli space, along with its smooth structure, as
\begin{equation}
\fMod_\mc\,.
\end{equation}

\subsection{The hyper-Kähler metric}

One of the benefits of constructing the moduli space of \emph{framed monopoles} is that we expect it to inherit a hyper-Kähler metric from the \(L^2\) inner product in the configuration space. This is because the moduli space construction can be viewed as an infinite-dimensional hyper-Kähler reduction. Throughout this subsection we can once again notice the analogy with connections on \(\RR^4\) mentioned in \cref{rmk-dimensional-reduction}.

To set up the hyper-Kähler reduction, we start by identifying the base manifold \(\RR^3\) with the imaginary quaternions. This provides a quaternionic structure on the bundle \(\Exterior^1\oplus\Exterior^0\) (which can be identified with \(\HH=\Im(\HH)\oplus\RR\)), compatible with the Euclidean metric. This, in turn, provides a quaternionic structure on the space \(\scrH^{s,1}\), since it is a space of sections of \((\Exterior^1\oplus\Exterior^0)\otimes\Ad(P)\). Furthermore, the \(L^2\) inner product, which is bounded by \cref{lem-integration-by-parts}, is compatible with it. This gives the configuration space \(\C_\mc^s\) the structure of a flat, infinite-dimensional hyper-Kähler manifold. Combining the analogous expressions for the quaternions with the \(L^2\) inner product we can write out this structure in the following terms.

\begin{proposition}\label{pro-symplectic-form}
The configuration space \(\C_\mc^s\) is a hyper-Kähler manifold with respect to the \(L^2\) metric. If \(\APhi\in\C_\mc^s\) and \({\aphisub{1},\aphisub{2}\in T_\APhi\C_\mc^s\iso\scrH^{s,1}}\), then
\begin{equation}
\begin{alignedat}{1}
\omega_\APhi\colon\scrH^{s,1}\times\scrH^{s,1}&\to\Im(\HH)\\
(\aphisub{1},\aphisub{2})&\mapsto\int_{\RR^3}\hs\langle a_1\wedge a_2\rangle_\g+\langle \phi_1,a_2\rangle_\g-\langle a_1,\phi_2\rangle_\g
\end{alignedat}
\end{equation}
defines the triple of symplectic forms.
\end{proposition}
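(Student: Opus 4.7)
The plan is to transfer the fibrewise quaternionic structure of $\HH$ to sections of $(\Exterior^1 \oplus \Exterior^0) \otimes \Ad(P)$, and from there to the affine configuration space $\C_\mc^s$, where flatness makes closedness of the triple of Kähler forms automatic.

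First, I would identify $\RR^3$ with $\Im(\HH)$ via the standard basis, and use the Euclidean metric to pass to $\Exterior^1(\RR^3) \cong \Im(\HH)$ pointwise. This gives a fibrewise identification $\Exterior^1 \oplus \Exterior^0 \cong \HH$ via $(a, \phi) \mapsto \phi + a$, where the $a$ on the right is regarded as an imaginary quaternion. Each fibre is then equipped with three anticommuting complex structures $I, J, K$ satisfying $IJ = K$, coming from unit quaternion multiplication. Since the Euclidean inner product corresponds to $\Re(\bar{v}w)$ on $\HH$, these complex structures are fibrewise isometries. Tensoring with $\Ad(P)$ (on which $I, J, K$ act trivially) and using the $\Ad$-invariant inner product on $\g$ extends this compatible quaternionic structure fibrewise to $(\Exterior^1 \oplus \Exterior^0) \otimes \Ad(P)$.

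Next, I would pass to sections. By \cref{lem-integration-by-parts}, the $L^2$ pairing on $\scrH^{s,1}$ is a well-defined continuous bilinear form, and the fibrewise actions of $I, J, K$ extend to bounded operators preserving it, so $\scrH^{s,1}$ becomes a quaternionic Hilbert space. Since $\C_\mc^s = \APhisub{\mc} + \scrH^{s,1}$ is an affine space modelled on this Hilbert space, the tangent space at every point is canonically identified with $\scrH^{s,1}$, and under this identification $g$, $I$, $J$, $K$ are all constant. Integrability of the three complex structures and closedness of the three Kähler 2-forms are then both automatic, so $\C_\mc^s$ is a (flat, infinite-dimensional) hyper-Kähler manifold.

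Finally, I would compute the explicit formula. For $v_1, v_2 \in \HH$, a suitable quaternion product (the one to single out is $v_2 \bar{v}_1$, or equivalently $-v_1 \bar{v}_2$) has real part equal to $\langle v_1, v_2 \rangle$ and imaginary part that packages the three pointwise symplectic forms as an element of $\Im(\HH)$. Using $\phi + a \mapsto v$ with $a \in \Im(\HH)$, the identity $ab = -a \cdot b + a \times b$ for imaginary quaternions, and the relation $\hs(a_1 \wedge a_2) = a_1 \times a_2$ on $\RR^3$, a direct pointwise calculation yields $\Im(v_2 \bar{v}_1) = \hs(a_1 \wedge a_2) + \phi_1 a_2 - a_1 \phi_2$. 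Applying the Killing form to the $\g$ factors and integrating component-wise against the Euclidean volume form produces the stated expression for $\omega_\APhi(\aphisub{1}, \aphisub{2})$. The main obstacle is bookkeeping — matching sign conventions for the quaternion product and Hodge star, and keeping track of the implicit identification between $\Exterior^1(\RR^3)$-valued functions and $\Im(\HH)$-valued functions — rather than any new analytic input, all of which is already supplied by the preceding lemmas.
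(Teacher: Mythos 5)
Your proposal is correct and follows essentially the same route as the paper: identify \(\Exterior^1\oplus\Exterior^0\) fibrewise with \(\HH\), note that the affine structure of \(\C_\mc^s\) makes the resulting quaternionic structure constant (hence flat hyper-Kähler), invoke \cref{lem-integration-by-parts} for continuity of the \(L^2\) pairing, and read off the triple of symplectic forms from the imaginary part of the quaternionic sesquilinear product \((\phi_1+a_1)\overline{(\phi_2+a_2)}\). The only nitpick is the phrase ``\(v_2\bar v_1\), or equivalently \(-v_1\bar v_2\)'': these two quaternions have the same imaginary part but opposite real parts, so the metric should be taken as \(\Re(v_1\bar v_2)\) rather than \(\Re(-v_1\bar v_2)\) — a sign-bookkeeping point you already flag, not a gap.
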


In the above expression, the fibrewise inner product \(\langle\ph,\ph\rangle_\g\) is given by the metric on the adjoint bundle, and \(\langle\ph\wedge\ph\rangle_\g\) combines this inner product with the wedge product of \(1\)-forms. The integrand is then a \(1\)-form over \(\RR^3\). But, as stated above, this is the same as an \(\Im(\HH)\)-valued function, which can be integrated with respect to the Euclidean measure to give an element of \(\Im(\HH)\).

\begin{proof}[Proof of \cref{pro-symplectic-form}]
The metric of a hyper-Kähler manifold can be viewed as the real part and the negative of the imaginary part of a bilinear form on the tangent bundle which is \(\HH\)-left-linear in the first argument and conjugate-symmetric.

For the space \(\HH\) such a bilinear form is given by the multiplication of a quaternion with the conjugate of another. Drawing a parallelism with our setting, suppose that \(a_1\) and \(a_2\) are imaginary quaternions, and \(\phi_1\) and \(\phi_2\) real numbers. Then, this bilinear form applied to \(\phi_1+a_1\) and \(\phi_2+a_2\) can be written as
\begin{equation}\label{eq-quaternion-multiplication}
(\phi_1+a_2)\overline{(\phi_2+a_2)}=(a_1\cdot a_2+\phi_1\phi_2)-(a_1\times a_2+\phi_1a_2-a_1\phi_2)\,,
\end{equation}
where the right-hand side is split into real an imaginary parts.

Consider now sections of \((\Exterior^1\oplus\Exterior^0)\otimes\Ad(P)\). As we saw, the bundle \(\Exterior^1\oplus\Exterior^0\) is trivial with fibres isomorphic to \(\HH\). Let us then multiply these sections by multiplying the component of \(\Exterior^1\oplus\Exterior^0\) fibrewise as in \eqref{eq-quaternion-multiplication}, the component of \(\Ad(P)\) using its metric, and then integrating over \(\RR^3\). If we write the resulting bilinear form in two parts as in \eqref{eq-quaternion-multiplication}, the first one corresponds to the \(L^2\) norm on \(\scrH^{s,1}\), whereas the second one corresponds to the formula we sought.
\end{proof}

It is easy to check that the gauge transformations respect this hyper-Kähler structure, but, as it turns out, the properties of the gauge action go far beyond this.

\begin{proposition}\label{pro-moment-map}
The group of gauge transformations \(\G_\mc^s\) acts on the configuration space \(\C_\mc^s\) through a tri-Hamiltonian action, and the moment map is given by the Bogomolny map \(\Bog_\mc^s\).
\end{proposition}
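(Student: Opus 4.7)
The plan is to verify two things: first, that the Bogomolny map, reinterpreted via the $L^2$ pairing and the identification $\Exterior^1\iso\Im(\HH)$, takes values in $\Im(\HH)\otimes(\GLie_\mc^s)^*$ and defines a smooth map there; and second, that it satisfies both the moment map identity $\d\langle\mu,X\rangle=\iota_{X^\#}\omega$ and the equivariance $\mu(g\cdot\APhi)=\Ad_g^*\mu(\APhi)$ for each of the three complex structures. Concretely, for $\APhi\in\C_\mc^s$ and $X\in\GLie_\mc^s=\scrH^{s,2}_0$, set
\begin{equation}
\langle\mu(\APhi),X\rangle\coloneqq\int_{\RR^3}\langle\Bog(\APhi),X\rangle_\g\,,
\end{equation}
where the integrand is a $1$-form on $\RR^3$, identified pointwise with an element of $\Im(\HH)$ via the trivialisation of $\Exterior^1\oplus\Exterior^0\iso\HH$. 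This pairing is well defined and continuous because $\Bog(\APhi)\in\scrH^{s,0}_1$ and $X\in\scrH^{s,2}_0$, so that \cref{lem-integration-by-parts} applies.

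For the moment map identity, I would use \cref{pro-formulas} to linearise both sides at $\APhi$ in a direction $\aphi\in T_\APhi\C_\mc^s$. On the left one obtains
\begin{equation}
\d\langle\mu,X\rangle_\APhi(a,\phi)=\int_{\RR^3}\langle\hs\d_Aa+\ad_\Phi a-\d_A\phi,X\rangle_\g\,.
\end{equation}
On the right, substituting $X^\#_\APhi=-(\d_AX,\ad_\Phi X)$ into the symplectic form of \cref{pro-symplectic-form} gives, after expanding,
\begin{equation}
\iota_{X^\#}\omega_\APhi(a,\phi)=-\int_{\RR^3}\hs\langle\d_AX\wedge a\rangle_\g-\int_{\RR^3}\langle\ad_\Phi X,a\rangle_\g+\int_{\RR^3}\langle\d_AX,\phi\rangle_\g\,.
\end{equation}
The two expressions are matched term by term: the $\hs\d_A a$ term and the $\hs\langle\d_AX\wedge a\rangle_\g$ term agree after integration by parts (using that $\hs$ and $\d_A$ give a divergence structure on $\RR^3$); the $\d_A\phi$ and $\d_AX$ terms are swapped by integration by parts; and the two $\ad$-terms are swapped by the $\Ad$-invariance of the Killing form. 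All integrations by parts take place between sections in $\scrH^{s,2}_0$ and $\scrH^{s,1}_{E}$ (or $\scrH^{s,0}_{E}$) and are justified by \cref{lem-integration-by-parts}. Since the computation respects the quaternionic splitting, it yields the moment map identity for each of the three symplectic forms simultaneously.

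Equivariance follows from the observation, recorded after the definition of $\Bog$, that $\Bog(g\cdot\APhi)=\Ad_g\Bog(\APhi)$ and that the fibrewise Killing form is $\Ad$-invariant, so pairing with $X$ and with $\Ad_g^{-1}X$ produces the same integral. Combined with the previous paragraph, this shows that $\mu=\Bog_\mc^s$ is a moment map for all three symplectic structures, and hence the $\G_\mc^s$-action is tri-Hamiltonian with $\Bog_\mc^s$ as the hyper-Kähler moment map. The only delicate point is bookkeeping the quaternionic identifications and ensuring that every integration by parts lies within the range of \cref{lem-integration-by-parts}; once these are set up correctly the identity reduces to the classical pointwise calculation for Yang--Mills on $\RR^4$ alluded to in \cref{rmk-dimensional-reduction}.
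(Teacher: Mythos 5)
Your proposal is correct and follows essentially the same route as the paper: pair the linearised Bogomolny map with \(X\), expand \(\omega_\APhi((\ia{X})_\APhi,\cdot)\) using \cref{pro-formulas} and \cref{pro-symplectic-form}, and identify the resulting terms via the skew-symmetry of \(\ad_\Phi\) and integration by parts justified by \cref{lem-integration-by-parts}. The only addition is your explicit equivariance check, which the paper handles implicitly through the \(\G\)-equivariance of \(\Bog\) noted in \cref{sec-monopoles}.
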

\begin{proof}
Consider the pairing
\begin{equation}
\scrH^{s,2}_0\times\scrH^{s,0}_1\to\Im\HH
\end{equation}
given by the \(L^2\) inner product using the metric on the adjoint bundle, which is continuous by \cref{lem-integration-by-parts}. Since the second space contains sections of the bundle \(\Exterior^1\otimes\Ad(P)\) rather than just \(\Ad(P)\), the pairing is valued in \(\Im\HH\), using the same identification as above. This means that we can write
\begin{equation}
\scrH^{s,0}_1\subseteq(\GLie_\mc^s)^*\otimes\Im(\HH)\,,
\end{equation}
where \((\GLie_\mc^s)^*\) denotes the space of continuous linear functionals on \(\GLie_\mc^s\). Note that the Bogomolny map takes values precisely in this space, as we would expect of a moment map.

Now, the condition for the action to be tri-Hamiltonian is
\begin{equation}
\langle\d(\Bog_\mc^s)_\APhi(a,\phi),X\rangle=\omega_\APhi((\ia{X})_\APhi,\aphi)\,,
\end{equation}
for all \(\APhi\in\C_\mc^s\), \(\aphi\in T_\APhi\C_\mc^s\) and \(X\in\GLie_\mc^s\). Combining the expression for the symplectic form from \cref{pro-symplectic-form} with the ones for the derivative of the Bogomolny map and the infinitesimal actions from \cref{pro-formulas}, we see that this condition is equivalent to
\begin{equation}
\begin{multlined}
\int_{\RR^3}\langle\hs\d_Aa,X\rangle_\g+\langle\ad_\Phi a,X\rangle_\g-\langle\d_A\phi,X\rangle_\g\\=\int_{\RR^3}-\hs\langle\d_AX\wedge a\rangle_\g-\langle \ad_\Phi X,a\rangle_\g+\langle\d_AX,\phi\rangle_\g\,,
\end{multlined}
\end{equation}
for all \(\APhi\in\C_\mc^s\), \(\aphi\in\scrH^{s,1}\) and \(X\in\scrH^{s,2}_0\). The middle summands are the same on both sides given the skew-symmetry of the adjoint action. By \cref{lem-integration-by-parts}, we can apply integration by parts to identify the first and third summands.
\end{proof}

Putting this together with the smoothness of the moduli space we complete the construction.

\begin{proposition}\label{pro-hyper-kahler-structure-on-moduli-space}
The \(L^2\) metric on \(\C_\mc^s\) descends to a hyper-Kähler metric on the moduli space \(\fMod_\mc\).
\end{proposition}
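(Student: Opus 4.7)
The plan is to realise this construction as an infinite-dimensional hyper-Kähler quotient and then verify that the standard Marsden--Weinstein--Meyer argument adapts to our Banach setting. The ingredients are already in place: \cref{pro-symplectic-form} gives \(\C_\mc^s\) a flat hyper-Kähler structure compatible with the \(L^2\) metric, \cref{pro-moment-map} exhibits \(\Bog_\mc^s\) as a tri-Hamiltonian moment map for the action of \(\G_\mc^s\), and \cref{pro-smoothness} provides the smooth quotient.

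First, I would identify the tangent space at a point \([M]\in\fMod_\mc\) represented by a bounded polyhomogeneous monopole \(M\) with
\begin{equation}
\ker(\Dir_M)=\ker(\d\Bog_M)\cap\ker(\d_M^*)\subset\scrH^{s,1}\,.
\end{equation}
This is exactly the slice used in the chart construction of \cref{pro-smoothness}. Using \cref{lem-integration-by-parts} and the surjectivity of \(\Dir_M\) established in \cref{sec-linearised}, I would check that \(\ker(\d_M^*)\) is the \(L^2\)-orthogonal complement of \(\operatorname{im}(\d_M)\) inside \(\scrH^{s,1}\), giving the \(L^2\)-orthogonal decomposition
\begin{equation}
T_M(\Bog_\mc^s)^{-1}(0)=\ker(\d\Bog_M)=\ker(\Dir_M)\oplus\operatorname{im}(\d_M)\,.
\end{equation}
This exhibits \(\ker(\Dir_M)\) as a horizontal slice for the gauge action inside the moment map zero set, so the \(L^2\) metric descends to a well-defined Riemannian metric on \(\fMod_\mc\).

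The crucial step is to show that \(\ker(\Dir_M)\) is closed under the three complex structures \(I,J,K\) on \(\scrH^{s,1}\) coming from the quaternionic identification \(\Exterior^1\oplus\Exterior^0\iso\HH\) used in \cref{pro-symplectic-form}. This is the standard hyper-Kähler reduction identity: for each imaginary unit \(I\), the tri-Hamiltonian property forces \(I\bigl(\operatorname{im}(\d_M)\bigr)\) to lie in the \(L^2\)-orthogonal complement of \(\ker(\d\Bog_M)\), so that \(I\) preserves the intersection \(\ker(\d\Bog_M)\cap\operatorname{im}(\d_M)^\perp=\ker(\Dir_M)\). Equivalently, one can verify directly that \(\Dir_M\) commutes with right multiplication by \(i,j,k\), since in the decomposition \((\Exterior^1\oplus\Exterior^0)^\CC\iso\Sb\otimes\Sb^*\) the Clifford action used to build \(\Dir_M\) acts only on the left factor \(\Sb\), while right quaternionic multiplication acts on \(\Sb^*\iso\underline{\CC^2}\).

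Once quaternionic invariance of the slice is established, the hyper-Kähler reduction argument runs as in the finite-dimensional case: the descended triple of \(2\)-forms is non-degenerate and closed on the quotient, the descended complex structures are integrable and parallel with respect to the descended Levi-Civita connection (both statements reducing to the corresponding facts on \(\C_\mc^s\) and the compatibility of horizontal projection with the quaternionic action), and together they provide the hyper-Kähler structure. The main obstacle I anticipate is ensuring that all of these \(L^2\)-orthogonality and integration-by-parts manipulations are rigorous on the Banach spaces \(\scrH^{s,k}\) rather than on an \(L^2\) completion; this is controlled by \cref{lem-integration-by-parts} together with the Fredholm and regularity results of \cref{sec-linearised}, which guarantee that every pairing and projection used in the reduction is finite and that the slice \(\ker(\Dir_M)\) lives in the correct function space.
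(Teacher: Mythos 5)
Your proposal is correct and follows essentially the same route as the paper: the paper's own proof simply observes that \cref{pro-moment-map} exhibits the construction as a hyper-Kähler reduction \(\C_\mc^s\ssslash\G_\mc^s\) and invokes the smoothness from \cref{pro-smoothness}, leaving implicit the standard quaternionic-invariance-of-the-slice argument that you spell out. The only point the paper makes explicitly that you leave implicit is that the metric, being the \(L^2\) norm on \(\ker(\Dir_M)\), is independent of the regularity parameter \(s\), so that it is well defined on \(\fMod_\mc\) itself.
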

\begin{proof}
\Cref{pro-moment-map} implies that our moduli space construction is ``formally'' a hyper-Kähler reduction, since we have
\begin{equation}
\fMod_\mc^s=(\Bog_\mc^s)^{-1}(0)/\G_\mc^s=\C_\mc^s\ssslash\G_\mc^s\,.
\end{equation}
Hence, given that this space is smooth, the quotient metric must be hyper-Kähler on \(\fMod_\mc^s\).

Furthermore, note that at each monopole \(M\) the metric is given by the \(L^2\) norm on the kernel of \(\Dir_M\), which is the tangent space to the moduli space and independent of \(s\) (when a suitable representative is chosen). This means that the metric is well defined on the moduli space \(\fMod_\mc\) independently of \(s\).
\end{proof}

\subsection{The moduli space}\label{subsec-the-moduli-space}

To simplify the dimension formula let us define \(R^+\) as a set of positive roots such that
\begin{equation}
\{\alpha\in\Roots\st\text{either }i\alpha(\mass)>0\text{, or }\alpha(\mass)=0\text{ and }i\alpha(\charge)<0\}\subseteq R^+\,,
\end{equation}
where the only ambiguity arises from roots such that \(\alpha(\mass)=\alpha(\charge)=0\).

Then, putting \cref{pro-smoothness,pro-independence-of-s,pro-hyper-kahler-structure-on-moduli-space} together we obtain the following.

\begin{theorem}\label{thm-main}
For any mass \(\mass\) and charge \(\charge\), the moduli space \(\fMod_\mc\) of framed monopoles is either empty or a smooth hyper-Kähler manifold of dimension
\begin{equation}
2\sum_{\alpha\in R^+}i\alpha(\charge)\,.
\end{equation}
\end{theorem}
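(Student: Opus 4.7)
The plan is to assemble this theorem as a direct corollary of the three main structural results of this section, together with a short combinatorial repackaging of the index formula. First I would invoke \cref{pro-smoothness} to obtain that $\fMod_\mc^s = (\Bog_\mc^s)^{-1}(0)/\G_\mc^s$ is either empty or a smooth manifold whose dimension is the index of the linearised operator given by \eqref{eq-index}. Then \cref{pro-independence-of-s} removes the dependence on the regularity parameter $s$, so the moduli space may simply be denoted by $\fMod_\mc$. Finally, \cref{pro-hyper-kahler-structure-on-moduli-space}, which interprets the construction as a formal infinite-dimensional hyper-Kähler reduction of the flat $L^2$ structure on $\C_\mc^s$ with moment map $\Bog_\mc^s$ (via \cref{pro-symplectic-form} and \cref{pro-moment-map}), promotes the smooth structure to a hyper-Kähler one.

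The only computational step remaining is to rewrite the index
\begin{equation*}
2\sum_{\substack{\alpha\in R\\i\alpha(\mass)>0}}i\alpha(\charge)-2\sum_{\substack{\alpha\in R\\\alpha(\mass)=0\\i\alpha(\charge)>0}}i\alpha(\charge)
\end{equation*}
in the compact form $2\sum_{\alpha\in R^+}i\alpha(\charge)$, using the set $R^+$ of positive roots fixed before the statement. Since $R = R^+ \sqcup (-R^+)$ and $R^+$ is required to contain every $\alpha$ with $i\alpha(\mass) > 0$ as well as every $\alpha$ with $\alpha(\mass) = 0$ and $i\alpha(\charge) < 0$, one partitions $R^+$ into three disjoint pieces according to whether $i\alpha(\mass) > 0$, or $\alpha(\mass) = 0$ with $i\alpha(\charge) < 0$, or $\alpha(\mass) = \alpha(\charge) = 0$. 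The third piece contributes zero, the first yields exactly the first sum of the index, and for the second the change of variable $\alpha \mapsto -\alpha$ turns $\sum i\alpha(\charge)$ (over roots with $\alpha(\mass) = 0$, $i\alpha(\charge) < 0$) into minus the sum over roots with $\alpha(\mass) = 0$, $i\alpha(\charge) > 0$. Adding these three contributions and multiplying by $2$ reproduces the expression in the theorem, and it also shows that the ambiguous choice for roots with $\alpha(\mass) = \alpha(\charge) = 0$ has no effect.

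There is no real obstacle here: all the genuine difficulty has already been absorbed into the Fredholm and index computation of \cref{thm-linearised-operator-fredholm-index}, the Coulomb slice theorem (\cref{pro-coulomb-gauge} and \cref{cor-coulomb-gauge-unique}), the regularity result \cref{pro-regular-representative}, and the hyper-Kähler reduction of \cref{pro-hyper-kahler-structure-on-moduli-space}. The value of the new formulation is mainly conceptual: it presents the dimension symmetrically over a choice of positive roots, rather than isolating the scattering-type contribution (from roots with $i\alpha(\mass) > 0$) and the b-type defect (from roots with $\alpha(\mass) = 0$) as in \eqref{eq-index}.
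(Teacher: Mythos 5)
Your proposal is correct and follows exactly the paper's route: the theorem is obtained by combining \cref{pro-smoothness}, \cref{pro-independence-of-s} and \cref{pro-hyper-kahler-structure-on-moduli-space}, plus the rewriting of the index \eqref{eq-index} as a sum over \(R^+\). Your explicit three-way partition of \(R^+\) (using \(\alpha\mapsto-\alpha\) for the roots with \(\alpha(\mass)=0\), \(i\alpha(\charge)<0\), and noting that roots with \(\alpha(\mass)=\alpha(\charge)=0\) contribute nothing) is the verification the paper leaves implicit, and it is carried out correctly.
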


Although there are differing conventions surrounding Lie algebras, we can check that the dimension of these moduli spaces coincides with the dimension formula given by Murray and Singer from the corresponding spaces of rational maps \cite{MS03}. Indeed, let us consider the fundamental weights associated the choice \(R^+\) of positive roots. If \(\alpha_1,\alpha_2,\dots,\alpha_{\rank(G)}\) are the simple roots of \(R^+\), we get corresponding fundamental weights \(w_1,w_2,\dots,w_{\rank(G)}\). In terms of these weights, our dimension formula becomes
\begin{equation}
\dim(\fMod_\mc)=4\sum_{j=1}^{\rank(G)}iw_j(\charge)\,,
\end{equation}
which coincides with Murray's and Singer's. The numbers
\begin{equation}
iw_1(\charge),iw_2(\charge),\dots,iw_{\rank(G)}(\charge)\in\ZZ
\end{equation}
are the \emph{charges}, which are called \emph{magnetic} when \(\alpha_j(\mass)\neq0\) and \emph{holomorphic} otherwise. Note that, as expected from the hyper-Kähler structure, this dimension is a multiple of \(4\).

In the case of \(G=\SU(2)\), of course, the only resulting integer from the above procedure is the usual charge, and our dimension computation yields four times this value, as expected.

For other groups, since our moduli spaces depend on fixing all the charges, in the case of non-maximal symmetry breaking they will correspond to the fibres of the strata in the stratified moduli space of monopoles sharing only the magnetic charges, as noted in \cref{subsec-background-and-overview}.

Let us illustrate this with the case of \(G=\SU(3)\) with non-maximal symmetry breaking, corresponding to
\begin{equation}
\mass=\dmat{-i&0&0\\0&-i&0\\0&0&2i}\,.
\end{equation}
Here, the symmetry breaks to the non-Abelian group
\begin{equation}
\operatorname{S}(\U(2)\times\U(1))<\SU(3)\,,
\end{equation}
which is isomorphic to \(\U(2)\).

The case in which the (only) magnetic charge is set to \(2\) has been studied in some detail \cite{Dan92,Dan93,BS98}. In it, the (only) holomorphic charge can be \(1\) or \(0\), corresponding to
\begin{equation}
\charge=\dmat{-i&0&0\\0&-i&0\\0&0&2i}\,,\quad\text{or}\quad\charge=\dmat{0&0&0\\0&-2i&0\\0&0&2i}\,,
\end{equation}
respectively. For the former choice, our construction yields a moduli space of dimension \(12\), whereas for the latter it produces a moduli space of dimension \(8\).

In the stratified moduli space picture, the \(12\)-dimensional space forms the open stratum, whereas the \(8\)-dimensional space provides the fibres of the lower \(10\)-dimensional stratum. Notice that the stabiliser of the mass also preserves the first choice of charge; in the second case, however, the stabiliser of the charge is the smaller group
\begin{equation}
\operatorname{S}(\U(1)\times\U(1)\times\U(1))<\operatorname{S}(\U(2)\times\U(1))\,,
\end{equation}
which is isomorphic to \(\U(1)^2\). This accounts for the \(2\)-dimensional base of the fibration,
\begin{equation}
\operatorname{S}(\U(2)\times\U(1))/\operatorname{S}(\U(1)\times\U(1)\times\U(1))\iso\U(2)/\U(1)^2\iso S^2\,.
\end{equation}
As it turns out, the monopoles in this case are essentially \(\SU(2)\)-monopoles embedded into the \(\SU(3)\) bundle, where the base of the fibration represents the possible embeddings (and hence framings).

\section*{Acknowledgements}\label{sec-ack}

This work is based on research carried out in the course of my PhD studies, and hence I want to acknowledge first of all the help and support of my PhD supervisor, Michael Singer, as well as of my second supervisor, Andrew Dancer, who have provided ideas, support, suggestions, corrections, and much more. I have also benefited from very useful conversations with other researchers, including Benoit Charbonneau, Lorenzo Foscolo, Derek Harland, Chris Kottke, Jason Lotay, Calum Ross and many (other) members of staff and students at the LSGNT and UCL. The reviewer likewise provided very helpful comments and suggestions. This work was supported by the Engineering and Physical Sciences Research Council [EP/L015234/1] -- the EPSRC Centre for Doctoral Training in Geometry and Number Theory at the Interface (the London School of Geometry and Number Theory), University College London.

\addbib

\end{document}